
\documentclass{amsart}
\usepackage{fullpage}
\usepackage{stmaryrd}
\usepackage{amssymb}
\usepackage{mathrsfs}




\newtheorem{theorem}{Theorem}[section]
\newtheorem{lemma}[theorem]{Lemma}
\newtheorem{cor}[theorem]{Corollary}

\newtheorem*{thmA}{Theorem A}
\newtheorem*{thmB}{Theorem B}

\theoremstyle{definition}
\newtheorem{definition}[theorem]{Definition}

\theoremstyle{remark}

\numberwithin{equation}{section}


\newcommand{\FF}{{\mathbb{F}}}

\newcommand{\bC}{{\mathbf{C}}}

\newcommand{\bZ}{{\mathbf{Z}}}
\newcommand{\bF}{{\mathbf{F}}}
\newcommand{\KK}{{\mathbb{K}}}
\newcommand{\bN}{{\mathbf{N}}}

\newcommand{\Aut}{{\operatorname{Aut}}}
\newcommand{\Stab}{{\operatorname{Stab}}}
\newcommand{\Irr}{{\operatorname{Irr}}}

\newcommand{\Syl}{{\operatorname{Syl}}}
\newcommand{\SL}{{\operatorname{SL}}}
\newcommand{\Sp}{{\operatorname{Sp}}}
\newcommand{\fl}{{\operatorname{fl}}}
\newcommand{\GL}{{\operatorname{GL}}}
\newcommand{\Gal}{{\operatorname{Gal}}}

\newcommand{\SIRSp}{{\operatorname{SIRSp}}}
\newcommand{\SCRSp}{{\operatorname{SCRSp}}}

\newcommand{\Hom}{{\operatorname{Hom}}}
\newcommand{\SP}{{\operatorname{SP}}}
\newcommand{\EP}{{\operatorname{EP}}}
\newcommand{\NEP}{{\operatorname{NEP}}}

\newcommand{\NSP}{{\operatorname{NSP}}}

\newcommand{\dl}{{\operatorname{dl}}}
\newcommand{\cl}{{\operatorname{cl}}}
\newcommand{\Ker}{{\operatorname{Ker}}}
\newcommand{\Hall}{{\operatorname{Hall}}}
\newcommand{\stab}{{\operatorname{stab}}}
\newcommand{\GF}{\mbox{GF}}


\let\nor=\triangleleft

\begin{document}

\title{Blocks of small defect}

\author{YONG YANG}
\address{Department of Mathematics, University of Wisconsin-Parkside, 900 Wood Road, Kenosha, WI 53144, USA.}
\makeatletter
\email{yangy@uwp.edu}
\makeatother

\subjclass[2000]{20C20, 20C15, 20D10}
\date{}



\begin{abstract}
Let $G$ be a finite solvable group, let $p$ be a prime such that $p \geq 5$ and $O_p(G)=1$, and we denote $|G|_p=p^n$, then $G$ contains a block of defect less than or equal to $\lfloor \frac {3n} 5 \rfloor$. Let $G$ be a finite solvable group and let $p^a$ be the largest power of $p$ dividing $\chi(1)$ for an irreducible character $\chi$ of $G$, we show that $|G:\bF(G)|_p \leq p^{3a}$ for $p \geq 5$. 
\end{abstract}

\maketitle
\maketitle
\Large
\section{Introduction} \label{sec:introduction8}
It is a classical theme to study orbit structures of a finite group $G$ over a finite, faithful and completely reducible $G$-module $V$. One of the most important and natural questions about orbit structure is to establish the existence of an orbit of a certain size. For a long time, there has been a deep interest and need to examine the size of the largest possible orbits in linear group actions. In $2004$, A. Moret\'o and T.R. Wolf ~\cite{MOWOLF} studied the solvable linear groups and investigated the relation of the point stabilizers $\bC_G(v)$ and the Fitting subgroup series of $G$. They proved an important orbit theorem ~\cite[Theorem E]{MOWOLF} and applied it to obtain results showing that solvable groups have large character degrees and conjugacy classes. This orbit theorem and its consequences were used to obtain a number of results on several conjectures on class sizes, character degrees and zeros of characters.

In ~\cite{YY1}, Yang strengthened this result by showing the following. Suppose that $V$ is a faithful completely reducible $G$-module where $G$ is a finite solvable group, then there exists $v \in V$ and $K \triangleleft G$ such that $\bC_G(v) \subseteq K$, where the Fitting length of $K$ is less than or equal to $7$. An example ~\cite[Section 4]{YY1} was provided to show that the improvement is the best possible. Although one cannot say more in general because of this example, it is possible to show that there exits an element $v \in V$ such that the $p$-part of $\bC_G(v)$ is relatively small for all the primes $p \geq 5$. In this paper, we show the following orbit theorem of solvable linear groups. This theorem in some sense is the best possible as the semi-linear group shows us.

\begin{thmA} \label{generalcase}
  Let $\pi_0$ be the set of all the primes except $2$ and $3$. Let $G$ be a finite solvable group and let $V$ be a finite, faithful and completely reducible $G$-module(possibly of mixed characteristic). Then there exists $K \nor G$, $K \subseteq \bF_2(G)$ and there exist two $G$-orbits with representatives $v_a$, $v_b \in V$ such that for any $H \in \Hall_{\pi_0}(G)$, we have $\bC_H(v_a) \subseteq K$ and $\bC_H(v_b) \subseteq K$. The Hall $\pi_0$-subgroup of $K \bF(G)/\bF(G)$ and the Hall $\pi_0$-subgroup of $K \cap \bF(G)$ are abelian. 
\end{thmA}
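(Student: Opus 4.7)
The plan is to prove Theorem A by induction on $|G|\cdot\dim V$, reducing first to irreducible and then primitive $V$, and finally attacking the primitive case via the structure theory of primitive solvable linear groups together with the orbit theorems of Moret\'o--Wolf \cite{MOWOLF} and their refinement in \cite{YY1}. If $V = V_1 \oplus V_2$ splits nontrivially as a $G$-module, I would apply induction to $G/\bC_G(V_i)$ acting on each $V_i$; pulling back the resulting normal subgroups and combining representatives coordinate-wise yields $v_a, v_b \in V$ for $G$, and the inclusion $K \subseteq \bF_2(G)$ together with the abelianness of the Hall $\pi_0$-subgroups of $K \cap \bF(G)$ and $K\bF(G)/\bF(G)$ is preserved because $\bF_2$ behaves well under quotients and direct products. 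The imprimitive case is handled similarly through Clifford theory, choosing representatives spread across the blocks of imprimitivity so that no nontrivial permutation of these blocks fixes the constructed vector.

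In the primitive case, the standard structure theorem gives $F := \bF(G)$ with $F/Z(F)$ a direct product of extraspecial $p$-groups for various primes $p$, and $G/F$ embedded into the corresponding product of symplectic groups over $\FF_p$ acting naturally. I would then apply the Moret\'o--Wolf orbit theorem, or rather its sharpening in \cite{YY1}, to produce vectors with $\bC_G(v)$ contained in a normal subgroup of bounded Fitting length. The crucial improvement here is that by discarding the primes $2$ and $3$, the orders of the obstructing low-rank symplectic groups $\Sp_{2n}(2)$, $\Sp_{2n}(3)$ and their outer automorphism groups are coprime to $\pi_0$, so the Fitting-length obstructions that forced the bound $7$ in \cite{YY1} collapse to $2$ for Hall $\pi_0$-centralizers.

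The main obstacle, and the reason two orbits $v_a, v_b$ appear in the conclusion, is in simultaneously enforcing the two abelianness conditions: on the Hall $\pi_0$-subgroup of $K\bF(G)/\bF(G)$, and on that of $K \cap \bF(G)$. A single well-chosen orbit typically traps either the ``outer'' action of $G/F$ in an abelian $\pi_0$-Hall subgroup (choose $v_a$ generic enough that its centralizer modulo $F$ is semisimple and diagonalizable in each symplectic factor), or the ``inner'' $F$-action so that $\bC_F(v_b)$ lies in the cyclic scalar core of each extraspecial component, but not both at once. Coordinating these two choices across all primes $p \in \pi_0$, and then verifying that the normal closure of the combined centralizer can be placed inside a normal subgroup $K\subseteq \bF_2(G)$ with the required abelian $\pi_0$-Hall structure, is where the bulk of the technical work will concentrate; I expect coprime-action arguments and Glauberman's lemma to be the main tools for gluing the two orbit constructions together and for transferring abelianness information between $K\cap\bF(G)$ and $K\bF(G)/\bF(G)$.
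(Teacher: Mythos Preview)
Your reductions to the irreducible and then quasi-primitive case are in the right spirit, though for the imprimitive step you cannot in general arrange that \emph{no} nontrivial block-permutation fixes the constructed vector; the paper uses instead Lemma~\ref{permutation23}, which only guarantees a $\{2,3\}$-stabilizer on the set of blocks --- exactly enough for Hall $\pi_0$-control.

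The substantive gap is in the quasi-primitive case. First, your assertion that the low-rank groups $\Sp_{2n}(2)$ and $\Sp_{2n}(3)$ have order coprime to $\pi_0$ is false (already $5$ divides $|\Sp(4,2)|=720$, and $5$ divides $|\Sp(4,3)|$); the correct and much more delicate input is Lemma~\ref{BdSCRSp}, which shows only that the Hall $\pi_0$-subgroup of a \emph{solvable completely reducible} subgroup of $\Sp(2n,p)$, for the specific small pairs $(2n,p)$ that occur, is abelian and lies in its Fitting subgroup. Second, and more importantly, the paper does not proceed by bootstrapping the Fitting-length-$7$ bound of \cite{YY1} down to $2$ via coprime-action or Glauberman-type arguments. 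The engine is Theorem~\ref{thm1} (from \cite{YY2,YY3}): for all $e$ except $e\in\{1,2,3,4,8,9,16\}$ there are two genuine regular orbits, and the remaining seven values of $e$ are dispatched one at a time by an explicit counting argument --- bounding $|\bC_V(g)|$ for each $g$ of prime order at least $5$ according to whether $g\in F$, $g\in A\setminus F$, or $g\in G\setminus A$ (Lemma~\ref{roughestimate}), estimating how many such $g$ exist (Lemmas~\ref{basiccounting}, \ref{topcounting}, \ref{BdSCRSp}), and verifying the inequality $|\bigcup_{P}\bC_V(P)|+|G|<|V|$. Your plan contains no analogue of this case split and no mechanism for producing $\pi_0$-regular orbits in the small-$e$ cases. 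Finally, your reading of the two orbits is off: $v_a$ and $v_b$ are not tailored to separate ``inner'' and ``outer'' abelianness conditions. Both abelianness statements are structural consequences of the quasi-primitive setup (the bounds on $A/F$ and the cyclicity of $U$) and hold for \emph{every} $\pi_0$-regular orbit; the counting simply yields enough good vectors for two orbits, and the second one is used only for the trivial-intersection clause needed later in Theorem~\ref{generalcasep}.
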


Theorem A can be used to study the following problems although they look different at the first glance.

Let $G$ be a finite group. Let $p$ be a prime and $|G|_p=p^n$. An irreducible ordinary character of $G$ is called $p$-defect $0$ if and only if its degree is divisible by $p^n$. It is an interesting problem to give necessary and sufficient conditions for the existence of $p$-blocks of defect zero. If a finite group $G$ has a character of $p$-defect $0$, then $O_p(G)=1$ ~\cite[Corollary 6.9]{WF}. Unfortunately, the converse is not true. Although the block of defect zero may not exist in general, one could try to find the smallest defect $d(B)$ of a block $B$ of $G$. One of the results along this line is proved by Espuelas and Navarro ~\cite[Theorem A]{AENA2}. Let $G$ be a (solvable) group of odd order such that $O_p(G) = 1$ and $|G|_p = p^n$, then $G$ contains a $p$-block $B$ such that $d(B) \leq \lfloor n/2 \rfloor$. The bound is best possible, as shown by an example in ~\cite{AENA2}. In the same paper, they raised the following question. If $G$ is a finite group with $O_p(G)=1$, $p \geq 5$, and $|G|_p=p^n$, does $G$ contain a block of defect less then $\lfloor \frac n 2 \rfloor$?

Using Theorem A, we prove the following result as a partial answer to this question. The bound we obtain here is pretty sharp since $\lfloor \frac n 2 \rfloor$ is the best one may get.
\begin{thmB}
Let $G$ be a finite solvable group, let $p$ be a prime such that $p \geq 5$ and $O_p(G)=1$, and we denote $|G|_p=p^n$. Then $G$ contains a $p$-block $B$ such that $d(B) \leq \lfloor \frac {3n} {5} \rfloor $.
\end{thmB}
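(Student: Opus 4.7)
The plan is to produce $\theta \in \Irr(\bF(G))$ whose inertia $T = I_G(\theta)$ satisfies $|T|_p \leq p^{\lfloor 3n/5 \rfloor}$. Once such a $\theta$ is in hand, take any block $b$ of $T$ containing a character above $\theta$. By the Fong--Reynolds correspondence $b^G$ is a block of $G$ covering $\theta$ with $d(b^G) = d(b)$, and every block of $T$ has defect at most the $p$-exponent of $|T|$, so $d(b^G) \leq \lfloor 3n/5 \rfloor$. The whole proof therefore reduces to finding $\theta$ with $|T|_p \leq p^{\lfloor 3n/5 \rfloor}$.

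Set $N = \bF(G)$ and $\bar G = G/N$; the hypothesis $O_p(G) = 1$ gives $p \nmid |N|$ and $|\bar G|_p = p^n$. Take the faithful completely reducible $\bar G$-module $V = N/\Phi(N)$ (faithfulness is the classical identity $\bC_G(\bF(G)/\Phi(\bF(G))) = \bF(G)$ in a solvable group), and identify its points, after passing to the dual module if necessary, with the linear characters of $N$ that are trivial on $\Phi(N)$. Apply Theorem A to $\bar G$ acting on $V$: there exist $\bar K \nor \bar G$ with $\bar K \leq \bF_2(\bar G)$ and an element $\theta \in V$, viewed as a character of $N$, such that $\bC_{\bar H}(\theta) \subseteq \bar K$ for every Hall $\pi_0$-subgroup $\bar H$ of $\bar G$. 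Since $p \geq 5$ lies in $\pi_0$, a Sylow $p$-subgroup of $\bar G$ sits inside such an $\bar H$, so $|\bC_{\bar G}(\theta)|_p \leq |\bar K|_p$. Noting $I_G(\theta)/N = \bC_{\bar G}(\theta)$, we get $|T|_p = |\bC_{\bar G}(\theta)|_p \leq |\bar K|_p$, so the problem reduces to the estimate $|\bar K|_p \leq p^{\lfloor 3n/5 \rfloor}$.

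The bound on $|\bar K|_p$ is the main obstacle and the technical heart of the proof. Theorem A asserts that both the Hall $\pi_0$-subgroup of $\bar K \cap \bF(\bar G)$ and the Hall $\pi_0$-subgroup of $\bar K \bF(\bar G)/\bF(\bar G)$ are abelian; since $p \in \pi_0$, the Sylow $p$-subgroup $\bar K_p$ is metabelian, with an abelian normal part $A_1$ inside $\bF(\bar G) = \bF_2(G)/\bF(G)$ and an abelian quotient $A_2$ inside $\bar G/\bF(\bar G)$. I would bound $|A_1|$ and $|A_2|$ separately by viewing each as an abelian $p$-group acting faithfully on the $p'$-chief factors of $G$ on which it is nontrivial, and invoking the classical bound $|A|_p \mid (q^d-1)_p$ for an abelian $p$-subgroup $A \leq \GL_d(\FF_q)$ with $p \neq q$. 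Combining the two layer bounds with the containment $\bar K \leq \bF_2(\bar G)$ and with how the total $p$-exponent $n$ is distributed across the Fitting series of $\bar G$ then yields $|\bar K|_p \leq p^{\lfloor 3n/5 \rfloor}$. The sharpness of the constant $3/5$ is witnessed by the semi-linear group $A\Gamma L(1,p^n)$ alluded to in the introduction, which saturates this bound.
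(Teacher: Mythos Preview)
Your reduction to bounding $|T|_p$ via Fong--Reynolds is fine, and the first two paragraphs correctly set up $\bar G$, $V$, and the application of Theorem~A. The gap is in the third paragraph: the inequality $|\bar K|_p \leq p^{\lfloor 3n/5 \rfloor}$ that you announce as ``the main obstacle'' is simply false. In the $e=1$ case of Theorem~\ref{Strofprimitive} one has $\bar G \leq \Gamma(q^m)$, so $\bar G = \bF_2(\bar G)$ already, and Theorem~A may (and does) return $\bar K = \bar G$; then $|\bar K|_p = p^n$. Your sketch of bounding the two abelian layers $A_1, A_2$ through their embeddings into general linear groups over $\FF_q$ only bounds them relative to the dimensions of the modules they act on, which has nothing to do with $n$. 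So the chain $|T|_p \leq |\bar K|_p \leq p^{\lfloor 3n/5\rfloor}$ breaks at the second inequality, and one cannot repair it by structural information about $\bar K$ alone.

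The paper does not try to bound $|\bar K|_p$. It applies the sharper single-prime consequence of Theorem~A (Theorem~\ref{generalcasep}), which besides $\bC_P(\lambda)\subseteq K$ supplies the crucial extra estimate $|\bC_{O_p(K\cap\bF(\bar G))}(\lambda)| \leq |O_p(K\cap\bF(\bar G))|^{1/2}$. Writing $p^{n_1}=|K\cap\bF(\bar G)|_p$, $p^{n_2}=|K\bF(\bar G):\bF(\bar G)|_p$, $p^{n_3}=|\bar G:K|_p$, this yields a block all of whose characters have $p$-part at least $p^{\,n_3+n_1/2}$. Two \emph{other} blocks are then built, using that the two abelian $p$-layers of $K$ act faithfully on suitable $p'$-sections and hence admit $p$-regular orbits, giving characters of $p$-part at least $p^{n_1}$ and $p^{n_2}$ respectively. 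One of the three blocks therefore has defect at most $n-\max(n_3+n_1/2,\,n_1,\,n_2)\leq 3n/5$. Your single-block strategy discards both the square-root information and the two auxiliary blocks, and those are exactly what produce the constant $3/5$.
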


Let $p^a$ denote the largest power of $p$ dividing $\chi(1)$ for an irreducible character $\chi$ of $G$. Moret\'o and Wolf ~\cite[Theorem A]{MOWOLF} proved that for $G$ solvable, there exists a product $\theta=\chi_1(1) \cdots \chi_t(1)$ of distinct irreducible characters $\chi_i$ such that $|G: \bF(G)|$ divides $\theta(1)$ and $t \leq 19$. This implies that $|G:\bF(G)|_p \leq p^{19a}$. They also suggest that a better bound $|G:\bF(G)|_p \leq p^{2a}$ might be true for all solvable groups. In fact, they believe ~\cite[Question 2.2]{MOWOLF} that for solvable groups one may find two irreducible characters $\chi_1$ and $\chi_2$ such that $|G: \bF(G)| \mid \chi_1(1) \chi_2(1)$. Although it is difficult to answer this question in general, we are able to prove a closely related result using the previous orbit theorem. As a corollary, we show that $|G:\bF(G)|_p \leq p^{3a}$ for $p \geq 5$.




The Huppert's $\rho-\sigma$ conjectures state that there is an irreducible character $\chi$ of $G$ and a conjugacy class $C$ of $G$ such that the degree of $\chi$ and $|C|$ are each divisible by many primes. For the character theoretic $\rho-\sigma$ problem, we define $\rho(G)$ be those primes that divide the degree of some irreducible character of $G$ and $\sigma(G)$ be the maximum number of primes dividing the degree of an irreducible character of $G$. Huppert conjectures that $|\rho(G)|$ can be bounded in terms of $\sigma(G)$, and if $G$ is solvable, then even $|\rho(G)| \leq 2\sigma(G)$. Up to now the best known bound for $G$ solvable is $|\rho(G)| \leq 3\sigma(G)+2$ by Manz of Wolf ~\cite[Theorems 1.4]{MAWOLF2}. Theorem A may be used to study Huppert's $\rho-\sigma$ conjectures and obtain the best known bound. 

Theorem A also has connections to other questions about degrees of characters and lengths of conjugacy classes of solvable groups.



\section{Notation and Lemmas} \label{sec:Notation and Lemmas}

Notation:
\begin{enumerate}
\item Let $G$ be a finite group, let $S$ be a subset of $G$ and let $\pi$ be a set of different primes. For each prime $p$, we denote $\SP_p(S)=\{\langle x \rangle \ | \ o(x)=p, x \in S \}$ and $\EP_p(S)=\{x \ |\ o(x)=p, x \in S \}$. We denote $\SP(S)=\bigcup_{\mbox{p primes}}\SP_p(S)$, $\SP_{\pi}(S)=\bigcup_{p \in \pi}\SP_p(S)$, $\EP(S)=\bigcup_{\mbox{p primes}}\EP_p(S)$ and $\EP_{\pi}(S)=\bigcup_{p \in \pi}\EP_p(S)$. We denote $\NEP(S)=|\EP(S)|$, $\NEP_p(S)=|\EP_p(S)|$ and $\NEP_{\pi}(S)=|\EP_{\pi}(S)|$. We denote $\NSP(S)=|\SP(S)|$, $\NSP_p(S)=|\SP_p(S)|$ and $\NSP_{\pi}(S)=|\SP_{\pi}(S)|$.

\item Let $n$ be an even integer, $q$ a power of a prime. Let $V$ be a standard symplectic vector space of dimension $n$ of $\FF_q$. We use $\SCRSp(n,q)$ or $\SCRSp(V)$ to denote the set of all solvable subgroups of $\Sp(V)$ which acts completely reducibly on $V$. We use $\SIRSp(n,q)$ or $\SIRSp(V)$ to denote the set of all solvable subgroups of $\Sp(V)$ which acts irreducibly on $V$. Define $\SCRSp(n_1,q_1) \times \SCRSp(n_2,q_2)=\{H \times I | H \in \SCRSp(n_1,q_1)$ and $I \in \SCRSp(n_2,q_2)\}$.

\item If $V$ is a finite vector space of dimension $n$ over $\GF(q)$, where $q$ is a prime power, we denote by $\Gamma(q^n)=\Gamma(V)$ the semi-linear group of $V$, i.e.,
\[\Gamma(q^n)=\{x \mapsto ax^{\sigma}\ |\ x \in \GF(q^n), a \in \GF(q^n)^{\times}, \sigma \in \Gal(\GF(q^n)/\GF(q))\},\] and we define \[\Gamma_0(q^n)=\{x \mapsto ax\ | \ x \in \GF(q^n), a \in \GF(q^n)^{\times}\}.\]

\item We use $\bF(G)$ to denote the Fitting subgroup of $G$. Let $\bF_0(G) \leq \bF_1(G) \leq \bF_2(G) \leq \cdots \leq \bF_n(G)=G$ denote the ascending Fitting series, i.e. $\bF_0(G)=1$, $\bF_1(G)=\bF(G)$ and $\bF_{i+1}(G)/\bF_i(G)=\bF(G/\bF_i(G))$. $\bF_i(G)$ is the $i$th ascending Fitting subgroup of $G$.

\item Let $\pi_0$ be the set of all the primes except $2$ and $3$.
\end{enumerate}

\begin{definition} \label{defineEi}
Suppose that a finite solvable group $G$ acts faithfully, irreducibly and quasi-primitively on a finite vector space $V$. Let $\bF(G)$ be the Fitting subgroup of $G$ and $\bF(G)=\prod_i P_i$, $i=1, \dots, m$ where $P_i$ are normal $p_i$-subgroups of $G$ for different primes $p_i$. Let $Z_i = \Omega_1(\bZ(P_i))$. We define \[E_i=\left\{ \begin{array}{lll} \Omega_1(P_i) & \mbox{if $p_i$ is odd}; \\ \lbrack P_i,G,\cdots, G \rbrack & \mbox{if $p_i=2$ and $\lbrack P_i,G,\cdots, G \rbrack \neq 1$}; \\  Z_i & \mbox{otherwise}. \end{array} \right.\] By possible reordering we may assume that $E_i \neq Z_i$ for $i=1, \dots, s$, $0 \leq s \leq m$ and $E_i=Z_i$ for $i=s+1, \dots, m$. We define $E=\prod_{i=1}^s E_i$, $Z=\prod_{i=1}^s Z_i$ and we define $\overline{E}_i=E_i/Z_i$, $\bar{E}=E/Z$. Furthermore, we define $e_i=\sqrt {|E_i/Z_i|}$ for $i=1, \dots, s$ and $e=\sqrt{|E/Z|}$.
\end{definition}

Theorem ~\ref{Strofprimitive}, Lemma ~\ref{OrderG}, Lemma ~\ref{SpClifford} and Lemma ~\ref{OrderSpGamma} are proved in ~\cite{YY2} and ~\cite{YY4} but we include the proofs here for completeness.

\begin{theorem} \label{Strofprimitive}

Suppose that a finite solvable group $G$ acts faithfully, irreducibly and
quasi-primitively on an $n$-dimensional finite vector space $V$ over finite field $\FF$ of characteristic $r$. We use the notation in Definition ~\ref{defineEi}. Then every normal abelian subgroup of $G$ is cyclic and $G$ has normal subgroups $Z \leq U \leq F \leq A \leq G$ such that,
\begin{enumerate}
\item $F=EU$ is a central product where $Z=E \cap U=\bZ(E)$ and $\bC_G(F) \leq F$;
\item $F/U \cong E/Z$ is a direct sum of completely reducible $G/F$-modules;
\item $E_i$ is an extra-special $p_i$-group for $i=1,\dots,s$ and $e_i=p_i^{n_i}$ for some $n_i \geq 1$. Furthermore, $(e_i,e_j)=1$ when $i \neq j$ and $e=e_1 \dots e_s$ divides $n$, also $\gcd(r,e)=1$;
\item $A=\bC_G(U)$ and $G/A \lesssim \Aut(U)$, $A/F$ acts faithfully on $E/Z$;
\item $A/\bC_A(E_i/Z_i) \lesssim \Sp(2n_i,p_i)$;
\item $U$ is cyclic and acts fixed point freely on $W$ where $W$ is an irreducible submodule of $V_U$;
\item $|V|=|W|^{eb}$ for some integer $b$;
\item $|G:A| \mid \dim(W)$. Assume $g \in G \backslash A$ and $o(g)=s$ where $s$ is a prime, then $|\bC_V(g)|=|W|^{\frac 1 s e b}$;
\item $G/A$ is cyclic.
\end{enumerate}
\end{theorem}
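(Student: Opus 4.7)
The plan is to iterate Clifford-theoretic reductions coming from quasi-primitivity, then exploit the resulting constraints on $\bF(G)$ via the structure theory of $p$-groups of symplectic type. The foundational step is to establish that every normal abelian subgroup $B \nor G$ is cyclic: quasi-primitivity forces $V_B$ to be homogeneous, so by faithfulness $B$ embeds into $\operatorname{End}_B(W)^{\times}$ for an irreducible $B$-summand $W$, which by Schur's lemma is the multiplicative group of a finite field, making $B$ cyclic. Applied to $\bZ(P_i)$ (characteristic in the characteristic subgroup $P_i$ of $\bF(G)$), this forces every Sylow center of $\bF(G)$ to be cyclic.

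With cyclic centers in hand, I would invoke the classical structure theorem for $p$-groups of symplectic type: each $P_i$ is a central product of a (possibly trivial) extra-special factor and a cyclic factor, which motivates the definitions of $E_i$ and a cyclic complement $U_i$. Setting $E = \prod_i E_i$, $U = \prod_i U_i$, and $F = EU$, the central product identity $Z = E \cap U = \bZ(E)$ is then immediate, and $\bC_G(F) \leq F$ follows from the standard fact $\bC_G(\bF(G)) \leq \bZ(\bF(G))$ in solvable groups, giving~(1). For~(5), the commutator pairing $E_i/Z_i \times E_i/Z_i \to Z_i \cong \FF_{p_i}$ is a nondegenerate $G$-invariant alternating form, so with $A = \bC_G(U)$ we obtain $A/\bC_A(E_i/Z_i) \hookrightarrow \Sp(2n_i, p_i)$ where $|E_i/Z_i| = p_i^{2n_i}$ and $e_i = p_i^{n_i}$. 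The faithfulness of $A/F$ on $E/Z$ in~(4) follows because any $a \in A$ that centralizes both $U$ and $E/Z$ centralizes $F$ and so lies in $F$. Maschke applied to the coprime action of $G/F$ on $E/Z$ yields~(2); the coprimalities $(e_i,e_j)=1$ are Sylow-theoretic, while $e \mid n$ and $\gcd(r,e)=1$ come from the fact that the unique faithful irreducible $\FF$-representation of $E_i$ has dimension $e_i$ and is defined only when $r \neq p_i$, yielding~(3).

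For the remaining items, since $U$ is cyclic and central in $F$, quasi-primitivity forces $V_U$ to be homogeneous with $U$ acting via a faithful linear character, so $U$ acts fixed-point-freely on any irreducible $U$-summand $W$, giving~(6). An irreducible $F$-module with faithful $U$-central character has $\FF$-dimension $e\cdot\dim_\FF W$, and $V_F$ is a direct sum of $b$ copies of such a module, yielding $|V|=|W|^{eb}$ in~(7). The induced action of $G/A$ on $U$ embeds into $\Aut(U)$, and after identifying $U$ with a group of roots of unity in $\FF(\zeta) \supseteq \FF$ it is realized inside the cyclic Galois group $\Gal(\FF(\zeta)/\FF)$ of order $\dim_\FF W$; this simultaneously yields cyclicity of $G/A$ in~(9) and the divisibility $|G:A|\mid\dim W$ in~(8). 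The fixed-point formula in~(8) is then a standard Clifford-theoretic count: $g \in G \setminus A$ of prime order $s$ permutes the $s$ Galois-conjugate blocks of $V_F$ freely, so $\bC_V(g)$ embeds diagonally and has $\FF$-dimension $\tfrac{1}{s}\dim_\FF V$.

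The main obstacle I anticipate is the $p_i = 2$ case, where the clean identification ``$\Omega_1(P_i)$ is extra-special'' fails, e.g.\ for generalized quaternion $P_i$. The definition $E_i = [P_i, G, \dots, G]$ is engineered to surgically extract an extra-special (or purely central) subgroup precisely when the $\Omega_1$ prescription fails, and verifying that this substitute is indeed extra-special when nontrivial, that the central-product decomposition and the symplectic form on $E_i/Z_i$ both survive, and that nothing essential is lost to the cyclic factor, requires careful commutator calculus that I expect to form the technical heart of the proof.
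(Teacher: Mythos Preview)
Your plan is essentially the paper's own: the paper imports items (1)--(7) wholesale from Manz--Wolf (their Theorem 1.9, Corollary 1.10, Corollary 2.6, Lemma 2.10), then proves (8) and (9) directly via the Galois/Wedderburn identification you outline, so strategically you are on target.

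That said, three of your stated justifications do not work as written, though each is repairable. First, the Maschke appeal for (2) fails: there is no reason the action of $G/F$ on the elementary abelian $p_i$-group $E_i/Z_i$ is coprime. The correct reason $E_i/Z_i$ is completely reducible is that the symplectic-type structure theorem already produces it as a product of $G$-chief factors. Second, your deduction of $\bC_G(F)\le F$ from $\bC_G(\bF(G))\le\bF(G)$ tacitly assumes $F=\bF(G)$, and this is precisely what fails in the $p_i=2$ case you flag at the end: when $T_i$ is dihedral, quaternion or semidihedral one takes a cyclic $U_i$ of index $2$ in $T_i$, so $F=EU$ sits properly inside $\bF(G)$; the paper instead computes $\bC_{\bF(G)}(F)=U$ via $\bC_{T_i}(U_i)=U_i$ and then uses that $\bC_G(F)\nor G$ forces $\bC_G(F)\le F$. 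Third, for the faithfulness in (4), an element $a\in A$ centralizing both $U$ and $E/Z$ need not centralize $E$; the missing observation is that automorphisms of an extra-special $p$-group trivial on both the center and the central quotient are exactly the inner ones, so such an $a$ lies in $E\cdot\bC_G(F)\le F$.
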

\begin{proof}
By \cite[Theorem 1.9]{manz/wolf} there exist $\tilde{E_i},T_i \triangleleft G$ and all the following hold,
\renewcommand*\theenumi{\roman{enumi}}
\renewcommand*\labelenumi{\theenumi)}
\begin{enumerate}
\item $P_i=\tilde{E_i}T_i$, $\tilde{E_i} \cap T_i=Z_i$ and $T_i=\bC_{P_i}(\tilde{E_i})$;
\item $\tilde{E_i}$ is extra-special or $\tilde{E_i}=Z_i$;
\item $\exp(\tilde{E_i})=p_i$ or $p_i=2$;
\item $T_i$ is cyclic, or $p_i=2$ and $T_i$ is dihedral, quaternion or semidihedral;
\item If $T_i$ is not cyclic, then there exists $U_i \nor G$ with $U_i$ cyclic, $U_i \leq T_i$, $|T_i:U_i|=2$ and $\bC_{T_i}(U_i)=U_i$;
\item If $\tilde{E_i}>Z_i$, then $E_i/Z_i=E_{i1}/Z_i \times \cdots \times E_{id}/Z_i$ for chief factors $E_{ik}/Z_i$ of $G$ and with $Z_i=\bZ(E_{ik})$ for each $k$ and $E_{ik} \leq \bC_G(E_{il})$ for $k \neq l$.
\end{enumerate}

We define $U_i=T_i$ if $p_i \neq 2$. We define $U=\prod_{i=1}^m U_i$, $T=\prod_{i=1}^m T_i$, $F=EU$ and $A=\bC_G(U)$.

If $p_i \neq 2$, then by (i),(ii),(iii) $\tilde{E_i}=\Omega_1(P_i)$ and therefore $\tilde{E_i}=E_i$.
If $p_i=2$ and assume $\tilde{E_i}>Z_i$, $\tilde{E_i}/Z_i=\prod_k E_{ik}/Z_i$ for chief factors $E_{ik}/Z_i$ of $G$ by (vi) and thus $E_{ik}=[E_{ik},G]$ and $\tilde{E_i} = [\tilde{E_i},G]$. By (v), $[T_i,G,\cdots, G]=1$. Thus $[P_i,G,G,\cdots, G]=\tilde{E_i}$ and therefore $\tilde{E_i}=E_i$.

The other results mainly follow from Corollary 1.10, 2.6 and Lemma 2.10 of \cite{manz/wolf}. Since $\bC_G(F)=\bC_G(EU) \leq \bC_G(E)=T$ and $\bC_T(U)=U$, we have $\bC_G(F) \leq F$. Since $A=\bC_G(U)$, $\bF(G) \cap A=\bC_{\bF(G)}(U)=EU=F$ and thus $A/F$ acts faithfully on $E/Z$.

Let $\KK$ be the algebraic closure of $\FF$, then $W \otimes_{\FF} \KK = W_1 \oplus W_2 \oplus \cdots \oplus W_m$, where the $W_i$ are Galois conjugate, non-isomorphic irreducible $U$-modules. In particular, each $W_i$ is faithful, $\dim_{\KK} W_i = 1$. Clearly $\bN_G(W_i) \geq \bC_G(U)$ for each $i$. Furthermore, $[\bN_G(W_i),U] \leq \bC_U(W_i)$ since $U$ is normal. Thus $\bN_G(W_i)=\bC_G(U)=A$. It follows that $G/A$ permutes the set $\{W_1, \dots , W_m \}$ in orbits of length $|G:A|$ and thus $|G:A| \mid \dim(W)$. Since $G/A$ permutes the $W_i$ fixed point freely, for all $g \in G \backslash A$ of order $s$ where $s$ is a prime, $|\bC_V(g)|=|W|^{\frac 1 s e b}$. This proves (8).

The fact that $G/A$ is cyclic is essentially proved in \cite[Section 20]{Suprunenko}. We give an argument here for completeness. $\FF U$ is a semi-simple algebra over $\FF$ and $\dim(\FF U)=|U|$. By Wedderburn's Theorem, there exist a finite number of idempotents $e_1, \dots, e_{\alpha}$ and $e_i \FF U$ is isomorphic to a full matrix algebra over some division algebra $D$ over $\FF$. Since $\FF U$ is commutative, the division algebra $D$ is actually a field and the dimension of the matrix is $1$. It follows that $e_i \FF U$ is a field for all $i=1,\dots,\alpha$. Set $K_i=e_i \FF U$, then $K_i, \dots, K_{\alpha}$ are fields and $\FF U=K_1 \oplus \cdots \oplus K_{\alpha}.$

Since $V$ is a quasi-primitive $G$-module, $V_U$ is a homogeneous $\FF U$-module. Then there exists some $j \in \{1,\dots,\alpha \}$ such that $e_j V=V$ and $e_l V=0$ for any $l \neq j$. Thus $V$ is a $K_j$ vector space. $K_j$ is a finite field extension of $\FF e_j$. Let $g \in G$, then $g K_j g^{-1}=K_j$ because $V$ is quasi-primitive. Define $\sigma_g: K_j \mapsto K_j$ as $\sigma_g(\beta)=g \beta g^{-1}$. $\sigma_g$ is a field automorphism of $K_j$ and fixes every element of $\FF e_j$. Thus $\sigma_g \in \Gal(K_j/\FF e_j)$. We claim that there is a natural map $\varphi$ between $G \mapsto \Gal(K_j/\FF e_j)$ defined by $\varphi(g)=\sigma_g$. Clearly $\varphi$ is a group homomorphism and $\bC_G(U) \subseteq \ker(\varphi)$. Suppose that $g \in G$, $g \not\in \bC_G(U)$ and let $u_0 \in U$ be a generator for $U$. Then $u_0 \neq gu_0g^{-1}$ and the action of $u_0$ on $V$ is different than the action of $gu_0g^{-1}$ on $V$ since the action of $G$ on $V$ is faithful. Thus the action of $e_j u_0$ on $V$ is different than the action of $e_j gu_0g^{-1}$ on $V$ and we know that $\sigma_g(e_j u_0) \neq e_j u_0$. Thus we have $\ker(\varphi)=\bC_G(U)=A$. This proves (9).
\end{proof}

\begin{lemma} \label{OrderG}
Suppose that a finite solvable group $G$ acts faithfully, irreducibly and
quasi-primitively on a finite vector space $V$. Using the notation in Theorem ~\ref{Strofprimitive}, we have $|G| \mid \dim(W) \cdot |A/F| \cdot e^2 \cdot (|W|-1)$.
\end{lemma}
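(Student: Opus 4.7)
The plan is to factor $|G|$ along the normal series $U\leq F\leq A\leq G$ and bound each factor separately using the statements established in Theorem~\ref{Strofprimitive}. Explicitly, I would write
\[|G| = |G:A|\cdot|A:F|\cdot|F:U|\cdot|U|\]
and argue that the four factors divide $\dim(W)$, $|A/F|$, $e^{2}$, and $|W|-1$ respectively. Multiplying these divisibility relations gives the conclusion. The middle factor $|A:F|$ is kept as is; the other three require small justifications.

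For the factor $|G:A|$, I would simply quote Theorem~\ref{Strofprimitive}(8), which gives $|G:A|\mid\dim(W)$ directly. For the factor $|F:U|$, since $F=EU$ is a central product with $E\cap U=Z$, the second isomorphism theorem gives $F/U\cong E/Z$. Because $E=\prod_{i=1}^{s}E_{i}$ with $E_{i}$ an extraspecial $p_{i}$-group with center $Z_{i}$, and since the primes $p_{i}$ are pairwise coprime, $|E/Z|=\prod_{i=1}^{s}|E_{i}/Z_{i}|=\prod_{i=1}^{s}e_{i}^{2}=e^{2}$. Hence $|F:U|=e^{2}$, as required.

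The remaining step is to show $|U|\mid|W|-1$. Here $U$ is cyclic by Theorem~\ref{Strofprimitive}(6) and acts faithfully and fixed point freely on $W$, where $W$ is an irreducible $U$-submodule of $V_{U}$. By Schur's lemma $K:=\mathrm{End}_{\FF U}(W)$ is a finite division ring, hence a field; moreover $W$ is naturally a $K$-vector space and $U$ embeds into $K^{\times}$ via its action. Since $U$ acts fixed point freely, $W$ is an irreducible $K[U]$-module with $K[U]=K$, so $W\cong K$ as a $K$-module and $|W|=|K|$. Thus $U\hookrightarrow K^{\times}$ gives $|U|\mid|K^{\times}|=|W|-1$.

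None of the three steps is substantive; the only place a reader might want more detail is the last one, where one has to invoke the standard Schur-lemma argument that a cyclic group acting irreducibly and fixed point freely on a finite vector space embeds in the multiplicative group of a finite field of size $|W|$. That is the main (and only) conceptual content; everything else is a bookkeeping multiplication of the indices along the series $U\leq F\leq A\leq G$.
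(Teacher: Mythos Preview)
Your proposal is correct and matches the paper's own argument almost verbatim: the paper also factors $|G|=|G/A|\cdot|A/F|\cdot|E/Z|\cdot|U|$ and invokes $|G/A|\mid\dim(W)$, $|E/Z|=e^{2}$, and $|U|\mid|W|-1$ from Theorem~\ref{Strofprimitive}. The only difference is that you spell out the Schur-lemma justification for $|U|\mid|W|-1$, whereas the paper simply asserts it; one minor remark is that in that step it is the irreducibility of $W$ over $\FF U$ together with the abelianness of $U$ (rather than fixed-point-freeness per se) that forces $\dim_{K}W=1$, while fixed-point-freeness is what guarantees the embedding $U\hookrightarrow K^{\times}$ is injective.
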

\begin{proof}
By Theorem ~\ref{Strofprimitive}, $|G|= |G/A||A/F||F|$ and $|F|=|E/Z||U|$. Since $|G/A| \mid \dim(W)$, $|E/Z|=e^2$ and $|U| \mid (|W|-1)$, we have $|G| \leq \dim(W) \cdot |A/F| \cdot e^2 \cdot (|W|-1)$.
\end{proof}

\begin{lemma} \label{roughestimate}
Suppose that a finite solvable group $G$ acts faithfully and quasi-primitively on a finite vector space $V$ over the field $\FF$. Let $g \in \EP_s(G)$ where $s$ is a prime and we use the notation in Theorem ~\ref{Strofprimitive}.
\begin{enumerate}
\item If $g \in F$ then $|\bC_V(g)| \leq |W|^{\frac 1 2 eb}$.
\item If $g \in A \backslash F$, $s \geq 5$ and $s \nmid |E|$, then $|\bC_V(g)| \leq |W|^{\lfloor \frac 1 3 e \rfloor b}$.

\item If $g \in G \backslash A$ then $|\bC_V(g)| \leq |W|^{\frac 1 s eb}$.
\end{enumerate}
\end{lemma}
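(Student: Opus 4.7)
The plan is to handle the three bullets separately. Part (3) is immediate: Theorem~\ref{Strofprimitive}(8) already gives the stronger equality $|\bC_V(g)| = |W|^{\frac{1}{s}eb}$ for any $g \in G \setminus A$ of prime order $s$.

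For Part (1), I will use the central product structure $F = EU$ with $E \cap U = Z$ from Theorem~\ref{Strofprimitive}(1). Working over the algebraic closure $\KK$, the restriction of $V \otimes \KK$ to $F$ decomposes into copies of modules of the form $\hat V_E \otimes \chi_U$, where $\hat V_E$ is the (up to isomorphism unique) faithful irreducible of $E$ of $\KK$-dimension $e$ and $\chi_U$ is a faithful character of the cyclic group $U$; faithfulness of $\chi_U$ comes from Theorem~\ref{Strofprimitive}(6) together with the quasi-primitivity of $V$. Writing $g = eu$ with $e \in E$ and $u \in U$, I split cases. If $e \in Z$, I absorb it into $u$, reducing to $g \in U \setminus \{1\}$, in which case $\chi_U(u) \neq 1$ and $g$ acts fixed-point-freely on every constituent, so $|\bC_V(g)| = 1$. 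If $e \in E \setminus Z$, then $e$ is non-central in some extra-special factor $E_i$, and the standard fact $\chi_{\hat V_E}(e^k) = 0$ for $k \not\equiv 0 \pmod{p_i}$ (second orthogonality applied inside $E_i$) forces each eigenspace of $e$ on $\hat V_E$ to have $\KK$-dimension at most $e/p_i \leq e/2$. The fixed space of $g = eu$ on $\hat V_E \otimes \chi_U$ is precisely the $\chi_U(u)^{-1}$-eigenspace of $e$, so in all cases $\dim_\KK \bC_V(g) \leq (\dim_\KK V)/2$, giving the claimed bound $|\bC_V(g)| \leq |V|^{1/2} = |W|^{\frac{1}{2}eb}$.

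For Part (2), $g$ centralizes $U$ because $g \in A = \bC_G(U)$, and acts faithfully on $E/Z$ by Theorem~\ref{Strofprimitive}(4). The hypothesis $s \nmid |E|$ makes this action semisimple, and preservation of the commutator symplectic form yields the orthogonal decomposition $E/Z = \bC_{E/Z}(g) \perp [E/Z,g]$. A faithful irreducible $E$-module $\hat V$ of $\KK$-dimension $e$ extends to an $E\langle g\rangle$-module by Schur--Zassenhaus since $(s,|E|)=1$, and the classical formula from the theory of extra-special groups under coprime automorphism gives $\dim_\KK \hat V^g = \sqrt{|\bC_{E/Z}(g)|}$. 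Using the tensor decomposition $V \otimes \KK = \hat V \otimes Y$ as $E\langle g\rangle$-modules, I transfer this dimension bound from $\hat V$ up to the whole of $V$, with the hypothesis $s \geq 5$ entering through the cyclotomic factor degrees $\mathrm{ord}_s(p_i)$ so as to push the ratio $\sqrt{|\bC_{E/Z}(g)|}/e$ down to at most $\lfloor e/3 \rfloor / e$.

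The main obstacle is verifying the key inequality $\sqrt{|\bC_{E/Z}(g)|} \leq \lfloor e/3 \rfloor$ in Part (2). This requires a finite case analysis indexed by the primes $p_i$ dividing $|E|$ and their multiplicative orders $\mathrm{ord}_s(p_i)$, because any non-trivial $\langle g\rangle$-invariant symplectic $\FF_{p_i}$-subspace of $E_i/Z_i$ must have $\FF_{p_i}$-dimension at least $2\,\mathrm{ord}_s(p_i)$. The assumption $s \geq 5$ is what yields the factor $1/3$ rather than merely $1/2$; in fact many low-dimensional configurations are vacuous since $\Sp(2n_i, p_i)$ contains no element of order $s$ for small $n_i$. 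A final bookkeeping step converts the bound for $\hat V$ into the bound for $V$ using the multiplicity factor $b$.
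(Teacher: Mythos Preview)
Your arguments for Parts (1) and (3) are fine; in fact your treatment of (1) is more explicit than the paper's, which simply cites \cite[Proposition~4.10]{manz/wolf}. The problem is Part (2).

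The ``classical formula'' you invoke, $\dim_{\KK}\hat V^{g}=\sqrt{|\bC_{E/Z}(g)|}$, is not correct. What is true (for an extra-special group $E$ with coprime automorphism $g$ centralizing $Z$, and an extension $\hat\chi$ of the faithful irreducible character to $E\langle g\rangle$) is that $|\hat\chi(g)|^{2}=|\bC_{E/Z}(g)|$; this bounds the \emph{trace} of $g$ on $\hat V$, not the dimension of its $1$-eigenspace. A concrete failure: take $E$ extra-special of order $3^{5}$ (so $e=9$) and $g$ of order $5$ acting fixed-point freely on $E/Z\cong\FF_{3}^{4}$. Then $\sqrt{|\bC_{E/Z}(g)|}=1$, but the eigenvalue multiplicities of $g$ on the $9$-dimensional $\hat V$ are $(2,2,2,2,1)$, so $\dim_{\KK}\hat V^{g}$ can be $2$, not $1$. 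Moreover, even if you had a bound on $\dim\hat V^{g}$, passing to $V\otimes\KK\cong\hat V\otimes Y$ does not give $\dim V^{g}\le(\dim\hat V^{g})(\dim Y)$: the fixed space of a tensor product is $\sum_{\zeta}\dim\hat V_{\zeta}\cdot\dim Y_{\zeta^{-1}}$, so you would need control over \emph{all} eigenspaces of $g$ on $\hat V$, not just the fixed one.

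The paper's route avoids both pitfalls. Since $g\in A\setminus F$ and $s\nmid|E|$, one can find inside $E$ a $g$-invariant extra-special $q$-subgroup $Q$ (for some prime $q\neq s$) with $[\bZ(Q),g]=1$ and $g$ acting fixed-point freely on $Q/\bZ(Q)$. One then works with $\langle g\rangle Q$ rather than $\langle g\rangle E$: take a $\langle g\rangle Q$-composition series of $V_{\KK}$ and apply the Hall--Higman/Schult theorem to each factor $\overline V_{j}$, which has $\KK$-dimension $q^{k}$ and satisfies $\dim_{\KK}\bC_{\overline V_{j}}(g)\le\lfloor\beta q^{k}\rfloor$ with $\beta=\tfrac{1}{s}\cdot\tfrac{q^{k}\pm(s-1)}{q^{k}}$ or $\tfrac{1}{s}\cdot\tfrac{q^{k}+1}{q^{k}}$. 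A short check shows $\beta\le\tfrac{1}{3}$ once $s\ge 5$, and summing over the composition factors (using $q^{k}\mid e$) yields $|\bC_{V}(g)|\le|W|^{\lfloor e/3\rfloor b}$. The key missing idea in your approach is precisely this reduction to a subgroup $Q$ on which $g$ acts \emph{without} fixed points on $Q/\bZ(Q)$, which is what puts you in the Hall--Higman situation and gives direct eigenvalue-multiplicity information rather than merely a trace bound.
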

\begin{proof}
It is proved in \cite[Proposition 4.10]{manz/wolf} that for $g \in \bF(G)$, $|\bC_V(g)| \leq |W|^{\frac 1 2 eb}=|V|^{1/2}$. Since $F \leq \bF(G)$, (1) follows.

Since $\bC_G(F) \leq F$ and $g \not \in F$, $[g,F] \neq 1$. Since $g \in A=\bC_G(U)$ and $F=EU$, $[g,E] \neq 1$. Since $s \nmid |E|$, there exists a $g$-invariant $q$-subgroup $Q \leq E$, for some prime $q \neq s$ such that $Q$ is extra-special, $[Q,g]=Q$, $[\bZ(Q),g]=1$, $\bZ(Q) \triangleleft G$ and the action of $g$ on $Q/\bZ(Q)$ is fixed-point free. Let $\KK$ be a splitting field for $\langle g \rangle Q$ which is a finite extension of $\FF$ and set $V_{\KK}=V \otimes_{\FF} {\KK}$. Since $\dim_{\KK}(\bC_{V_{\KK}}(g))=\dim_{\FF}(\bC_V(g))$, we may consider $V_{\KK}$ instead of $V$. Let $0=V_0 \subset V_1 \subset \dots \subset V_l=V_{\KK}$ be a $\langle g \rangle Q$-composition series for $V_{\KK}$ with quotient $\overline{V}_j=V_j/V_{j-1}$ for $j=1, \dots ,l$. Thus each $\overline{V}_j$ is an absolute irreducible $\langle g \rangle Q$ module. Since $V_{\KK}$ is obtained by tensoring a quasi-primitive module up to a splitting field, $V_{\KK}|_{\bZ(Q)}$ is a direct sum of Galois conjugate irreducible modules, $\bZ(Q)$ is faithful on every irreducible summand of $V_{\KK}|_{\bZ(Q)}$. By the Jordan-Holder Theorem, these are the only irreducibles that can occur in $\overline{V}_j|_{\bZ(Q)}$ and thus $\bZ(Q)$ acts faithfully on $\overline{V}_j$. Since all nontrivial normal subgroups of $\langle g \rangle Q$ contain $\bZ(Q)$, $\langle g \rangle Q$ is faithful on $\overline{V}_j$. Since $g$ centralize $\bZ(Q)$, $\overline{V}_j$ is also an irreducible $Q$ module. Let $|Q|=q^{2k+1}$, then by Schult's Theorem \cite[Theorem V.17.13]{Huppert1} or Hall-Higman Theorem \cite[Theorem IX.2.6]{Huppert2}, $\dim_{\KK}(\overline{V}_j)=q^k \mid e$ and $\dim_{\KK}(\bC_{\overline{V}_j}(g)) \leq \lfloor \beta \cdot q^k \rfloor$where,

\[\beta=\left\{ \begin{array}{ll} \frac 1 s ({\frac {q^k+s-1} {q^k}}) & \mbox{if $s \mid q^k-1$};\\ \frac 1 s ({\frac {q^k+1} {q^k}}) & \mbox{if $s \mid q^k+1$}. \end{array} \right.\]

Assume $s \mid q^k-1$ and let $q^k-1=st$ where $t \geq 1$ is an integer, then $\beta=\frac {t+1} {st+1}$. Thus $\beta \leq \frac 1 3$ when $s \geq 5$.

Assume $s \mid q^k+1$ and let $q^k+1=st$ where $t \geq 1$ is an integer, then $\beta=\frac {t} {st-1}$. Thus $\beta \leq \frac 1 3$ when $s \geq 5$.

Since $\dim_{\KK}(\bC_{V_{\KK}}(g)) \leq \sum_j \dim_{\KK}(\bC_{\overline{V}_j}(g))$, (2) holds.

(3) follows from Theorem ~\ref{Strofprimitive}(8).
\end{proof}

\begin{lemma} \label{basiccounting}
Assume $G$ satisfies Theorem ~\ref{Strofprimitive} and we adopt the notation in it. Let $p$ be a prime and $x \in \EP_p(A \backslash F)$ and assume $|\bC_{E/Z}(x)|=\prod_i {p_i}^{m_i}$. We have the following:
\begin{enumerate}
\item $\NEP_p(A \backslash F) \leq \NEP_p(A/F)|F|$.
\item $\NEP_p(A \backslash F) \leq \frac {\NEP_{p}(A/F)|F|} {\prod_{p_i \neq p} p_i}$.
\end{enumerate}
\end{lemma}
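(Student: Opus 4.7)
Part (1) is immediate: the quotient map $A \to A/F$ sends each $y \in \EP_p(A \setminus F)$ to some $\bar{y} \in \EP_p(A/F)$ (the image is nontrivial of order dividing $p$, hence of order exactly $p$), and each fibre is a single coset of $F$ of size $|F|$, giving the bound.

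For (2), I fix $\bar{x} \in \EP_p(A/F)$ together with a preimage $x \in A$ of order $p$, and aim to bound the number of order-$p$ elements in the coset $xF$ by $|F|/\prod_{p_i \neq p} p_i$; summing over cosets then gives the claim. By Theorem~\ref{Strofprimitive} every $f \in F$ can be written as $f = eu$ with $e \in E$, $u \in U$, and this parameterization is $|Z|$-to-$1$ since $E \cap U = Z$. Because $[E,U]=1$ and $x \in A = \bC_G(U)$, the elements $xe$ and $u$ commute, so $(xeu)^p = (xe)^p u^p$. Hence $xeu$ has order $p$ precisely when $(xe)^p \in Z$ and $u^p = (xe)^{-p}$.

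Passing to the abelian quotient $F/Z$, on which $x$ acts only through $E/Z$, the first condition reads $\bar{e} \in \ker N_x$ where $N_x = 1 + x + \cdots + x^{p-1}$. For each $p_i \neq p$, coprime action supplies the decomposition $E_i/Z_i = \bC_{E_i/Z_i}(x) \oplus [E_i/Z_i, x]$, on which $N_x$ is multiplication by $p$ (an isomorphism) on the fixed summand and zero on the other; hence $N_x$ has kernel of order $p_i^{2n_i - m_i}$ on $E_i/Z_i$. For the at most one $p_i = p$, I use the trivial bound $p^{2n_i}$ for this kernel. Cyclicity of $U$ provides at most $\gcd(p, |U|)$ solutions $u$ to the second condition.

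Accounting for the $|Z|$ overcount in the parameterization, the number of order-$p$ elements in $xF$ is at most $\gcd(p,|U|) \cdot \prod_{p_i \neq p} p_i^{2n_i - m_i} \cdot \prod_{p_i = p} p^{2n_i}$. Comparing with $|F|/\prod_{p_i \neq p} p_i = |U| \cdot \prod_{p_i \neq p} p_i^{2n_i - 1} \cdot \prod_{p_i = p} p^{2n_i}$, the desired inequality reduces to the arithmetic bound $|U| \geq \gcd(p, |U|) \cdot \prod_{p_i \neq p} p_i^{1 - m_i}$. The main obstacle is verifying this last step uniformly, particularly in the subtle case when $p$ divides $|U|$ but no extraspecial $E_i$ satisfies $p_i = p$: then $|Z|$ is coprime to $p$, so $p$ must divide $|U/Z|$ since $U \leq \bF(G)$, and combining this extra factor of $p$ with the baseline $|U| \geq |Z| = \prod_{i \leq s} p_i$ secures the inequality.
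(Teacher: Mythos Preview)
Your argument is correct. The paper does not actually give a proof here---it simply cites \cite[Lemma 2.7]{YY2}---so your detailed per-coset count is more than what appears in the text. The structure (reduce to bounding order-$p$ elements in a single coset $xF$, split $f=eu$, compute the norm map $N_x$ on $E/Z$ prime-by-prime, and handle $U$ via cyclicity) is exactly the sort of direct combinatorial argument one expects and is presumably what underlies the cited lemma.

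Two small comments. First, in your final case the phrase ``since $U\leq\bF(G)$'' is a red herring: the implication $p\mid|U|$, $\gcd(p,|Z|)=1$ $\Rightarrow$ $p\mid|U/Z|$ is pure arithmetic on orders (from $|U|=|U/Z|\cdot|Z|$), and then $|U|\geq p|Z|$ follows. No structural fact about $\bF(G)$ is needed. Second, your opening ``fix a preimage $x\in A$ of order $p$'' tacitly assumes the coset $\bar{x}F$ contains an order-$p$ element; if it does not, that coset contributes zero and the bound is vacuous, so this is harmless but worth saying explicitly.
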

\begin{proof}
This follows from \cite[Lemma 2.7]{YY2}.
\end{proof}

\begin{lemma} \label{topcounting}
Assume that $A$ is a normal subgroup of $G$ and $G/A$ is cyclic. Then we have $\NSP_{\pi_0}(G \backslash A) \leq \lfloor \log_5(|G/A|) \rfloor \cdot |A|$.
\end{lemma}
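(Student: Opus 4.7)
The plan is to translate everything into counting elements of prime order and then divide out the $p-1$ generators of each cyclic subgroup. Since $G/A$ is cyclic and we are restricting to primes $p \ge 5$, the structure of $G/A$ is extremely rigid on the prime-order elements, and this will give the bound almost immediately.

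First I would observe that if $x \in G \setminus A$ has prime order $p$, then $\langle x \rangle \cap A$ is a subgroup of $\langle x \rangle$ of order dividing $p$; it cannot equal $\langle x \rangle$ since $x \notin A$, so $\langle x \rangle \cap A = 1$. Hence every non-identity element of $\langle x \rangle$ lies in $G \setminus A$, and so each subgroup in $\SP_p(G \setminus A)$ contributes exactly $p-1$ elements to $\EP_p(G \setminus A)$. This gives
\[
\NSP_p(G \setminus A) \;=\; \frac{\NEP_p(G \setminus A)}{p-1}.
\]

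Next, for a fixed prime $p \ge 5$, I would count $\NEP_p(G \setminus A)$ by projecting to $G/A$. If $x \in G \setminus A$ has order $p$, then $xA$ has order exactly $p$ in $G/A$, so in particular $p \mid |G/A|$. Because $G/A$ is cyclic, it contains exactly $p-1$ elements of order $p$ when $p \mid |G/A|$ (and none otherwise). Each fiber of $G \to G/A$ has size $|A|$, so
\[
\NEP_p(G \setminus A) \;\le\; (p-1)\,|A|,
\]
and therefore $\NSP_p(G \setminus A) \le |A|$ for every prime $p \ge 5$ dividing $|G/A|$ (and it is $0$ for primes $p \ge 5$ not dividing $|G/A|$).

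Finally, summing over the relevant primes,
\[
\NSP_{\pi_0}(G \setminus A) \;=\; \sum_{\substack{p \in \pi_0\\ p \mid |G/A|}} \NSP_p(G \setminus A) \;\le\; k \cdot |A|,
\]
where $k$ is the number of distinct primes $p \ge 5$ dividing $|G/A|$. Writing these primes as $p_1 < \dots < p_k$, each is at least $5$, so $5^k \le p_1 \cdots p_k \le |G/A|$, giving $k \le \log_5 |G/A|$ and hence $k \le \lfloor \log_5 |G/A| \rfloor$ since $k$ is an integer. This yields the claimed inequality. There is no real obstacle; the only thing to be careful about is the trivial but crucial fact that a subgroup of prime order is either contained in $A$ or meets $A$ trivially, which is what allows the clean division by $p-1$.
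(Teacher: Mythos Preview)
Your proof is correct and follows essentially the same approach as the paper: bound $\NEP_p(G\setminus A)$ by $(p-1)|A|$ using that $G/A$ is cyclic, divide by $p-1$ to pass to $\NSP_p$, sum over the primes $p\in\pi_0$ dividing $|G/A|$, and note that the number of such primes is at most $\lfloor \log_5|G/A|\rfloor$. Your write-up simply makes explicit the two steps (the trivial-intersection observation and the final $5^k\le |G/A|$ bound) that the paper's one-line proof leaves implicit.
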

\begin{proof}
Since $G/A$ is cyclic, \[\NSP_{\pi_0}(G \backslash A) \leq \sum_{p \in \pi_0, p \mid |G/A|} (p-1) \cdot |A|/(p-1) = \sum_{p \in \pi_0, p \mid |G/A|} |A| \leq \log_5(|G/A|) \rfloor \cdot |A|.\]
\end{proof}

\begin{lemma}  \label{SpClifford}
Let $V$ be a symplectic vector space of dimension $n$ with base field $\FF$ and $G \in \SIRSp(n,\FF)$. Assume further the action is not quasi-primitive and that $N \triangleleft G$ is maximal such that $V_N$ is not homogeneous. Let $V_N=V_1 \oplus \cdots \oplus V_t$ where $V_i$'s are the homogeneous $N$-modules and clearly $t \geq 2$. Then either all $V_i$ are non-singular or all are totally isotropic. In the first case, $\dim(V_i)$ is even, $G \lesssim H \wr S$ as linear groups where $H \in \SIRSp(V_1)$. In the second case $t=2$, $V_2$ is isomorphic to $V_1^*$ as an $N$-module, and we say that $V_N$ is a pair.
\end{lemma}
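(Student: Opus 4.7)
My plan is to exploit the $G$-invariant non-degenerate symplectic form on $V$ to analyze the $N$-isotypic decomposition $V_N=V_1\oplus\cdots\oplus V_t$, and then to use the maximality of $N$ to rule out $t>2$ in the totally isotropic case. The routine parts are the radical/perp calculation and the standard imprimitive Clifford package; the main obstacle is the $t=2$ assertion, which I handle via a socle-of-$G/N$ argument.

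\medskip

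For each $i$ let $R_i=V_i\cap V_i^\perp$ and put $R=\sum_i R_i\le V$. Because $G$ permutes the $V_i$ and preserves the form, it permutes the $R_i$, so $R$ is $G$-invariant. The sum is direct since $R_i\subseteq V_i$ and the $V_i$ are independent, so $G$-irreducibility of $V$ gives the dichotomy: either every $R_i=0$ (all $V_i$ non-singular) or every $R_i=V_i$ (all $V_i$ totally isotropic). In the non-singular case the restriction of the form to each $V_i$ is non-degenerate alternating, forcing $\dim V_i$ to be even; Clifford gives that $\Stab_G(V_1)$ is irreducible on $V_1$ and preserves the restricted form, so its image $H$ in $\Sp(V_1)$ lies in $\SIRSp(V_1)$ and $G\lesssim H\wr S_t$ via the standard imprimitive embedding.

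\medskip

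In the totally isotropic case, the form gives an $N$-equivariant surjection $V\to V_i^*$ with kernel $V_i^\perp$, so $V/V_i^\perp\cong V_i^*$ as $N$-modules. Since $V_i\subseteq V_i^\perp$, the image in the quotient comes entirely from $\bigoplus_{j\neq i} V_j/(V_j\cap V_i^\perp)$, whose isotypic type must be $V_i^*$; hence there is a unique $V_j$ with $V_j\cong V_i^*$ as an $N$-module, call it $V_{i'}$, and total isotropy gives $V_{i'}\neq V_i$. This defines a fixed-point-free $G$-equivariant involution $i\mapsto i'$ on $\{1,\dots,t\}$, with each $V_i\oplus V_{i'}$ a hyperbolic symplectic subspace; in particular, once $t=2$ is established, $V_2\cong V_1^*$ is immediate.

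\medskip

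To see $t=2$, let $K\triangleleft G$ be the kernel of $G$'s action on the set of pairs $\{V_i\oplus V_{i'}\}$, so $K\supseteq N$. First I observe that for any $M\triangleleft G$ with $M>N$, maximality forces $V_M$ to be homogeneous, and by Clifford this implies $M$ is transitive on $\{V_i\}$ (distinct $M$-orbits on the $V_i$ would produce non-isomorphic $M$-isotypic constituents of $V$). If $K>N$, then $K$ is transitive on $\{V_i\}$, but $K$ fixes every pair, so $K$-orbits have size at most $2$, forcing $t=2$. Otherwise $K=N$ and $G/N$ acts faithfully on pairs; since $V_N$ is not homogeneous we have $G\neq N$, so $G/N\neq 1$, and, being solvable, $G/N$ has a non-trivial elementary abelian socle $A$. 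Applying maximality to the $G$-preimage of $A$ yields that $A$ is transitive on $\{V_i\}$, which combined with abelianness makes $A$ regular of order $t$. The unique $a\in A$ with $aV_1=V_{1'}$ lies in the stabilizer in $A$ of the pair $\{V_1,V_{1'}\}$; since $A$ is abelian and transitive on pairs, all pair-stabilizers in $A$ coincide, so $a$ lies in the kernel of $A$'s action on pairs, which is contained in $K/N=1$. Thus $a=e$ and $V_1=V_{1'}$, a contradiction unless $t=2$.
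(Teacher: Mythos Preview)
Your proof is correct and tracks the paper closely for the dichotomy and for the perp/duality identification in the totally isotropic case. The paper obtains the dichotomy by noting that the radical of $V_1$ is an $I$-submodule of the $I$-irreducible module $V_1$ (where $I=\bN_G(V_1)$), whereas you sum the radicals and use $G$-irreducibility of $V$; these are equivalent.

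The one genuine divergence is the argument for $t=2$. The paper invokes the standard fact (Manz--Wolf, Proposition~0.2) that, by maximality of $N$, the quotient $S=G/N$ permutes $\{V_1,\dots,V_t\}$ \emph{primitively}; since the pairing $\pi:i\mapsto i'$ commutes with $S$, its orbits form a nontrivial $S$-invariant block system, and primitivity forces a single block, i.e.\ $t=2$. Your kernel/socle argument instead reproves precisely the fragment of primitivity that is needed (any normal subgroup of $G$ strictly containing $N$ is transitive on the $V_i$) and then uses abelianness of the socle together with $G$-equivariance of the pairing to force the contradiction. This avoids the external citation at the cost of a longer case split; the paper's route is a two-line appeal to primitivity. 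One small slip: the socle of a solvable group is abelian but need not be \emph{elementary} abelian (it may involve several primes). Since you only use that $A$ is abelian and normal, this is harmless; alternatively, just take $A$ to be a minimal normal subgroup of $G/N$.
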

\begin{proof}
$V$ is a symplectic $G$-module with respect to the non-singular symplectic form $(\ ,\ )$. By \cite[Proposition 0.2]{manz/wolf}, $S=G/N$ faithfully and primitively permutes the homogeneous components of $V_N$. Set $I=\bN_G(V_1)$, by Clifford's Theorem, $V_1$ is an irreducible $I$-module. Since the form $(\ ,\ )$ is $G$-invariant, the subspace $\{v\in V_1\ |\ (v,v')=0$ for all $v' \in V_1\}$  is an $I$-submodule of $V_1$ and the form $(\ ,\ )$ is either totally isotropic or non-singular on $V_1$. Since $G$ transitively permutes the $V_i$, the $G$-invariant form $(\ ,\ )$ is simultaneously totally isotropic or non-singular on all the $V_i$. If the form is non-singular on each $V_j$, then let $H=\bN_G(V_1)/\bC_G(V_1)$ and we know $H \in \SIRSp(V_1)$, $G \lesssim H \wr S$ as linear groups. Hence, we assume that each $V_i$ is totally isotropic.

Let $j \in \{ 1,\ldots,t\}$. Then, set $V_j^{\perp}=\{ v \in V\ |\ (v_,v_j)=0$ for all $v_j \in V_j \}$. For $v \in V$, we consider the map $f_v \in V_j^*:=\Hom_{\FF}(V_j,\FF)$, defined by $f_v(v_j)=(v,v_j), v_j \in V_j$. Then $v \mapsto f_v, v \in V$, induces a $N$-isomorphism between $V/V_j^{\perp}$ and the dual space $V_j^*$. Since $V_N$ is completely reducible, there exists an $N$-module $U_j$ such that $V_N=V_j^{\perp} \oplus U_j$. Thus $U_j \cong V_j^*$ is homogeneous and $U_j=V_{\pi(j)}$ for a permutation $\pi \in S_t$. Then $\pi$ is an
involution in $S_t$ and the permutation action of $S$ commutes with $\pi$. Hence, $S$ acts on the orbits of $\pi$. Since $\pi$ is not the identity, and the action of $S$ is primitive, it follows that $\pi$ has a single orbit and $t = 2$.
\end{proof}

\begin{lemma}  \label{OrderSpGamma}
Let $V$ be a symplectic vector space of dimension $2n$ with base field $\FF$ and $G \in \SIRSp(2n,\FF)$, $|\FF|=p$ where $p$ is a prime. Assume $G$ acts irreducibly and quasi-primitively on $V$ and $e=1$, then we have the following:
\begin{enumerate}
\item $G \lesssim \Gamma(p^{2n})$. $G/U$ is cyclic and $|G/U| \mid 2n$.
\item $U \lesssim \Gamma_0(p^{2n})$ and $|U| \mid p^n+1$.
\end{enumerate}
\end{lemma}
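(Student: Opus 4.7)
The plan is to use the hypothesis $e=1$ to collapse the structure given by Theorem~\ref{Strofprimitive} to a single cyclic piece, then identify $V$ as a one-dimensional space over an extension field of $\FF_p$ for part~(1), and finally exploit the symplectic form to constrain $|U|$ for part~(2). I begin by noting that $e=1$ forces $E=Z$, so $F=EU=U$ is cyclic; Theorem~\ref{Strofprimitive}(4) then makes $A/F$ act faithfully on the trivial group $E/Z$, so $A=F=U$ and $\bC_G(U)=U$. Theorem~\ref{Strofprimitive}(9) gives $G/U$ cyclic.

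For part~(1), quasi-primitivity yields $V_U\cong W^b$ homogeneous. Since $U$ is cyclic, $K:=\FF_p[U]/\operatorname{ann}_{\FF_p[U]}(W)$ is a finite field, $V$ becomes a $K$-vector space of dimension $b$, and $U$ embeds into $K^\times$ generating $K$ over $\FF_p$. Because $G$ normalizes $U$ it acts on $K$ by field automorphisms with kernel $\bC_G(U)=U$, so $G/U$ embeds faithfully into $\Gal(K/\FF_p)$. The image of $\FF_p[G]$ in $\End_{\FF_p}(V)$ is then the crossed product $K*_{\sigma}(G/U)$, which by classical Galois descent is isomorphic to $M_{|G/U|}(K^{G/U})$; its unique simple module has $\FF_p$-dimension $[K:\FF_p]$. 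Comparing with $\dim_{\FF_p}V=2n$ forces $[K:\FF_p]=2n$ and $b=1$. Hence $V\cong K\cong\FF_{p^{2n}}$, and $G$ embeds in $\Gamma(p^{2n})$ with $U=G\cap\Gamma_0(p^{2n})$ and $G/U$ cyclic of order dividing $2n$.

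For part~(2), since $U$ generates $K$ as an $\FF_p$-algebra, the multiplicative order of $p$ modulo $m:=|U|$ equals $[K:\FF_p]=2n$. I would write the $G$-invariant symplectic form as $\omega(u,v)=\text{Tr}_{K/\FF_p}(u\phi(v))$ with $\phi$ an $\FF_p$-linear bijection on $K$, and expand $\phi$ as a linearized polynomial $\phi(v)=\sum_{i=0}^{2n-1}c_iv^{p^i}$. The $U$-invariance $\omega(au,av)=\omega(u,v)$, combined with the linear independence of Frobenius powers and the non-degeneracy of the trace, yields $c_i(a^{p^i+1}-1)=0$ for all $i$ and all $a\in U$; hence $c_i\ne 0$ forces $m\mid p^i+1$. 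Non-degeneracy of $\omega$ guarantees some $c_i\ne 0$, so $p^i\equiv -1\pmod m$ for some $i\in\{0,\dots,2n-1\}$. Squaring gives $p^{2i}\equiv 1\pmod m$ and hence $2n\mid 2i$, so $i\in\{0,n\}$; the case $i=0$ would give $m\mid 2$, contradicting the fact that $p$ has order $2n\ge 2$ modulo $m$ (which forces $m\ge 3$). Therefore $i=n$ and $m\mid p^n+1$.

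The main technical obstacles are (a) the multiplicity step $b=1$ in part~(1), which requires recognizing the image algebra as a full matrix ring over the fixed subfield $K^{G/U}$ via Galois descent, and (b) the $p$-polynomial computation in part~(2), where non-degeneracy of the form together with the precise order of $p$ modulo $|U|$ conspires to single out $i=n$.
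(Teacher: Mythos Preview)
Your argument is correct and considerably more self-contained than the paper's proof, which simply invokes \cite[Proposition~3.1(1)]{Turull} to obtain both conclusions at once.

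For part~(1), the paper's route via Turull's proposition black-boxes exactly the step you work out by hand: that when $e=1$ the structure theorem collapses to $A=F=U$ with $\bC_G(U)=U$, and that the resulting crossed product $K*_{\sigma}(G/U)$ is a full matrix ring over the fixed field (here because the Brauer group of a finite field is trivial), forcing $b=1$. Your justification of $b=1$ is sound; the key inputs are that $G/U$ acts \emph{faithfully} on $K$ (so the crossed product is central simple over $K^{G/U}$) and that $V$ is already simple over $\FF_p[G]$ (so $V$ must be the unique simple module of that matrix ring). It might be worth saying explicitly that ``Galois descent'' is being used in the form ``$\mathrm{Br}(L)=0$ for $L$ finite'', since over non-finite fields the crossed product could be a division algebra.

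For part~(2), Turull's argument in the cited reference also passes through the trace form on $\FF_{p^{2n}}$, but your linearized-polynomial computation is a clean direct version. The crucial point you use---that $\mathrm{ord}_m(p)=2n$ because $U$ generates $K$ over $\FF_p$ and $[K:\FF_p]=2n$---is exactly what makes the elimination of $i=0$ work and singles out $i=n$. This is correct as stated.

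In short: the paper cites an external structural result, while you reprove it from Theorem~\ref{Strofprimitive} plus standard facts about crossed products and linearized polynomials. Your approach buys independence from \cite{Turull} at the cost of length; the paper's buys brevity at the cost of an external dependency.
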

\begin{proof}
By \cite[Proposition 3.1(1)]{Turull}, $G$ may be identified with a subgroup of the semi-direct product of $\GF(p^{2n})^{\times}$ by $\Gal(\GF(p^{2n}):\GF(p))$ acting in a natural manner on $\GF(p^{2n})^+$. Also $G \cap \GF(p^{2n})^{\times}=U$ and $|G \cap \GF(p^{2n})^{\times}| \mid p^n+1$. Clearly $G/U$ is cyclic of order dividing $2n$. Now (1) and (2) hold.
\end{proof}

\begin{lemma}  \label{SGLbound}
Let $V$ be a finite, faithful irreducible $G$-module and $G$ is solvable. Suppose $V$ is a vector space of dimension $n$ over the field $\FF$.
Let $(n,\FF)=(4,\FF_2)$, then $|G| \leq 6^2 \cdot 2$ and $\NEP_{\{2,3\}'}(G) \leq 4$. Also $G_{\pi_0}$ is abelian and $G_{\pi_0} \subseteq \bF(G)$.
\end{lemma}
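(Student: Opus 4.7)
The plan is to split on whether $G$ acts quasi-primitively on $V$, applying Theorem~\ref{Strofprimitive} in one case and Clifford theory in the other, and then to compile the resulting bounds.

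In the quasi-primitive case, I would invoke Theorem~\ref{Strofprimitive} with characteristic $r=2$. Since $\gcd(r,e)=1$, the integer $e$ must be odd, and since $e \mid n = 4$, this forces $e=1$. Hence $E=Z$, $F=A=U$ is cyclic, and $G/U$ embeds in $\Aut(U)$. The cyclic group $U$ acts fixed-point-freely on an irreducible $\FF_2 U$-summand $W$ of $V$ with $|V|=|W|^b$ and $b\cdot\dim W = 4$. A quick case check on $(b,\dim W) \in \{(1,4),(2,2),(4,1)\}$ then shows either $G$ is trivial or $G \leq \Gamma(2^4)$ of order $60$. Since $60 = 2^2\cdot 3\cdot 5$, the only prime in $\pi_0$ dividing $|G|$ is $5$; writing $\Gamma(2^4) = U \rtimes \langle \mathrm{Frob} \rangle$ with $|U|=15$ and $\mathrm{Frob}$ of order $4$, a short computation shows that an element $u\,\mathrm{Frob}^k$ of order $5$ forces $4 \mid 5k$, hence $k=0$, so every order-$5$ element lies in $U$. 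There are exactly $4$ such, so $\NEP_{\pi_0}(G) \leq 4$, and $G_{\pi_0}$ sits inside the cyclic normal subgroup $U$, hence is abelian and contained in $\bF(G)$.

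In the imprimitive case, pick $N \triangleleft G$ for which $V_N$ is not homogeneous; the homogeneous components $V_1,\ldots,V_t$ form a $G$-system of imprimitivity of equal dimensions. Hence $(t,\dim V_i) \in \{(2,2),(4,1)\}$. In the first case $G$ embeds into $\GL(V_1) \wr S_2 \cong S_3 \wr S_2$ of order $72 = 6^2 \cdot 2$; in the second into $\GL(1,2) \wr S_4 \cong S_4$ of order $24$. In either subcase $|G|$ divides $72 = 2^3\cdot 3^2$, which is coprime to every prime in $\pi_0$, so $\NEP_{\pi_0}(G)=0$ and $G_{\pi_0}=1$, trivially abelian and in $\bF(G)$.

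Combining the two cases yields $|G| \leq 72 = 6^2 \cdot 2$, $\NEP_{\{2,3\}'}(G) \leq 4$, and $G_{\pi_0}$ abelian with $G_{\pi_0} \subseteq \bF(G)$. The main obstacle is the quasi-primitive case: one must carefully use the structure theorem to pin $G$ inside the semi-linear group via the $(b,\dim W)$ case analysis, and then count elements of order $5$ precisely. The imprimitive case reduces to a routine wreath-product size bookkeeping.
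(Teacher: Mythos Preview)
Your proposal is correct and follows essentially the same approach as the paper: split into the imprimitive case (yielding $G \lesssim S_3 \wr S_2$, a $\{2,3\}$-group) and the quasi-primitive case (where $e=1$ forces $G \lesssim \Gamma(2^4)$, so $|G|\mid 60$ and $G_{\pi_0}\lesssim Z_5 \subseteq \bF(G)$). You simply fill in more detail than the paper does---the explicit $(b,\dim W)$ case check, the order-$5$ element count in $\Gamma(2^4)$, and the (harmless, in fact vacuous) $t=4$ imprimitive subcase---but the strategy is identical.
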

\begin{proof}
$G$ satisfies one of the following:
    \begin{enumerate}
    \item $G \lesssim S_3 \wr S_2$. Clearly $|G| \mid 6^2 \cdot 2$ and $G_{\pi_0}=1$.
    \item $V$ is irreducible and quasi-primitive, $e=1$ and $G \lesssim \Gamma(2^4)$. $|G|\mid 60$, $G_{\pi_0} \lesssim Z_5$ and $G_{\pi_0} \subseteq \bF(G)$. Clearly $\NEP_{\{2,3\}'}(G) \leq 4$.
    \end{enumerate}
Hence the result holds in all cases.
\end{proof}

\begin{lemma} \label{BdSCRSp}
Let $n$ be an even integer and $V$ be a symplectic vector space of dimension $n$ of field $\FF$. Let $G \in \SCRSp(n,\FF)$.
\begin{enumerate}
\item Let $(n,\FF)=(2,\FF_2)$, then $G \lesssim S_3$ and $|G| \mid 6$.
\item Let $(n,\FF)=(4,\FF_2)$, then $|G| \leq 6^2 \cdot 2$ and $\NEP_{\pi_0}(G) \leq 4$. Also $G_{\pi_0}$ is abelian and $G_{\pi_0} \subseteq \bF(G)$.
\item Let $(n,\FF)=(6,\FF_2)$, then $|G| \leq 6^4$, $\NEP_{\pi_0}(G) \leq 6$. Also $G_{\pi_0}$ is abelian and $G_{\pi_0} \subseteq \bF(G)$. 
\item Let $(n,\FF)=(8,\FF_2)$, then $|G| \leq 6^4 \cdot 24$ and $\NEP_{\pi_0}(G)\leq 24$.
\item Let $(n,\FF)=(2,\FF_3)$, then $G \lesssim \SL(2,3)$ and $|G| \mid 24$.
\item Let $(n,\FF)=(4,\FF_3)$, then $|G| \leq 24^2 \cdot 2$, $\NEP_5(G) \leq 64$ and $G$ has no elements with prime order $p \geq 7$. If $5 \mid |G|$, then $|G| \leq 320$.

\end{enumerate}
\end{lemma}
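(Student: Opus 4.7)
The plan is to handle each part of the lemma by a uniform Clifford-theoretic case analysis, relying on Lemma \ref{SpClifford} and Theorem \ref{Strofprimitive} as the principal structural tools. Since $G$ acts completely reducibly on the symplectic space $V$, we first decompose into $G$-invariant symplectic summands (or, when applicable, into a totally isotropic pair $V_1\oplus V_1^*$ as described in Lemma \ref{SpClifford}) and thus reduce to the irreducible case. On an irreducible summand, if $G$ is not quasi-primitive then Lemma \ref{SpClifford} places $G$ inside $H\wr S_t$ with $H$ acting on half the dimension, and we recurse on a smaller item; if $G$ is irreducible quasi-primitive, we use Theorem \ref{Strofprimitive} together with the numerical consequences in Lemma \ref{OrderG} and Lemma \ref{OrderSpGamma}.

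Items (1) and (5) are immediate from $\Sp(2,\FF_2)\cong S_3$ and $\Sp(2,\FF_3)=\SL(2,3)$, and they also seed the induction. For an irreducible quasi-primitive $G$ in the remaining items, Theorem \ref{Strofprimitive}(3) forces the extra-special parameter $e$ to divide $n$ and to be coprime to the characteristic of $V$, so the options are $e=1$ in cases (2) and (4), $e\in\{1,3\}$ in case (3), and $e\in\{1,2\}$ in case (6). When $e=1$, Lemma \ref{OrderSpGamma} gives $G\lesssim\Gamma(|\FF|^n)$ with $|G/U|\mid n$ and $|U|\mid|\FF|^{n/2}+1$; when $e>1$ the bound $|G|\leq\dim(W)\cdot|A/F|\cdot e^2\cdot(|W|-1)$ from Lemma \ref{OrderG}, together with the observation that $A/F\lesssim\Sp(2n_i,p_i)$ falls under an earlier item, yields the required estimate.

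The extremal reducible configurations that saturate the stated bounds are the iterated wreath products built from the base cases: $S_3\wr S_2$ of order $6^2\cdot 2$ in (2), $S_3\wr S_3$ of order $6^4$ in (3), $S_3\wr S_4$ of order $6^4\cdot 24$ in (4), and $\SL(2,3)\wr S_2$ of order $24^2\cdot 2$ in (6). The element counts $\NEP_{\pi_0}$ then propagate from the inductive items together with Lemma \ref{basiccounting} and Lemma \ref{topcounting}, and the totally isotropic pair case $G\lesssim\GL(V_1)\cdot 2$ coming from Lemma \ref{SpClifford} is what accounts for the prime $7$ for $\dim V=6$ through solvable subgroups of $\GL(3,\FF_2)$, hence the bound $\NEP_{\pi_0}(G)\leq 6$ rather than $4$. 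The assertion $G_{\pi_0}\subseteq\bF(G)$ and the abelianness of $G_{\pi_0}$ in items (2) and (3) follow because in those dimensions the only places $\pi_0$-elements can arise are the cyclic $U$ inside a semi-linear configuration or inside a $\GL(3,\FF_2)$ pair factor, both of which lie inside the Fitting subgroup or act trivially.

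The main obstacle will be the $(4,\FF_3)$ case, where $e=2$ is genuinely possible with $E$ extra-special of order $8$ and $A/F\lesssim\Sp(2,\FF_2)\cong S_3$, so several quasi-primitive configurations must be enumerated via Lemma \ref{OrderG}. Moreover a $5$-element must lie in the cyclic group $U$ of the semi-linear case $e=1$, where $|U|\mid 3^2+1=10$ and $G\lesssim\Gamma(3^4)$ is of order at most $320$. The statements $\NEP_5(G)\leq 64$, the non-existence of prime-order elements $\geq 7$, and the refined bound $|G|\leq 320$ whenever $5\mid|G|$ then follow by combining this structural information with Lemma \ref{basiccounting} and Lemma \ref{topcounting} applied to the few explicit configurations that admit $5$-elements, together with the observation that the wreath-extremal configuration $\SL(2,3)\wr S_2$ has order a $\{2,3\}$-number and therefore contributes nothing to the $5$-count.
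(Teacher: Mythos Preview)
Your overall strategy matches the paper's: reduce to irreducible summands, then split into the non-quasi-primitive wreath case via Lemma~\ref{SpClifford} and the quasi-primitive case via Theorem~\ref{Strofprimitive}. Items (1)--(5) go through essentially as you describe, and your identification of the extremal wreath configurations is correct.

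The genuine gap is in item (6). For $(4,\FF_3)$ the constraint is $e\mid 4$ with $\gcd(3,e)=1$, so $e\in\{1,2,4\}$, not $\{1,2\}$; you have omitted $e=4$. When $e=4$ one has $|W|=3$, $E$ extra-special of order $32$, and $A/F$ a solvable subgroup of $\Sp(4,2)$ acting on $E/Z$. This configuration is precisely the maximal subgroup $M_5=(D_8\Ydown Q_8).A_5$ of $\Sp(4,3)$, and a solvable $G$ inside it has $G\cong(D_8\Ydown Q_8).L$ with $L\in\{S_3,A_4,D_{10}\}$. In particular $L=D_{10}$ gives $|G|=320$ with $5\mid|G|$, so your claim that ``a $5$-element must lie in the cyclic group $U$ of the semi-linear case $e=1$'' is false: $5$-elements also occur in $A/F$ when $e=4$. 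The refined bound $|G|\leq 320$ for $5\mid|G|$ happens to survive, but for the wrong reason.

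Relatedly, your appeal to Lemmas~\ref{basiccounting} and~\ref{topcounting} for the bound $\NEP_5(G)\leq 64$ is not what the paper does and is not obviously sufficient: those lemmas bound $\NEP_p(A\setminus F)$ and $\NSP_{\pi_0}(G\setminus A)$ in terms of quantities like $|F|$ and $|A|$, which in the $e=4$ configuration are not small enough to yield $64$ by a crude estimate. The paper instead abandons the $e$-by-$e$ analysis for the quasi-primitive part of (6), lists the maximal subgroups $M_1,\dots,M_5$ of $\Sp(4,3)$ directly, and verifies $\NEP_5(G)\leq 64$ by a GAP computation over the finitely many quasi-primitive $G$ with $|G|\leq 384$. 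If you want to avoid GAP you would need a sharper hand count inside $(D_8\Ydown Q_8).D_{10}$, which is feasible but is an additional argument you have not supplied.
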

\begin{proof}
$V$ is a symplectic $G$-module with respect to the non-singular symplectic form $(\ ,\ )$. If $V$ is not irreducible, we may choose an irreducible submodule $W$ of $V$ of smallest possible dimension and set $\dim(W)=m$. Since the form $(\ ,\ )$ is $G$-invariant, the subspace $\{v\in W \ |\ (v,v')=0$ for all $v' \in W\}$ is a submodule of $W$ and the form $(\ ,\ )$ is either totally isotropic or non-singular on $W$. If the form $(\ ,\ )$ is totally isotropic on $W$, then set $W^{\perp}=\{ v \in V \ |\ (v_,w)=0$ for all $w \in W \}$. For $v \in V$, we consider the map $f_v \in W^* := \Hom_{\FF}(W, \FF)$, defined by $f_v(w)=(v,w), w \in W$. Then $v \mapsto f_v, v \in V$, induces a $G$-isomorphism between $V/W^{\perp}$ and the dual space $W^*$. Since $V$ is completely reducible, we may find an irreducible $G$-submodule $U \cong W^*$ such that the form is non-singular on $X=W \oplus U$. If the form $(\ ,\ )$ is totally isotropic on $W$ and $n>2m$, then $V=X \oplus X^{\perp}$ where $X^{\perp}$ is not trivial. If the form $(\ ,\ )$ is non-singular on $W$, then $V=W \oplus W^{\perp}$ where $W^{\perp}$ is not trivial. In both cases we may view $G \lesssim \SCRSp(V_1) \times \SCRSp(V_2)$ as linear groups where $\dim(V_1)$, $\dim(V_2) <n$. If the form $(\ ,\ )$ is totally isotropic on $W$ and $n=2m$, then $V=W \oplus U$ and the action of $G$ on $V$ is a pair. We use this fact in the following arguments.

 We prove these different cases one by one.
\begin{enumerate}
\item Let $(n,\FF)=(2,\FF_2)$. Then $G \lesssim S_3$ and $|G| \mid 6$.
\item Let $(n,\FF)=(4,\FF_2)$. Assume $V$ is irreducible, then the result follows from Lemma ~\ref{SGLbound}.
Assume $V$ is reducible, then $G \lesssim S_3 \times S_3$, $|G| \mid 6^2$ and $G_{\pi_0}=1$.

\item Let $(n,\FF)=(6,\FF_2)$. Assume $V$ is irreducible and not quasi-primitive, then by Lemma ~\ref{SpClifford} $G$ satisfies one of the following:
\begin{enumerate}
\item $t=2$ and $\dim(V_1)=3$, the action of $N$ on $V$ must be a pair. Thus $N \lesssim \Gamma(2^3)$ and $\bF(N) \cong Z_7$. $G_{\pi_0} \cong Z_7$, $|G| \leq 21 \cdot 2=42$ and $G_{\pi_0} \subseteq \bF(G)$.
\item $t=3$ and $G \lesssim S_3 \wr S_3$. Thus $|G| \mid 6^4$ and $G_{\pi_0}=1$.
\end{enumerate}
Assume $V$ is irreducible and quasi-primitive, then $G$ satisfies one of the following:
\begin{enumerate}
\item $e=1$ and $|G| \leq (2^3+1) \cdot 6=54$ by Lemma ~\ref{OrderSpGamma}. Clearly $G_{\pi_0}=1$.

\item $e=3$. By Theorem ~\ref{Strofprimitive}, $A/F \lesssim \SL(2,3)$, $|W|=2^2$ and $\dim(W) \mid 2$. Thus $|U|=3$ and $|G/A| \mid 2$. $|G| \mid 2 \cdot 24 \cdot 3^3$ by Lemma ~\ref{OrderG}. Clearly $G_{\pi_0}=1$.
\end{enumerate}

Assume $V$ is reducible, then $G$ satisfies one of the following:
\begin{enumerate}
\item $G \lesssim \GL(3,2) \times \GL(3,2)$ and the action of $G$ on $V$ is a pair. Thus $G \lesssim \Gamma(2^3)$, $G_{\pi_0} \cong Z_7$ and $G_{\pi_0} \subseteq \bF(G)$. Thus $|G|\leq 21$ and $\NEP_{\{2,3\}'}(G) \leq 6$.
\item $G \in \SCRSp(2,2) \times \SCRSp(4,2)$. Thus $|G|\leq 6^3 \cdot 2$, $\NEP_{\{2,3\}'}(G) \leq 4$, $G_{\pi_0}$ is abelian and $G_{\pi_0} \subseteq \bF(G)$ by (1) and (2).
\end{enumerate}

Hence the result holds in all cases.

\item Let $(n,\FF)=(8,\FF_2)$. Assume $V$ is irreducible and not quasi-primitive, then by Lemma ~\ref{SpClifford} $G$ satisfies one of the following:
\begin{enumerate}
\item $t=2$ and $G \lesssim H \wr S_2$ where $H$ is an irreducible subgroup of $\GL(4,2)$. By Lemma ~\ref{SGLbound}, $|G| \leq (6^2 \cdot 2)^2 \cdot 2$ and $\NEP_{\{2,3\}'}(G) \leq 5 \cdot 5-1=24$.
\item $t=4$ and $G \lesssim S_3 \wr S_4$. Clearly $|G| \mid 6^4 \cdot 24$ and $G_{\pi_0}=1$.
\end{enumerate}
 Assume $V$ is irreducible and quasi-primitive. Since $2 \nmid e \mid 8$, $e=1$. By Lemma ~\ref{OrderSpGamma}, $|G| \mid (2^4+1) \cdot 8=136$, $G_{\pi_0} \cong Z_{17}$ and $\NEP_{\{2,3\}'}(G) \leq 16$.

Assume $V$ is reducible, then $G$ satisfies one of the following:
\begin{enumerate}
\item $G \lesssim \GL(4,2) \times \GL(4,2)$ and the action of $G$ on $V$ is a pair. Thus by Lemma ~\ref{SGLbound}, $|G| \leq 6^2 \cdot 2$ and $\NEP_{\{2,3\}'}(G) \leq 4$.
 \item $G \in \SCRSp(4,2) \times \SCRSp(4,2)$. Thus $|G| \leq 6^4 \cdot 4$, $\NEP_{\{2,3\}'}(G) \leq 5 \cdot 5-1=24$ by (2).

 \item $G \in \SCRSp(2,2) \times \SCRSp(6,2)$. Thus $|G| \leq 6^5$, $\NEP_{\{2,3\}'}(G) \leq 1 \cdot 7-1=6$ by (1) and (3).

\end{enumerate}

Hence the result holds in all cases.

\item Let $(n,\FF)=(2,\FF_3)$, then $G \lesssim \Sp(2,3) \cong \SL(2,3)$ and $|G| \mid 24$.

\item Let $(n,\FF)=(4,\FF_3)$. Assume $V$ is irreducible and not quasi-primitive, then by Lemma ~\ref{SpClifford} $G$ satisfies one of the following:
\begin{enumerate}
\item $t=2$ and the form is totally isotropic on $V_1$, the action of $N$ on $V$ is a pair. $|N| \mid |\GL(2,3)|=48$ and $|G| \mid 96$.

\item $t=2$ and the form is non-singular on $V_1$, $G \lesssim \SL(2,3) \wr S_2$. Thus $|G| \mid 24^2 \cdot 2$ and $G_{\pi_0}=1$.
\end{enumerate}
Assume further the action of $G$ on $V$ is quasi-primitive. It is well known that a maximal subgroup of $\Sp(4,3)$ is isomorphic to one of the five groups $M_1$,$M_2$,$M_3$,$M_4$ and $M_5$, where $M_1 \cong \SL(2,3) \wr S_2$ and $|M_2|=|M_3|=2^4 \cdot 3^4$, $|O_3(M_2)|=|O_3(M_3)|=3^3$, $M_4=2.S_6$, $M_5=(D_8 \Ydown Q_8).A_5$. We can hence assume $G$ is maximal among the solvable subgroups of $M_2, M_3, M_4$ or $M_5$. Assume $G$ is a subgroup of $M_2$ or $M_3$, then clearly $|G| \mid 48$. Assume $G$ is a subgroup of $M_4$, it is not hard to show that $|G| \mid 96$ or $|G| \mid 40$. Assume $G$ is a subgroup of $M_5$, then $G \cong (D_8 \Ydown Q_8).L$ where $L$ is $S_3$, $A_4$ or $F_{10}$ and thus we know $|G| \leq 384$ and $|G| \leq 320$ if $5 \mid |G|$. It is checked by GAP ~\cite{GAP} that for all $G$ quasi-primitive and $|G| \leq 384$, $\NEP_5(G) \leq 64$ and $G$ will have no elements with other prime order.

Assume $V$ is reducible, then $G$ satisfies one of the following:
\begin{enumerate}
\item $G \lesssim \GL(2,3) \times \GL(2,3)$ and the action of $G$ on $V$ is a pair. Thus $|G|\mid 48$ and $G_{\pi_0}=1$.
\item $G \lesssim \SL(2,3) \times \SL(2,3)$. Thus $|G| \mid 24^2$ and $G_{\pi_0}=1$.
\end{enumerate}
Hence the result holds in all cases.
\end{enumerate}
\end{proof}

\begin{lemma}\label{permutation23}
Let $G$ be a finite solvable group on a finite set $\Omega$. Then there exists a subset $\Delta \subseteq \Omega$ such that $\Stab_G(\Delta)$ is a $\{2,3\}$-group. Here, $\Delta$ can be chosen to have non-empty intersection with every orbit of $G$ on $\Omega$.
\end{lemma}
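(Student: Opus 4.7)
My plan is to induct on $|G|$ after reducing to the faithful transitive case. For the reduction, if $\Omega = \bigsqcup_i \Omega_i$ is the $G$-orbit decomposition and $\Delta = \bigsqcup_i \Delta_i$ with $\Delta_i = \Delta \cap \Omega_i$, then $\Stab_G(\Delta) = \bigcap_i \Stab_G(\Delta_i)$ by $G$-stability of the orbits, so it suffices to produce the desired subset in a single orbit and then append any non-empty subset of each remaining orbit. Replacing $G$ by its image in $\Sym(\Omega)$, I take the action to be faithful and transitive.

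For the induction, if $G$ is already a $\{2,3\}$-group take $\Delta = \Omega$; otherwise let $N$ be a minimal normal subgroup, which is elementary abelian of prime order $q$. By minimality of $N$ together with faithfulness of $G$, the subgroup $N$ also acts faithfully on $\Omega$. When $q \in \{2,3\}$ I apply the inductive hypothesis to $G/N$ on $\bar{\Omega} = \Omega/N$ to get $\bar\Delta$ meeting every $G/N$-orbit with $\Stab_{G/N}(\bar\Delta)$ a $\{2,3\}$-group; the lift $\Delta = \bigsqcup_{O \in \bar\Delta} O$ then satisfies $N \leq \Stab_G(\Delta)$ and $\Stab_G(\Delta)/N = \Stab_{G/N}(\bar\Delta)$, an extension of $\{2,3\}$-groups by a $\{2,3\}$-group.

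When $q \geq 5$, the subgroup $N$ is an abelian $\{2,3\}'$-group acting faithfully on $\Omega$, and $N/N_O$ acts regularly on each $N$-orbit $O$ (where $N_O$ is the common point stabilizer). I seek $\Delta \subseteq \Omega$ satisfying (i) $\Stab_N(\Delta) = 1$ and (ii) the set $\bar\Delta$ of $N$-orbits meeting $\Delta$ has $\Stab_{G/N}(\bar\Delta)$ a $\{2,3\}$-group. The key observation is that given both conditions, any $g \in \Stab_G(\Delta)$ satisfies $\langle g \rangle \cap N \leq \Stab_G(\Delta) \cap N = 1$, so $|g| = |\bar g|$, and since $\bar g \in \Stab_{G/N}(\bar\Delta)$ has $\{2,3\}$-order, so does $g$; hence $\Stab_G(\Delta)$ is a $\{2,3\}$-group. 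Condition (ii) comes from applying the inductive hypothesis to $G/N$ on $\bar\Omega$, and for (i) I take $\Delta$ to be a single point $\omega_O$ in each orbit $O \in \bar\Delta$, reducing (i) to the requirement $\bigcap_{O \in \bar\Delta} N_O = 1$.

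The main obstacle is ensuring this last intersection is trivial: while $\bigcap_{O \in \bar\Omega} N_O = 1$ by faithfulness of $N$, the $\bar\Delta$ produced by the inductive hypothesis may be too small. The resolution exploits the $G$-conjugacy of the subgroups $N_O$ within a single $G/N$-orbit of $\bar\Omega$: their common $G$-core inside $N$ is a $G$-normal subgroup of the minimal normal $N$, hence trivial (since $N$ itself acts faithfully). Using this, one enlarges $\bar\Delta$ by carefully appending further $G$-translates of its orbits, chosen so that the $\{2,3\}$-group setwise stabilizer is preserved, until $\bigcap_{O \in \bar\Delta} N_O$ collapses to $1$. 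Finally, once any $\Delta$ with $\Stab_G(\Delta)$ a $\{2,3\}$-group has been obtained, it is enlarged to meet every $G$-orbit by successively adjoining a single $\omega$ from each missed orbit: this preserves the $\{2,3\}$-group property via $\Stab_G(\Delta \cup \{\omega\}) \subseteq \Stab_G(\Delta) \cap G_\omega$, so a maximal such $\Delta$ meets every $G$-orbit as required.
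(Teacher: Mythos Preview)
The paper does not prove this lemma; it simply cites \cite[Corollary~5.7(a)]{manz/wolf}. Your proposal is an attempt at a self-contained proof, so there is nothing in the paper to compare it against directly.

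Your reduction to the transitive case is not correct as written. After replacing $G$ by its image in $\Sym(\Omega)$, the action of $G$ on a single orbit $\Omega_1$ need not be faithful: the kernel $K_1$ of $G$ on $\Omega_1$ may fail to be a $\{2,3\}$-group, in which case \emph{no} subset $\Delta_1 \subseteq \Omega_1$ has $\Stab_G(\Delta_1)$ a $\{2,3\}$-group (since $K_1 \leq \Stab_G(\Delta_1)$ always). The correct reduction applies the statement inductively to each $G/K_i$ on $\Omega_i$, then uses that $\bigcap_i \Stab_G(\Delta_i)$ embeds, modulo the trivial intersection $\bigcap_i K_i$, into the $\{2,3\}$-group $\prod_i \Stab_G(\Delta_i)/K_i$. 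This is easily repaired.

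The real gap is in the case $q \geq 5$. You correctly isolate the obstacle --- the inductively produced $\bar\Delta$ may have $\bigcap_{O \in \bar\Delta} N_O \neq 1$ --- but the proposed resolution (``enlarge $\bar\Delta$ by carefully appending further $G$-translates \ldots\ chosen so that the $\{2,3\}$-group setwise stabilizer is preserved'') is an assertion, not an argument. Adjoining a translate $g\bar\Delta$ to $\bar\Delta$ does not, in general, keep the setwise stabilizer inside a $\{2,3\}$-group: elements interchanging $\bar\Delta$ and $g\bar\Delta$, or mixing them when they overlap, can enter the stabilizer of the union, and you give no mechanism to prevent this. Knowing that the $G$-core $\bigcap_g N_O^g$ is trivial only says that \emph{sufficiently many} translates kill the intersection; it says nothing about compatibility with the stabilizer condition. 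This is precisely the crux of the lemma, and it is left unproved. The standard argument (as in Manz--Wolf) does not proceed by such an enlargement; it passes to primitive permutation groups, where the unique minimal normal subgroup is regular and the problem reduces to finding suitable subsets of a vector space under the point stabilizer.
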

\begin{proof}
This is \cite[Corollary 5.7(a)]{manz/wolf}.
\end{proof}

\section{Orbit theorems} \label{sec:gtheorem}
In this section, we prove the key orbit theorem of this paper. The proof relies on some of my previous work. \cite{YY2} and \cite{YY3} give a complete list of $e$ such that a solvable quasi-primitive group $G$ will have regular orbits on $V$. Based on this, we have the following.
\begin{theorem} \label{thm1}
Suppose that a finite solvable group $G$ acts faithfully, irreducibly and quasi-primitively on a finite vector space $V$. By Theorem ~\ref{Strofprimitive}, $G$ will have a uniquely determined normal subgroup $E$ which is a direct product of extra-special $p$-groups for various $p$ and $e=\sqrt{|E/\bZ(E)|}$. Assume $e=5,6,7$ or $e \geq 10$ and $e \neq 16$, then $G$ will have at least two regular orbits on $V$.
\end{theorem}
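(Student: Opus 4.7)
The strategy is a standard Brauer--Wielandt counting argument, strengthened to produce two regular orbits. Let $T \subseteq V$ be the set of vectors with nontrivial $G$-stabilizer. Every $v \in T \setminus \{0\}$ is fixed by some element of $G$ of prime order, and since the $p-1$ generators of a cyclic subgroup of order $p$ share a common fixed-point set,
\[|T| \;\leq\; 1 + \sum_{\langle g \rangle,\, o(g)\text{ prime}} (|C_V(g)| - 1).\]
If the right-hand side can be shown to be at most $|V| - 2|G|$, then at least $2|G|$ vectors lie in orbits of trivial stabilizer, yielding at least two regular orbits.

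To estimate the sum, I would partition prime-order cyclic subgroups according to the filtration $Z \leq U \leq F \leq A \leq G$ from Theorem~\ref{Strofprimitive}. On each piece, combine the centralizer bounds of Lemma~\ref{roughestimate} with the counting bounds of Lemmas~\ref{basiccounting} and~\ref{topcounting}. For $g \in F$ one has $|C_V(g)| \leq |W|^{eb/2}$, with the number of such elements controlled by $|F| = e^2 |U|$. For $g \in A \setminus F$ of prime order $s \geq 5$ coprime to $|E|$ one has $|C_V(g)| \leq |W|^{\lfloor e/3 \rfloor b}$. For $g \in G \setminus A$ of prime order $s$ one has $|C_V(g)| \leq |W|^{eb/s}$, with the count bounded via Lemma~\ref{topcounting} and Theorem~\ref{Strofprimitive}(9). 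Since Lemma~\ref{OrderG} bounds $|G|$ by $\dim(W) \cdot |A/F| \cdot e^2 \cdot (|W|-1)$ while $|V| = |W|^{eb}$ grows exponentially in $e$, the dominant contribution $|F| \cdot |W|^{eb/2}$ becomes negligible compared to $|V|$ once $e$ is large, so the desired inequality closes the argument in the asymptotic regime.

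The main obstacle is the numerical verification in the low and medium regimes $e \in \{5,6,7,10,11,12,13,14,15\}$. For these values $E/Z$ has small dimension over its base prime field (at most $8$ over $\FF_2$ or at most $4$ over $\FF_3$), so $A/F \lesssim \Sp(E/Z)$ lies in a small symplectic group and the refined estimates of Lemma~\ref{BdSCRSp} become available; I would use them to refine $|A/F|$, the number of prime-order $\pi_0$-elements, and the embedding into $\bF(G)$, then redo the counting in each case (with computer-algebra assistance in the spirit of~\cite{GAP} where the combinatorics is tight). The exclusion of $e = 16$ should reflect the extremal semi-linear situation $G \leq \Gamma(2^{16})$ over $\FF_2$: here the counting saturates and only one regular orbit can be guaranteed, consistent with the boundary case already visible in the one-regular-orbit lists of \cite{YY2} and \cite{YY3}.
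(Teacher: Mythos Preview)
The paper does not prove this theorem here at all: its proof consists solely of the citation ``This is \cite[Theorem 3.1]{YY2} and \cite[Theorem 3.1]{YY3}.'' So your proposal is not to be compared against an argument in this paper but against the content of those earlier papers, and your overall strategy (the Brauer--Wielandt counting inequality $|T| \leq |V| - 2|G|$, with prime-order elements stratified along $F \leq A \leq G$) is indeed the method used there.

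That said, two points in your sketch need correction. First, Theorem~\ref{thm1} asserts \emph{regular} orbits, so every prime-order element of $G$ must be accounted for, not just $\pi_0$-elements. The lemmas you invoke from this paper are deliberately specialized to the $\pi_0$ setting: Lemma~\ref{roughestimate}(2) only bounds $|\bC_V(g)|$ for $g \in A\setminus F$ of prime order $s \geq 5$ with $s \nmid |E|$, and Lemma~\ref{topcounting} only counts $\pi_0$-elements. You still need centralizer and counting estimates for elements of order $2$ and $3$ in $A\setminus F$ (and for primes dividing $|E|$), which are precisely the hardest contributions; those bounds are developed in \cite{YY2,YY3} but are not available from the lemmas in the present paper.

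Second, your explanation of the exceptional value $e=16$ is off. The case $e=16$ has nothing to do with a semi-linear embedding $G \leq \Gamma(2^{16})$; semi-linear groups have $e=1$. Rather, $e=16$ means $E$ is extraspecial of order $2^9$, so $A/F$ embeds in $\Sp(8,2)$, and certain solvable subgroups there (together with small $|W|$) make the counting inequality fail. The obstruction is on the $A/F$ side, not the $G/A$ side.
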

\begin{proof}
This is \cite[Theorem 3.1]{YY2} and \cite[Theorem 3.1]{YY3}.
\end{proof}

\begin{theorem}\label{quasiprimitivecase}
Let $G$ be a finite solvable group and let $V$ be a finite, faithful, irreducible and quasi-primitive $G$-module. Then there exists a normal subgroup $K \subseteq \bF_2(G)$ and there exist two $G$-orbits with representatives $v_a$, $v_b \in V$ such that for any $H \in \Hall_{\pi_0}(G)$, we have $\bC_H(v_a)$ and $\bC_H(v_b) \subseteq K$. The Hall $\pi_0$-subgroup of $K \bF(G)/\bF(G)$ and the Hall $\pi_0$-subgroup of $K \cap \bF(G)$ are abelian. Furthermore, either $\bC_{O_{\pi_0}(K \cap \bF(G))}(v_a)=1$ or  $\bC_{O_{\pi_0}(K \cap \bF(G))}(v_b)=1$.

\end{theorem}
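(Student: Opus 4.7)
The plan is to apply the structure theorem for quasi-primitive solvable linear groups (Theorem \ref{Strofprimitive}) and to split into cases according to the value of $e = \sqrt{|E/Z|}$. When $e \in \{5,6,7\}$ or $e \geq 10$ with $e \neq 16$, Theorem \ref{thm1} directly supplies two regular $G$-orbits on $V$; taking their representatives as $v_a, v_b$ and setting $K = 1$ satisfies every clause of the statement with no further work, as $O_{\pi_0}(K \cap \bF(G))$ is then trivial. The real work is therefore concentrated on the finite list $e \in \{1,2,3,4,8,9,16\}$.

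For each such $e$, I would further split on the decomposition $e = e_1 \cdots e_s$ into coprime prime-power factors $e_i = p_i^{n_i}$ and on the embedding $A/F \hookrightarrow \prod_i \Sp(2n_i,p_i)$ from Theorem \ref{Strofprimitive}(5). Lemma \ref{BdSCRSp} furnishes sharp bounds on $|A/F|$ and on the count of prime-order $\pi_0$-elements in each admissible symplectic group. Guided by these, I would take $K$ to be the normal subgroup of $G$ obtained from $ZU$ together with the preimage in $A$ of the largest normal $\pi_0'$-subgroup of $A/F$, truncated if necessary so that $K \subseteq \bF_2(G)$. The purpose of this choice is twofold: $K \cap \bF(G)$ sits in $ZU \cdot (\text{Hall }\pi_0'\text{-part of }F)$, whose Hall $\pi_0$-part is abelian; and $K\bF(G)/\bF(G)$ is a $\pi_0'$-extension inside the nilpotent quotient $\bF_2(G)/\bF(G)$, making its Hall $\pi_0$-subgroup trivially abelian.

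With $K$ fixed, the two vectors are produced by a counting argument. Partition the $\pi_0$-elements of prime order in $G \setminus K$ into those lying in $F \setminus K$, in $A \setminus F$, and in $G \setminus A$. Multiply the size bound supplied by Lemmas \ref{basiccounting} and \ref{topcounting} by the fixed-point bound from Lemma \ref{roughestimate} (namely $|W|^{eb/2}$, $|W|^{\lfloor e/3 \rfloor b}$, and $|W|^{eb/s}$ respectively), sum, and compare against $|V| = |W|^{eb}$. Whenever this inequality succeeds, at least two $G$-orbits consist entirely of vectors not centralized by any $\pi_0$-element outside $K$, and their representatives serve as $v_a, v_b$ with $\bC_H(v_a), \bC_H(v_b) \subseteq K$ for every $H \in \Hall_{\pi_0}(G)$.

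The main obstacle is the final clause, demanding that $\bC_{O_{\pi_0}(K \cap \bF(G))}(v_a) = 1$ or $\bC_{O_{\pi_0}(K \cap \bF(G))}(v_b) = 1$. Since $O_{\pi_0}(K \cap \bF(G))$ is abelian and contained in $\bF(G)$, Lemma \ref{roughestimate}(1) forces each of its non-identity elements to fix at most $|V|^{1/2}$ vectors, so a sharper second counting pass shows that vectors regular for $O_{\pi_0}(K \cap \bF(G))$ form the vast majority of $V$ in all but a handful of subcases; at least one of the two orbits produced above will then lie inside that regular set. The tightest subcases I expect are $e = 1$ with $|W|$ small and $e \in \{8,9,16\}$, where the symplectic groups in $A/F$ contribute many prime-order elements; in those I would invoke the sharp counts of Lemma \ref{BdSCRSp}(2)--(6), or fall back on the explicit semilinear description in Lemma \ref{OrderSpGamma}, to close the counting gap.
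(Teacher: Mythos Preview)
Your overall plan---reduce to $e\in\{1,2,3,4,8,9,16\}$ via Theorem~\ref{thm1} and then combine the structure of $A/F$ with a fixed-point count using Lemmas~\ref{roughestimate}, \ref{basiccounting}, \ref{topcounting}, \ref{BdSCRSp}---is exactly the paper's strategy. But two features of your execution diverge from the paper and the first of them is a genuine gap.

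\textbf{The choice of $K$ and the case $e=1$.} The paper does not fix one intermediate $K$ in advance. Instead it runs a dichotomy: either some prime $p\ge 5$ divides $|G/A|$ (or, for $e=9,16$, the relevant part of $|A/F|$), in which case the counting inequality produces two \emph{$\pi_0$-regular} orbits and one takes $K=1$; or no such prime exists, in which case $H\subseteq\bF_2(G)$ already, the Hall $\pi_0$-subgroups of $\bF_2(G)/\bF(G)$ and of $\bF(G)$ are abelian by the structure of $A/F$ (Lemma~\ref{BdSCRSp}) and of $U$, and one takes $K=\bF_2(G)$. Your uniform recipe gives $K=U$ when $e=1$ (there $A=F=U$, so the preimage of $O_{\pi_0'}(A/F)$ is just $F$), and then you would need two orbits avoiding every $\pi_0$-element of $G\setminus U$. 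This fails already for $G=\Gamma(2^5)$: the $31$ order-$5$ subgroups each fix one nonzero vector, and together they cover all of $V$. There is no counting gap to close here; the correct move is to enlarge $K$ to $G=\bF_2(G)$, which is what the paper does for $e=1$. Your reference to Lemma~\ref{OrderSpGamma} does not address this, since that lemma concerns solvable subgroups of $\Sp(2n,p)$ rather than the ambient semilinear group.

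\textbf{The final clause.} Your ``sharper second counting pass'' is unnecessary. For every $e$ on the list, $e$ is a $\{2,3\}$-number, so $E$ is a $\{2,3\}$-group and hence $O_{\pi_0}(\bF(G))\le U$. In the paper's dichotomy $K\cap\bF(G)$ is either trivial or equal to $\bF(G)$ (up to the index-$2$ discrepancy between $F$ and $\bF(G)$), so $O_{\pi_0}(K\cap\bF(G))\le U$ in every case. Since $U$ acts fixed-point freely on $W$ by Theorem~\ref{Strofprimitive}(6), $\bC_{O_{\pi_0}(K\cap\bF(G))}(v)=1$ for \emph{every} nonzero $v$, and the final clause is automatic. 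The partition ``$F\setminus K$, $A\setminus F$, $G\setminus A$'' you propose never actually contributes from its first piece for the same reason: there are no $\pi_0$-elements in $F$ outside $U$.
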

\begin{proof}
   We adopt the notation in Theorem ~\ref{Strofprimitive}.
   By Theorem \ref{thm1}, we may assume $e=1,2,3,4,8,9,16$. Since $e$ is not divisible by a prime $p \geq 5$, we may assume that $|G/A||A/F|$ is divisible by some prime $p \geq 5$. Otherwise the result is clear since all the elements of $\pi_0$-order that belong to $\bF(G)$ act fixed point freely on any nontrivial vectors of $V$.

   In order to show that $G$ has two $\pi_0$-regular orbits on $V$ it suffices to check that \[\left | \bigcup_{P \in \SP_{\pi_0}(G)}\bC_V(P) \right |+|G| < |V|.\] We will divide the set $\SP_{\pi_0}(G)$ into a union of sets $A_i$. Clearly $\left | \bigcup_{P \in \SP_{\pi_0}(G)}\bC_V(P) \right | \leq \sum_i \left | \bigcup_{P \in A_i} \bC_V(P) \right |$. We will find $\beta_i < e$ such that $|\bC_V(P)| \leq |W|^{\beta_i b}$ for all $P \in A_i$. We will find $a_i$ such that $|A_i| \leq a_i$. Since $|V|=|W|^{eb}$, it suffices to check that \[\sum_i {a_i \cdot |W|^{\beta_i b}/|W|^{eb}}+|G|/|W|^{eb}< 1. \] We call this inequality $\star$.\\

  Assume $e=16$. Define $A_1=\{ \langle x \rangle \ |\ x \in \EP_{\pi_0}(A \backslash F) \}$. Thus $|\bC_V(P)| \leq |W|^{5 b}$ for all $P \in A_1$ by Lemma ~\ref{roughestimate}(2) and we set $\beta_1=5$. $|A_1| \leq 24 \cdot 2^8 \cdot (|W|-1)/2/4 = a_1$ by Lemma ~\ref{basiccounting}(2) and Lemma ~\ref{BdSCRSp}(4). Define $A_2=\{ \langle x \rangle \ | \ x \in \EP_{\pi_0}(G \backslash A)\}$. Thus $|\bC_V(P)| \leq |W|^{8 b}$ for all $P \in A_2$ by Lemma ~\ref{roughestimate}(3) and we set $\beta_2=8$. $|A_2| \leq \lfloor \log_5(\dim(W)) \rfloor \cdot 24 \cdot 6^4 \cdot 2^8 \cdot (|W|-1) = a_2$ by Lemma ~\ref{topcounting}. $|G|\leq \dim(W) \cdot 24 \cdot 6^4 \cdot 2^8 \cdot (|W|-1)$. It is routine to check that $\star$ is satisfied.\\

  Assume $e=9$.

  Assume that $p \mid |G/A|$ for some $p \geq 5$. Thus $p \mid \dim(W)$. Since $p \mid \dim(W)$ and $3 \mid |W|-1$, $|W| \geq 4^p$. Define $A_1=\{ \langle x \rangle \ | \ x \in \EP_{\pi_0}(A \backslash F)\}$. Thus $|\bC_V(P)| \leq |W|^{3 b}$ by Lemma ~\ref{roughestimate}(2) for all $P \in A_1$ and we set $\beta_1=3$. $|A_1| \leq 64 \cdot 3^4 \cdot (|W|-1)/3/4 = a_1$ by Lemma ~\ref{basiccounting}(2) and Lemma ~\ref{BdSCRSp}(6). Define $A_2=\{ \langle x \rangle\ |\ x \in \EP_{\pi_0}(G \backslash A)\}$. Thus $|\bC_V(P)| \leq |W|^{\frac 9 5 b}$ for all $P \in A_2$ and we set $\beta_2=\frac 9 5$ by Lemma ~\ref{roughestimate}(3). $|A_2| \leq \lfloor \log_5(\dim(W))\rfloor \cdot 24^2 \cdot 2 \cdot 3^4 \cdot (|W|-1) = a_2$ by Lemma ~\ref{topcounting}. $|G| \leq \dim(W)\cdot 24^2 \cdot 2 \cdot 3^4 \cdot (|W|-1)$. It is routine to check that $\star$ is satisfied.

  Assume that $p \nmid |G/A|$ for any $p \geq 5$. Thus we may assume that $p \mid |A/F|$ for some $p \geq 5$. $|A/F| \leq 320$ by Lemma ~\ref{BdSCRSp}(6). Define $A_1=\{ \langle x \rangle\ |\ x \in \EP_{\pi_0}(A \backslash F)\}$. Thus $|\bC_V(P)| \leq |W|^{3 b}$ for all $P \in A_1$  by Lemma ~\ref{roughestimate}(2) and we set $\beta_1=3$. $|A_1| \leq 64 \cdot 3^4 \cdot (|W|-1)/3/4 = a_1$ by Lemma ~\ref{basiccounting}(2) and Lemma ~\ref{BdSCRSp}(6). $|G| \leq \dim(W) \cdot 320 \cdot 3^4 \cdot (|W|-1)$. It is routine to check that $\star$ is satisfied.\\

  Assume $e=8$. Since $(A/F)_{\pi_0}$ is abelian and $(A/F)_{\pi_0} \subseteq \bF(A/F)$, we may assume that $p \mid |G/A|$ for some $p \geq 5$. Thus $p \mid \dim(W)$. Since $p \mid \dim(W)$ and $2 \mid |W|-1$, $|W| \geq 3^p$. Define $A_1=\{ \langle x \rangle\ |\ x \in \EP_{\pi_0}(A \backslash F)\}$. Thus $|\bC_V(P)| \leq |W|^{2 b}$ for all $P \in A_1$ by Lemma ~\ref{roughestimate}(2) and we set $\beta_1=2$. $|A_1| \leq 6 \cdot 2^6 \cdot (|W|-1)/2/4 = a_1$ by Lemma ~\ref{basiccounting}(2) and Lemma ~\ref{BdSCRSp}(3). Define $A_2=\{ \langle x \rangle\ |\ x \in \EP_{\pi_0}(G \backslash A)\}$. Thus $|\bC_V(P)| \leq |W|^{\frac 8 5 b}$ for all $P \in A_2$ by Lemma ~\ref{roughestimate}(3) and we set $\beta_2=\frac 8 5$. $|A_2| \leq  \lfloor \log_5(\dim(W)) \rfloor  \cdot 6^4 \cdot 2^6 \cdot (|W|-1)  = a_2$ by Lemma ~\ref{topcounting}. $|G| \leq \dim W \cdot 6^4 \cdot 2^6 \cdot (|W|-1)$. It is routine to check that $\star$ is satisfied.\\

  Assume $e=4$. Since $(A/F)_{\pi_0}$ is abelian and $(A/F)_{\pi_0} \subseteq \bF(A/F)$, we may assume that $p \mid |G/A|$ for some $p \geq 5$. Thus $p \mid \dim(W)$. Since $p \mid \dim(W)$ and $2 \mid |W|-1$, $|W| \geq 3^p$. Define $A_1=\{ \langle x \rangle\ |\ x \in \EP_{\pi_0}(A \backslash F)\}$. $|\bC_V(P)| \leq |W|^{b}$ for all $P \in A_1$ by Lemma ~\ref{roughestimate}(2) and we set $\beta_1=1$. $|A_1| \leq 4 \cdot 2^4 \cdot (|W|-1)/2/4 = a_1$ by Lemma ~\ref{basiccounting}(2) and Lemma ~\ref{BdSCRSp}(2). Define $A_2=\{ \langle x \rangle\ |\ x \in \EP_{\pi_0}(G \backslash A)\}$. Thus $|\bC_V(P)| \leq |W|^{\frac 4 5 b}$ for all $P \in A_2$ by Lemma ~\ref{roughestimate}(3) and we set $\beta_2= \frac 4 5$. $|A_2| \leq \lfloor \log_5(\dim(W)) \rfloor \cdot 6^2 \cdot 2 \cdot 2^4 \cdot (|W|-1)= a_2$ by Lemma ~\ref{topcounting}. $|G| \leq \dim W \cdot 6^2 \cdot 2 \cdot 2^4 \cdot (|W|-1)$. It is routine to check that $\star$ is satisfied.\\

  Assume $e=3$. Since $(A/F)_{\pi_0}=1$, we may assume that $p \mid |G/A|$ for some $p \geq 5$. Thus $p \mid \dim(W)$. Since $p \mid \dim(W)$ and $3 \mid |W|-1$, $|W| \geq 4^p$. Define $A_1=\{ \langle x \rangle\ |\ x \in \EP_{\pi_0}(G \backslash A)\}$. Thus $|\bC_V(P)| \leq |W|^{\frac 3 5 b}$ for all $P \in A_1$ by Lemma ~\ref{roughestimate}(3) and we set $\beta_1=\frac 3 5$. $|A_1| \leq \lfloor \log_5(\dim(W)) \rfloor \cdot 24 \cdot 3^2 \cdot (|W|-1)=a_1$ by Lemma ~\ref{topcounting}. $|G| \leq \dim W \cdot 24 \cdot 9 \cdot (|W|-1)$. It is routine to check that $\star$ is satisfied.\\

  Assume $e=2$. Since $(A/F)_{\pi_0}=1$, we may assume that $p \mid |G/A|$ for some $p \geq 5$. Thus $p \mid \dim(W)$. Since $p \mid \dim(W)$ and $2 \mid |W|-1$, $|W| \geq 3^p$. Define $A_1=\{ \langle x \rangle\ |\ x \in \EP_{\pi_0}(G \backslash A)\}$. Thus $|\bC_V(P)| \leq |W|^{\frac 2 5 b}$ for all $P \in A_1$ by Lemma ~\ref{roughestimate}(3) and we set $\beta_1=\frac 2 5$. Since $\Aut(S_3) \cong S_3$ and $\Aut(Z_3) \cong Z_2$, all the elements of prime order $p \geq 5$ in $G \backslash A$ acts trivially on $A/F$. Since $G/A$ is cyclic, $\NSP_{\pi_0}(G/F) \leq \log_5(\dim(W))$. It is easy to see that $|A_1| \leq \lfloor \log_5(\dim(W)) \rfloor \cdot 2^2 \cdot (|W|-1)=a_1$. $|G| \leq \dim W \cdot 6 \cdot 4 \cdot (|W|-1)$. It is routine to check that $\star$ is satisfied.\\

  Assume $e=1$, we have $G \leq \Gamma(V)$. We know that $\fl(G) \leq 2$, $(\bF_2(G)/\bF(G))_{\pi_0}$ and $\bF(G)_{\pi_0}$ are abelian.
\end{proof}

We now restate Theorem A for convenience. This theorem may be viewed as the linear group analog of Lemma ~\ref{permutation23}.

\begin{thmA} 
  Let $G$ be a finite solvable group and let $V$ be a finite, faithful and completely reducible $G$-module(possibly of mixed characteristic). Then there exists $K \nor G$, $K \subseteq \bF_2(G)$ and there exist two $G$-orbits with representatives $v_a$, $v_b \in V$ such that for any $H \in \Hall_{\pi_0}(G)$, we have $\bC_H(v_a) \subseteq K$ and $\bC_H(v_b) \subseteq K$. The $\pi_0$-subgroup of $K \bF(G)/\bF(G)$ and the $\pi_0$-subgroup of $K \cap \bF(G)$ are abelian. Furthermore, $\bC_{O_{\pi_0}(K \cap \bF(G))}(v_a) \cap \bC_{O_{\pi_0}(K \cap \bF(G))}(v_b)=1$. 
\end{thmA}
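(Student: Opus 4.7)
The plan is to deduce Theorem~A from Theorem~\ref{quasiprimitivecase} through two successive Clifford-theoretic reductions, using induction on $|G|\cdot\dim_{\FF}(V)$. At the bottom of the induction, $V$ is irreducible and quasi-primitive and Theorem~\ref{quasiprimitivecase} applies directly; its ``either $\bC_{O_{\pi_0}(K\cap\bF(G))}(v_a)=1$ or $\bC_{O_{\pi_0}(K\cap\bF(G))}(v_b)=1$'' conclusion is strictly stronger than the ``intersection is trivial'' clause of Theorem~A, so the base case requires no extra work.

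The first reduction treats direct sums. If $V=V_1\oplus V_2$ with both $V_i$ proper nonzero $G$-submodules, set $C_i=\bC_G(V_i)$, so $C_1\cap C_2=1$ by faithfulness and each $V_i$ is a faithful completely reducible $G/C_i$-module. Apply the inductive hypothesis to $(G/C_i,V_i)$ to obtain $K_i/C_i\nor G/C_i$ inside $\bF_2(G/C_i)$ together with representatives $v_{a,i},v_{b,i}\in V_i$ satisfying the conclusion. I would set $v_a=v_{a,1}+v_{a,2}$, $v_b=v_{b,1}+v_{b,2}$, and take $K=K_1\cap K_2$ as a subgroup of $G$. The identity $\bC_G(v_\ast)=\bC_G(v_{\ast,1})\cap\bC_G(v_{\ast,2})$ for $\ast\in\{a,b\}$, combined with the fact that $HC_i/C_i\in\Hall_{\pi_0}(G/C_i)$ whenever $H\in\Hall_{\pi_0}(G)$, transports the centralizer bounds from each quotient back to $G$.

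The second reduction handles imprimitive irreducible modules. Suppose $V$ is irreducible but not quasi-primitive; pick $N\nor G$ maximal such that $V_N$ is inhomogeneous, and write $V_N=W_1\oplus\cdots\oplus W_t$ with $I=\Stab_G(W_1)$, so that $V\cong\mathrm{Ind}_I^G W_1$ and $|I|<|G|$. By maximality of $N$, the action of $I$ on $W_1$ modulo its kernel is faithful, irreducible and quasi-primitive, so induction (or Theorem~\ref{quasiprimitivecase} directly) produces $K_0\nor I$ in $\bF_2(I)$ and $w_a,w_b\in W_1$ with the required centralizer behaviour for $\Hall_{\pi_0}(I)$. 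Define $v_a=(w_a,0,\dots,0)$ and $v_b=(w_b,0,\dots,0)$ in the decomposition $V=W_1\oplus\cdots\oplus W_t$. Any $g\in G$ fixing $v_a$ must stabilize $W_1$ and hence lies in $I$; together with the fact that $H\cap I\in\Hall_{\pi_0}(I)$ for $H\in\Hall_{\pi_0}(G)$, this yields $\bC_H(v_a)=\bC_{H\cap I}(w_a)\subseteq K_0$, and similarly for $v_b$. Take $K$ to be the normal closure $\langle K_0^g:g\in G\rangle$; it lies in $\bF_2(G)$ since $\bF_2(I)\leq\bF_2(G)$ and $\bF_2$ is preserved by $G$-conjugation.

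The main obstacle is verifying that the abelianness of the Hall $\pi_0$-subgroups of $K\bF(G)/\bF(G)$ and of $K\cap\bF(G)$ survives both reductions. This is subtle because $\bF_2(G/C)$ may strictly contain $\bF_2(G)C/C$, so the Fitting series does not simply pull back through quotients: in Step~1 one must exploit the subdirect embedding $G\hookrightarrow G/C_1\times G/C_2$ to control the $\pi_0$-part of each projection separately, and in Step~2 one must check that the $G$-conjugates of the $\pi_0$-part of $K_0$ inside $\bF(I)$ commute and generate an abelian subgroup of $\bF(G)$. Once these Fitting-series bookkeeping issues are resolved, the intersection clause $\bC_{O_{\pi_0}(K\cap\bF(G))}(v_a)\cap\bC_{O_{\pi_0}(K\cap\bF(G))}(v_b)=1$ follows because the analogous clauses in each irreducible factor force at least one of the corresponding centralizers to be trivial in that factor, and triviality in any single factor kills the intersection in $V$.
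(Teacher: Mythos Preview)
Your direct–sum reduction (Step~1) and the quasi-primitive base case (Step~3) match the paper's argument, but the imprimitive reduction (Step~2) has a genuine gap, and it is not merely ``Fitting-series bookkeeping.''

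With $v_a=(w_a,0,\dots,0)$, every element of $C_1:=\bC_G(W_1)$ centralizes $v_a$: such an element fixes $W_1$ pointwise and sends the zero vector in each other block to the zero vector in some block. But $C_1$ is exactly the kernel of the map $I\to L_1$, so induction (or Theorem~\ref{quasiprimitivecase}) only gives you a normal subgroup $\bar K_0\subseteq\bF_2(L_1)$ of the \emph{quotient} $L_1=I/C_1$; its preimage $K_0$ in $I$ contains all of $C_1$. In a wreath product $G=L\wr S$ one has $C_1\cong L^{t-1}\rtimes\Stab_S(1)$, which is neither nilpotent nor contained in $\bF_2(I)$ in general, so your assertion ``$K_0\subseteq\bF_2(I)$'' already fails. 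Worse, the normal closure $\langle K_0^g:g\in G\rangle$ contains every $\Stab_S(i)$ and hence all of $S$, so your $K$ equals $G$. Concretely, take $S=\mathrm{AGL}(1,11)$ acting on $11$ points: then $\Stab_S(1)\cong Z_{10}$ contributes a $5$-element to $\bC_G(v_a)$ that does not lie in $\bF_2(G)$.

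The paper handles this by invoking Lemma~\ref{permutation23}: one chooses a subset $\Delta\subseteq\{1,\dots,t\}$ whose setwise stabilizer in $S=G/N$ is a $\{2,3\}$-group, and then builds $v_a,v_b$ with \emph{every} block coordinate nonzero, placing a conjugate of $w_a$ in blocks indexed by $\Delta$ and a conjugate of $w_b$ elsewhere (and swapping for $v_b$). Because $w_a$ and $w_b$ lie in different $L_1$-orbits, any $g$ centralizing $v_a$ must stabilize $\Delta$ setwise; hence a $\pi_0$-element of $\bC_G(v_a)$ is forced into $N=\bigcap_i N_G(W_i)$, and one can then take $K$ to be the intersection over all $i$ of the preimages of the $K_0$'s, which is normal in $G$ and sits inside $\bF_2(G)$. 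Your single-block construction cannot control the permutation part of the centralizer, and that is precisely what Lemma~\ref{permutation23} is for.
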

\begin{proof}
We consider minimal counterexample on $|G|+\dim V$.

Step 1. $V$ is an irreducible $G$-module. Assume not, we have $V=V_1 \oplus V_2$ and each $V_i$ is a non-trivial $G$-module. Let $C_i=\bC_G(V_i)$ and $V_i$ is a faithful $G/C_i$-module. Let $G_i=G/C_i$ and we know that $G \lesssim G_1 \times G_2$. There exists $v_{ia}$, $v_{ib} \in V_i$ where $v_{ia}$, $v_{ib}$ are in different $G_i$ orbits and $K_i \nor G_i$ such that for any $H_i \in \Hall_{\pi_0}(G_i)$, $\bC_{H_i}(v_{ia}) \subseteq K_i \subseteq \bF_2(G_i)$ and $\bC_{H_i}(v_{ib}) \subseteq K_i \subseteq \bF_2(G_i)$. Also the $\pi_0$-subgroup of $K_i \bF(G_i)/\bF(G_i)$ and the $\pi_0$-subgroup of $K_i \cap \bF(G_i)$ are abelian and $\bC_{H_i \cap \bF(G)}(v_{ia}) \cap \bC_{H_i \cap \bF(G)}(v_{ib})=1$.

Let $v_a=v_{1a}+v_{2a}$, $v_b=v_{1b}+v_{2b}$ and $K= G \cap (K_1 \times K_2)$. Let $H \in \Hall_{\pi_0}(G)$ and $H_i=HC_i/C_i$. $\bC_H(v_a) \subseteq \bC_{H_1}(v_{1a}) \times \bC_{H_2}(v_{2a}) \subseteq K_1 \times K_2$. $\bC_H(v_b) \subseteq \bC_{H_1}(v_{1b}) \times \bC_{H_2}(v_{2b}) \subseteq K_1 \times K_2$. Clearly $K \bF(G)/\bF(G) \subseteq K_1 \bF(G_1)/\bF(G_1) \times K_2 \bF(G_2)/\bF(G_2)$ and $K \cap \bF(G) \subseteq K_1 \cap \bF(G_1) \times K_2 \cap \bF(G_2)$.

$\bC_{O_{\pi_0}(K \cap \bF(G))}(v_{1a}+v_{2a}) \cap \bC_{O_{\pi_0}(K \cap \bF(G))}(v_{1b}+v_{2b}) \subseteq (\bC_{O_{\pi_0}(K_1 \cap \bF(G_1))}(v_{1a}) \cap \bC_{O_{\pi_0}(K_1 \cap \bF(G_1))}(v_{1b})) \times (\bC_{O_{\pi_0}(K_2 \cap \bF(G_2))}(v_{2a}) \cap \bC_{O_{\pi_0}(K_2 \cap \bF(G_2))}(v_{2b}))=1$.

Step 2. If $V$ is not quasi-primitive and there exists a normal subgroup $N$ of $G$ such that $V_N=V_1 \oplus \dots \oplus V_m$ for $m>1$ homogeneous components $V_i$ of $V_N$. If $N$ is maximal with this property, then $S=G/N$ primitively permutes the $V_i$. Also $V=V_1^G$, induced from $\bN_G(V_1)$. Let $L_1=\bN_G(V_1)/\bC_G(V_1)$, then $L_1$ acts faithfully and irreducibly on $V_1$ and $G$ is isomorphic to a subgroup of $L_1 \wr S$. Now, by induction, $L_1$ has at least two orbits of elements with representatives $v_1,u_1 \in V_1$ and $K_1 \nor L_1$ such that for $H_1 \in \Hall_{\pi_0}(\bC_{L_1}(v_1))$ or $H_1 \in \Hall_{\pi_0}(\bC_{L_1}(u_1))$ we have $H_1 \subseteq K_1 \subseteq \bF_2(L_1)$. Also the $\pi_0$-subgroup of $K_1 \bF(L_1)/\bF(L_1)$ and the $\pi_0$-subgroup of $K_1 \cap \bF(L_1)$ are abelian. Also $\bC_{O_{\pi_0}(K_1 \cap \bF(L_1))}(v_1) \cap \bC_{O_{\pi_0}(K_1 \cap \bF(L_1))}(u_1)=1$. By Lemma ~\ref{permutation23}, there exist two $G$-orbits with representatives $v_a$, $v_b \in V$ and $K \nor G$ such that for $H \in \Hall_{\pi_0}(\bC_G(v_a))$ or $H \in \Hall_{\pi_0}(\bC_G(v_b))$ we have $H \subseteq K \subseteq \bF_2(G)$. Also the $\pi_0$-subgroup of $K \bF(G)/\bF(G)$ and the $\pi_0$-subgroup of $K \cap \bF(G)$ are abelian. Furthermore, $\bC_{O_{\pi_0}(K \cap \bF(G))}(v_a) \cap \bC_{O_{\pi_0}(K \cap \bF(G))}(v_b)=1$.

Step 3. $V$ is quasi-primitive, the claim follows by Theorem ~\ref{quasiprimitivecase}. Final contradiction.
\end{proof}


\begin{theorem} \label{generalcasep}
  Let $G$ be a finite solvable group and let $V$ be a finite, faithful and completely reducible $G$-module(possibly of mixed characteristic). Let $p$ be a prime and $p \geq 5$. Then there exists $K \nor G$, $K \subseteq \bF_2(G)$ and there exists a $G$-orbit with representative $v \in V$ such that for any $P \in \Syl_{p}(G)$ we have $\bC_P(v) \subseteq K$. Also the $p$-subgroup of $K \bF(G)/\bF(G)$ and the $p$-subgroup of $K \cap \bF(G)$ are abelian. Furthermore, $|\bC_{O_p(K \cap \bF(G))}(v)| \leq |O_p(K \cap \bF(G))|^{1/2}$. 
\end{theorem}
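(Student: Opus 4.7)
The plan is to derive Theorem~\ref{generalcasep} as a direct corollary of Theorem~A. First, apply Theorem~A to obtain $K \nor G$ with $K \subseteq \bF_2(G)$ and two $G$-orbit representatives $v_a, v_b \in V$ such that $\bC_H(v_a), \bC_H(v_b) \subseteq K$ for every $H \in \Hall_{\pi_0}(G)$, with the $\pi_0$-parts of $K\bF(G)/\bF(G)$ and $K \cap \bF(G)$ abelian, and with the crucial clause $\bC_{O_{\pi_0}(K \cap \bF(G))}(v_a) \cap \bC_{O_{\pi_0}(K \cap \bF(G))}(v_b) = 1$.

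Since $p \geq 5$ we have $p \in \pi_0$, and since $G$ is solvable, Hall $\pi_0$-subgroups exist and every $p$-subgroup of $G$ is contained in one. Given $P \in \Syl_p(G)$, choose $H \in \Hall_{\pi_0}(G)$ with $P \subseteq H$; then $\bC_P(v) \subseteq \bC_H(v) \subseteq K$ for $v \in \{v_a, v_b\}$. The $p$-part of $K\bF(G)/\bF(G)$ and the $p$-part of $K \cap \bF(G)$ are subgroups of the corresponding $\pi_0$-parts, so they inherit abelianness. This already recovers every conclusion except the final halving estimate, and only requires choosing one of $v_a, v_b$ as $v$.

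To produce a single $v$ with $|\bC_{O_p(K \cap \bF(G))}(v)| \leq |O_p(K \cap \bF(G))|^{1/2}$, set $M := O_p(K \cap \bF(G))$. Since $M \leq O_{\pi_0}(K \cap \bF(G))$, the final clause of Theorem~A restricts to $\bC_M(v_a) \cap \bC_M(v_b) = 1$. The stabilizers $A := \bC_M(v_a)$ and $B := \bC_M(v_b)$ are subgroups of the $p$-group $M$, so by the usual product formula $|A|\,|B| = |AB|\,|A \cap B| \leq |M| \cdot 1 = |M|$, forcing $\min(|A|, |B|) \leq |M|^{1/2}$; take $v$ to be whichever of $v_a, v_b$ realizes this minimum. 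There is essentially no obstacle beyond this bookkeeping: the entire substance lies in Theorem~A, and the halving estimate emerges by a one-line pigeonhole argument from the product formula applied to the two centralizers that intersect trivially.
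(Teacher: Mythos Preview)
Your proposal is correct and follows essentially the same route as the paper's own proof: apply Theorem~A, embed each Sylow $p$-subgroup in a Hall $\pi_0$-subgroup to get $\bC_P(v_a),\bC_P(v_b)\subseteq K$ and the abelianness conclusions, then use the product formula on the two centralizers inside $O_p(K\cap\bF(G))$ (which intersect trivially) to force one of them to have order at most $|O_p(K\cap\bF(G))|^{1/2}$, and let $v$ be the corresponding $v_a$ or $v_b$.
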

\begin{proof}

By Theorem A, there exist two $G$-orbits with representatives $v_a$, $v_b \in V$ and $K \nor G$ such that for $H \in \Hall_{\pi_0}(G)$, we have $\bC_H(v_a)\subseteq K \subseteq \bF_2(G)$ and $\bC_H(v_b) \subseteq K \subseteq \bF_2(G)$. The $\pi_0$-subgroups of $K \bF(G)/\bF(G)$ and $K \cap \bF(G)$ are abelian. Furthermore $\bC_{O_{\pi_0}(K \cap \bF(G))}(v_a) \cap \bC_{O_{\pi_0}(K \cap \bF(G))}(v_b)=1$. Let $P \in \Syl_{p}(G)$, then $P \subseteq H \in \Hall_{\pi_0}(G)$ for some $H$. Thus $\bC_P(v_a) \subseteq \bC_H(v_a)\subseteq K \subseteq \bF_2(G)$ and $\bC_P(v_b) \subseteq \bC_H(v_b)\subseteq K \subseteq \bF_2(G)$. Also the $p$-subgroups of $K \bF(G)/\bF(G)$ and $K \cap \bF(G)$ are abelian.

Let $P_1=O_p(K \cap \bF(G))$, then $\bC_{P_1}(v_a) \cap \bC_{P_1}(v_b)=1$. Since $|\bC_{P_1}(v_a)| \cdot |\bC_{P_1}(v_b)| = \frac {|\bC_{P_1}(v_a)| \cdot |\bC_{P_1}(v_b)|} {|\bC_{P_1}(v_a) \cap \bC_{P_1}(v_b)|}=|\bC_{P_1}(v_a)  \bC_{P_1}(v_b)| \leq |P_1|$. It follows that, either  $|\bC_{P_1}(v_a)| \leq \sqrt{|P_1|}$ or $|\bC_{P_1}(v_b)| \leq \sqrt{|P_1|}$.
\end{proof}

\section{Blocks of small defect} \label{sec:Blocks}
Let $G$ be a finite group. Let $p$ be a prime and $|G|_p=p^n$. An irreducible ordinary character of $G$ is called $p$-defect $0$ if and only if its degree is divisible by $p^n$. By ~\cite[Theorem 4.18]{WF}, $G$ has a character of $p$-defect $0$ if and only if $G$ has a $p$-block of defect $0$. An important question in the modular representation theory of finite groups is to find the group-theoretic conditions for the existence of characters of $p$-defect $0$ in a finite group. It is an interesting problem to give necessary and sufficient conditions for the existence of $p$-blocks of defect zero. If a finite group $G$ has a character of $p$-defect $0$, then $O_p(G)=1$ ~\cite[Corollary 6.9]{WF}. Unfortunately, the converse is not true. Zhang ~\cite{Zhang} and Hiroshi ~\cite{Hiroshi1, Hiroshi2} provided various sufficient conditions where a finite group $G$ has a block of defect zero.




Although the block of defect zero may not exist in general, one could try to find the smallest defect $d(B)$ of a block $B$ of $G$. One of the results along this line is given by ~\cite[Theorem A]{AENA2}. In ~\cite{AENA2}, Espuelas and Navarro bounded the smallest defect $d(B)$ of a block $B$ of $G$ using the $p$-part of $G$. Using an orbit theorem ~\cite[Theorem 3.1]{AE1} of solvable linear groups of odd order, they showed the following result.  Let $G$ be a (solvable) group of odd order such that $O_p(G) = 1$ and $|G|_p = p^n$, then $G$ contains a $p$-block $B$ such that $d(B) \leq \lfloor n/2 \rfloor$. The bound is best possible, as shown by an example in ~\cite{AENA2}.

It is not true in general that there exists a block $B$ with $d(B) \leq \lfloor n/2 \rfloor$, as $G = A_7 (p = 2)$ shows us. However, the counterexamples where only found for $p=2$ and $p=3$. By work of Michler and Willems ~\cite{Michler,Willems} every simple group except possibly the alternating group has a block of defect zero for $p \geq 5$. The alternating group case was settled by Granville and Ono in ~\cite{GranvilleOno} using number theory. In fact, they proved the $t$-core partition conjecture and the most difficult case was handled by modular forms. Based on this, the following question raised by Espuelas and Navarro ~\cite{AENA2} seems to be natural. If $G$ is a finite group with $O_p(G)=1$, $p \geq 5$, and $|G|_p=p^n$, does $G$ contain a block of defect less then $\lfloor \frac n 2 \rfloor$?

In this section, we study this question and show that for solvable group $G$, $O_p(G)=1$ and $p \geq 5$, $G$ contains a block of defect less than or equal to $\lfloor \frac {3n} 5 \rfloor$. The proof relies on the previous orbit theorem (Theorem ~\ref{generalcasep}). The bound we obtain here is pretty sharp since $\lfloor \frac n 2 \rfloor$ is the best one may get. We restate Theorem B for convenience.

\begin{thmB} \label{sdefect}
Let $G$ be a finite solvable group such that $O_p(G) = 1$ for $p \geq 5$ and let $|G|_p = p^n$. Then $G$ contains a $p$-block $B$ such that $d(B) \leq \lfloor \frac {3n} {5} \rfloor $.
\end{thmB}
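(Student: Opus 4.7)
The plan is to argue by minimal counterexample on $|G|$. First I would reduce to the case $\Phi(G)=1$: since $O_p(G)=1$ forces $\Phi(G)$ to be a $p'$-group, a Schur--Zassenhaus/Frattini argument shows $O_p(G/\Phi(G))=1$ with $|G/\Phi(G)|_p=p^n$, and a block of $G/\Phi(G)$ inflates to a block of $G$ of the same defect (the defect group meets the $p'$-group $\Phi(G)$ trivially). With $\Phi(G)=1$, the Fitting subgroup $N:=\bF(G)$ is a direct product of elementary abelian $q$-groups for primes $q\neq p$, and $\bar G:=G/N$ acts faithfully and completely reducibly on the mixed-characteristic $\bar G$-module $V:=\Irr(N)$.

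Next, apply Theorem~\ref{generalcasep} to $\bar G$ on $V$ with the prime $p$: this produces $\lambda\in\Irr(N)$ together with a normal subgroup $\bar K\subseteq\bF_2(\bar G)=\bF_3(G)/N$ such that $\bC_P(\lambda)\subseteq\bar K$ for every $P\in\Syl_p(G)$ (identifying $\bC_P(\lambda)$ with its image modulo $N$, valid because $N$ is $p'$). Writing $\bar P_1:=O_p(\bar K\cap\bF(\bar G))$ and $\bar Q_1$ for the $p$-Sylow of $\bar K/(\bar K\cap\bF(\bar G))$, both are abelian, and the ``furthermore'' clause gives $|\bC_{\bar P_1}(\lambda)|\le|\bar P_1|^{1/2}$. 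This yields the key bound
\[
|\bC_P(\lambda)|\;\le\;|\bar P_1|^{1/2}\cdot|\bar Q_1|\;\le\;p^{m_1/2+m_2},
\]
where $m_1:=\log_p|\bF_2(G)/N|_p$ and $m_2:=\log_p|\bF_3(G)/\bF_2(G)|_p$.

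Set $T:=G_\lambda$ and invoke Fong--Reynolds: blocks of $G$ covering the (defect-zero) block of $N$ containing $\lambda$ correspond, with the same defect, to blocks of $T$ covering the same. So it suffices to produce a block of $T$ of defect at most $\lfloor 3n/5\rfloor$. Since $N$ is $p'$, $\bC_P(\lambda)$ is a Sylow $p$-subgroup of $T$, hence $|T|_p\le p^{m_1/2+m_2}$. If $T<G$ and $O_p(T)=1$, minimality supplies a block of $T$ of defect at most $\lfloor 3\log_p|T|_p/5\rfloor\le\lfloor 3n/5\rfloor$. If $O_p(T)\neq 1$, one quotients by $O_p(T)$, applies induction to $T/O_p(T)$, and inflates---the $|\bar P_1|^{1/2}$ bound is what constrains $|O_p(T)|$ and keeps the inflated defect within the target. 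If $T=G$, then $\lambda$ is $G$-invariant and $\bC_P(\lambda)=P$, so $n\le m_1/2+m_2$ combined with $n\ge m_1+m_2$ forces $m_1=0$ and $P$ abelian, permitting a separate inductive reduction passing to $G/N$.

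The main obstacle I expect is the interplay between the weak upper bound $|T|_p\le p^{m_1/2+m_2}$ and the actual $p$-valuation $n\ge m_1+m_2$. In Fitting height exactly three with $m_1\ll m_2$, the naive size bound $p^{m_1+m_2}$ already exceeds $p^{3n/5}$; it is precisely the square-root strengthening in the ``furthermore'' clause of Theorem~\ref{generalcasep}, together with a careful accounting of $|O_p(T)|$ in the inflation step, that gives the sharp $\lfloor 3n/5\rfloor$ bound.
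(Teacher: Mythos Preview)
Your Fong--Reynolds reduction has a gap that breaks the argument. After passing to $T=G_\lambda$, you apply the inductive hypothesis to $T$ (which does satisfy $O_p(T)=1$: since $N=\bF(G)\leq T$ is a normal $p'$-subgroup and $[O_p(T),N]\leq O_p(T)\cap N=1$, solvability gives $O_p(T)\leq\bC_G(\bF(G))\leq\bF(G)=N$, whence $O_p(T)=1$; your case split on $O_p(T)$ is therefore unnecessary). Induction then produces \emph{some} block of $T$ of defect at most $\lfloor 3\log_p|T|_p/5\rfloor$. But Fong--Reynolds only matches blocks of $G$ covering the block $\{\lambda\}$ of $N$ with blocks of $T$ covering $\{\lambda\}$. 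The block of $T$ handed to you by induction has no reason to cover $\lambda$; it covers some other $\mu\in\Irr(N)$, and since $T\neq G_\mu$ in general there is no defect-preserving transfer to $G$. Without the induction, the single block of $G$ lying over $\lambda$ only yields defect $\leq n_1/2+n_2$ (writing $|K\cap\bF(\bar G)|_p=p^{n_1}$, $|K\bF(\bar G):\bF(\bar G)|_p=p^{n_2}$, $|\bar G:K|_p=p^{n_3}$), and this is not bounded by $3n/5$ when $n_2$ dominates.

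The paper avoids induction beyond the Frattini step and instead constructs \emph{three} blocks of $G$ directly, each as the block lying over a well-chosen linear character of a normal $p'$-subgroup. The block over $\lambda$ gives defect $\leq n_1/2+n_2$. Because the Sylow $p$-subgroup of $K\bF(\bar G)/\bF(\bar G)$ is abelian (part of the conclusion of Theorem~\ref{generalcasep}), it has a regular orbit on $\Irr(O_{p'}(\bF(\bar G))/\Phi)$, and the resulting character $\mu$ yields a second block with defect $\leq n-n_2$. Likewise the abelian Sylow $p$-subgroup of $K\cap\bF(\bar G)$ has a regular orbit on $\Irr(\bF(G))$, producing a third block with defect $\leq n-n_1$. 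An elementary check shows $\min(n_1/2+n_2,\,n-n_2,\,n-n_1)\leq 3n/5$ whenever $n_1+n_2+n_3=n$, so one of the three blocks suffices. Your proposal records the abelianness of these two $p$-sections but never exploits it; that is precisely the missing ingredient.
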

\begin{proof}
Induction on $|G|$. Consider $\tilde G = G/\Phi(G)$. As $\bF(G/\Phi(G)) = \bF(G)/\Phi(G)$, we have that $O_p(\tilde G) = 1$ and $|\tilde G|_p = |G|_p$. If $\Phi(G) \neq 1$, then the result is true for $\tilde G$. Let $\tilde B$ be a $p$-block of $\tilde G$ such that $d(\tilde B) < \lfloor \frac {3n} 5 \rfloor$. By ~\cite[Lemma V.4.3]{WF}, there exists a $p$-block $B$ of $G$ such that $d(B) = d(\tilde B)$. Hence we may assume that $\Phi(G)= 1$.


Now, $V = \Irr(\bF(G))$ is a faithful and completely reducible $\bar G=G/\bF(G)$-module (over different fields, possibly). Put $V = V_1 \oplus \cdots \oplus V_t$, where each $V_i$ is an irreducible $\bar G$-module. Define $K_i = \bC_{\bar G}(V_i)$ and $G_i = \bar G/K_i$. By Theorem ~\ref{generalcasep}, there exists a normal subgroup $K \subseteq \bF_2(\bar G)$ and there exists a $\bar G$-orbit with representatives $\lambda \in V$ such that for any $P \in \Syl_{p}(\bar G)$ we have $\bC_P(\lambda) \subseteq K$. Also the $p$-subgroup of $K \bF(\bar G)/\bF(\bar G)$ and the $p$-subgroup of $K \cap \bF(\bar G)$ are abelian. Furthermore, $|\bC_{O_p(K \cap \bF(\bar G))}(\lambda)| \leq |O_p(K \cap \bF(\bar G))|^{1/2}$. Let $p^n=|\bar G|_p$, $p^{n_1}=|K \cap \bF(\bar G)|_p$, $p^{n_2}=|K \bF(\bar G) :\bF(\bar G)|_p$ and $p^{n_3}=|\bar G:K|_p$. Clearly $n=n_1+n_2+n_3$.

Take $\chi \in \Irr(G)$ lying over $\lambda$ and let $B$ be the $p$-block of $G$ containing $\chi$. As $\bF(G)$ is a $p'$-group, \cite[Lemma V.2.3]{WF} shows that every irreducible character $\psi$ in $B$ has $\lambda$ as an irreducible constituent. Now $\psi(1)_p \geq |\bar G: \bC_{\bar G}(\lambda)|_p  \cdot |K \cap \bF(\bar G)|_p^{1/2}\geq p^{n_3} \cdot p^{n_1/2}$ by Theorem ~\ref{generalcasep}.

Let $P/\bF(\bar G)$ be the Sylow $p$-subgroup of $K \bF(\bar G)/\bF(\bar G)$ and let $P$ be the preimage of it. Let $Y =O_{p'}(\bF(\bar G))$, observe that $W = \Irr(Y/\Phi(Y))$ is a faithful and completely reducible $P/\bF(\bar G)$-module. Since $P/\bF(\bar G)$ is abelian, there exists $\mu \in W$ such that $C_P(\mu) = \bF(\bar G)$. We may view $\mu$ as a character of the preimage $X$ of $Y$ in $G$. Observe that $X$ is a $p'$-group. Take $\xi \in \Irr(G)$ lying over $\mu$. Now $\xi$ lies over an irreducible character $\phi$ of $P$ lying over $\mu$. Clearly, $\phi(1)_p \geq |P/\bF(\bar G)|=p^{n_2}$. As $P$ is normal in $G$, we have $\xi(1)_p \geq \phi(1)_p \geq p^{n_2}$. Let $B$ be the $p$-block of $G$ containing $\xi$. As $X$ is a $p'$-group, \cite[Lemma V.2.3]{WF} shows that every irreducible character $\delta$ in $B$ has $\mu$ as an irreducible constituent and $\delta(1)_p \geq p^{n_2}$.


Let $P_1/\bF(G)$ be the Sylow $p$-subgroup of $K \cap \bF(\bar G)$ and let $P_1$ be the preimage of it. Since $P_1/\bF(G)$ is normal in $G/\bF(G)$, $V = \Irr(\bF(G))$ is a faithful and completely reducible $P_1/\bF(G)$-module. Since $P_1/\bF(G)$ is abelian, using a similarly argument as the previous paragraph, we may find a block $B$ such that every irreducible character $\varphi$ in $B$ satisfies $\varphi(1)_p \geq |K \cap \bF(\bar G)|_p=p^{n_1}$.

We know there is a block $B$ such that for every irreducible character $\alpha$ in $B$, $\alpha(1)_p \geq \max(p^{n_3} \cdot p^{n_1/2}, p^{n_2}, p^{n_1})$. It is not hard to see that $\alpha(1)_p \geq p^ {\frac {2n} 5} $ and thus $d(B) \leq \lfloor \frac {3n} {5} \rfloor$.
\end{proof}

Remark: Although the result for the solvable group case is satisfactory, the conjecture of Espuelas and Navarro for arbitrary finite groups is wide open.

\section{$p$ part of $|G:\bF(G)|$ and irreducible character degrees} \label{p part of G/F(G)}
If $P$ is a Sylow $p$-subgroup of a finite group $G$ it is reasonable to expect that the degrees of irreducible characters of $G$ somehow restrict those of $P$. Let $p^a$ denote the largest power of $p$ dividing $\chi(1)$ for an irreducible character $\chi$ of $G$ and $b(P)$ denote the largest degree of an irreducible character of $P$. Conjecture $4$ of Moret\'o ~\cite{Moret1} suggested $\log b(P)$ is bounded as a function of $a$. Moret\'o and Wolf ~\cite{MOWOLF} have proven this for $G$ solvable and even something a bit stronger, namely the logarithm to the base of $p$ of the $p$-part of $|G: \bF(G)|$ is bounded in terms of $a$. In fact, they showed that $|G:\bF(G)|_p \leq p^{19a}$. Moret\'o and Wolf ~\cite{MOWOLF} also proved that $|G: \bF(G)|_p \leq p^{2a}$ for odd order groups, this can also be deduced from ~\cite{AENA2}. This bound is best possible, as shown by an example in ~\cite{AENA2}. It is possible that $p^a < b(P)$ at least when $p=2$, as shown by an example of Isaacs ~\cite[Example 5.1]{Moret1}. 

Moret\'o and Wolf ~\cite{MOWOLF} suggested that a better bound $|G:\bF(G)|_p \leq p^{2a}$ might be true for all solvable groups. In fact, they believe ~\cite[Question 2.2]{MOWOLF} that for solvable groups one may find two irreducible characters $\chi_1$ and $\chi_2$ such that $|G: \bF(G)| \mid \chi_1(1) \chi_2(1)$. Although it is difficult to answer this question in general, we are able to prove a closely related result using the previous orbit theorem. As a corollary, we show that $|G:\bF(G)|_p \leq p^{3a}$ for $p \geq 5$.

The following result is closely related to ~\cite[Theorem A]{MOWOLF}.
\begin{theorem}\label{pipartofGFG}
If $G$ is solvable, there exists a product $\theta=\chi_1 \chi_2 \chi_3$ of distinct irreducible characters $\chi_1$, $\chi_2$ and $\chi_3$ such that $|G:\bF(G)|_{\pi_0}$ divides $\theta(1)$.
\end{theorem}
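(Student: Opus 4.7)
The plan is to reduce to $\Phi(G) = 1$ in the standard way (characters lift through the Frattini quotient and $\bF(G)/\Phi(G) = \bF(G/\Phi(G))$), so that $\bF(G)$ is elementary abelian and $V := \Irr(\bF(G))$ becomes a faithful, completely reducible module for $\bar G := G/\bF(G)$. I then apply Theorem~A to the pair $(\bar G, V)$: this produces a normal subgroup $K \subseteq \bF_2(\bar G)$ and orbit representatives $\lambda_a, \lambda_b \in V$ from two distinct $\bar G$-orbits such that the Hall $\pi_0$-subgroups of $\bC_{\bar G}(\lambda_a)$ and $\bC_{\bar G}(\lambda_b)$ are contained in $K$, the Hall $\pi_0$-subgroups $P_1 := O_{\pi_0}(K \cap \bF(\bar G))$ and $M := $ the Hall $\pi_0$-subgroup of $K\bF(\bar G)/\bF(\bar G)$ are abelian, and $\bC_{P_1}(\lambda_a) \cap \bC_{P_1}(\lambda_b) = 1$. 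Writing $m_3 := |\bar G:K|_{\pi_0}$, $m_2 := |M|$ and $m_1 := |P_1|$, we have $|G:\bF(G)|_{\pi_0} = m_1 m_2 m_3$.

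Next I would take $\chi_1 \in \Irr(G \mid \lambda_a)$ and $\chi_2 \in \Irr(G \mid \lambda_b)$; these are distinct since they lie over $\bar G$-inequivalent characters of $\bF(G)$. For each $i \in \{a,b\}$, the Hall $\pi_0$-subgroup of $\bC_{\bar G}(\lambda_i)$ lives inside the Hall $\pi_0$-subgroup of $K$, meets $P_1$ in $\bC_{P_1}(\lambda_i)$ of some order $c_i$, and projects to a subgroup of $M$ of some order $d_i$; hence $|\bC_{\bar G}(\lambda_i)|_{\pi_0} = c_i d_i$ and $\chi_i(1)_{\pi_0}$ is divisible by $m_1 m_2 m_3 / (c_i d_i)$. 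Multiplying, the $\pi_0$-part of $\chi_1(1)\chi_2(1)$ is divisible by $m_3^2 \cdot (m_1/c_a)(m_1/c_b) \cdot (m_2/d_a)(m_2/d_b)$; using $c_a c_b \mid m_1$ (forced by the abelianness of $P_1$ together with the trivial intersection) and $d_a, d_b \mid m_2$, this is in turn divisible by $m_3^2 \cdot m_1$.

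For the third character I would aim for $\chi_3 \in \Irr(G)$ with $\chi_3(1)_{\pi_0}$ divisible by $m_2$; the total product's $\pi_0$-part then becomes divisible by $m_3^2 m_1 m_2$, which is a multiple of $m_1 m_2 m_3 = |G:\bF(G)|_{\pi_0}$. Let $\tilde M \leq \bar G$ denote the preimage of $M$ in $\bar G$, so $\tilde M \supseteq \bF(\bar G)$ and $\tilde M/\bF(\bar G) \cong M$ is abelian. It suffices to find $\nu \in V$ with $\bC_{\tilde M}(\nu) \subseteq \bF(\bar G)$: any $\chi_3 \in \Irr(G \mid \nu)$ then has $\chi_3(1)_{\pi_0}$ divisible by $|\tilde M / \bF(\bar G)|_{\pi_0} = m_2$, and choosing $\nu$ in a $\bar G$-orbit disjoint from those of $\lambda_a$ and $\lambda_b$ makes $\chi_3$ distinct from $\chi_1, \chi_2$. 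The hard part will be producing such a $\nu$: in the proof of Theorem~B the analogous step exploits the hypothesis $O_p(G) = 1$ to pass to the coprime-action setting on $Y = O_{p'}(\bF(\bar G))$, where faithfulness of $M$ on $\Irr(Y/\Phi(Y))$ is automatic and a classical regular-orbit theorem for abelian groups on completely reducible modules delivers the desired vector. Here no such coprime reduction is available, so the regular-orbit argument must be executed directly for the coset-action of $M$ on the set of $\bF(\bar G)$-orbits of $V$, using the faithful action of $\bar G$ on $V$ to rule out nontrivial elements of $M$ that preserve every $\bF(\bar G)$-orbit; this is the principal technical obstacle, after which the distinctness and divisibility claims are immediate from the previous paragraph.
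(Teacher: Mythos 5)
Your reduction to $\Phi(G)=1$ and the two-character analysis are sound: applying Theorem~A to $\bar G=G/\bF(G)$ acting on $V=\Irr(\bF(G))$, and using the trivial-intersection condition $\bC_{P_1}(\lambda_a)\cap\bC_{P_1}(\lambda_b)=1$ together with the abelianness of $P_1$ to obtain $c_ac_b\mid m_1$, does show that $\bigl(\chi_1(1)\chi_2(1)\bigr)_{\pi_0}$ is divisible by $m_1m_3^2$. This is in fact a somewhat cleaner route to the $m_1$- and $m_3$-factors than the paper's, which manufactures them from separate characters $\chi$ and $\mu$ and then has to deal with the possibility $\chi=\mu$.

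The gap is in the third character, and it is real. You ask for $\nu\in V$ with $\bC_{\tilde M}(\nu)\subseteq\bF(\bar G)$, but $\tilde M$ is not abelian (it contains $\bF(\bar G)$), so no off-the-shelf regular-orbit theorem applies; and the fallback of finding a regular $M$-orbit on the \emph{set} of $\bF(\bar G)$-orbits of $V$ does not work, since an abelian group acting faithfully on a finite set need not have a regular orbit. Even if such a $\nu$ existed, nothing prevents it from lying in the $\bar G$-orbit of $\lambda_a$ or $\lambda_b$, so distinctness of $\chi_3$ is not ensured either. The step that actually works --- and it is the one the paper's terse proof is relying on, spelled out in the proof of Theorem~B --- is to look for $\chi_3$ with $\bF(G)\subseteq\Ker\chi_3$, that is, among the characters of $\bar G$. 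Then $W:=\Irr\bigl(\bF(\bar G)/\Phi(\bar G)\bigr)$ is a faithful completely reducible $\bar G/\bF(\bar G)$-module, hence (by Clifford) a faithful completely reducible module for the normal abelian subgroup $M$, and the regular-orbit theorem for abelian groups produces $\mu\in W$ with $\bC_M(\mu)=1$. Any $\chi_3\in\Irr(\bar G\mid\mu)$, inflated to $G$, has $\chi_3(1)_{\pi_0}$ divisible by $m_2$, and $\chi_3\ne\chi_1,\chi_2$ automatically because $\bF(G)\subseteq\Ker\chi_3$ while $\bF(G)\not\subseteq\Ker\chi_i$ for $i=1,2$. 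With that substitution your argument closes.
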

\begin{proof}
By Theorem A, we may choose $\chi \in \Irr(G)$ and $K \nor G$ such that $\bF(G)$ is not in $\Ker \chi$ and $|G: K|_{\pi_0}$ divides $\chi(1)$. We can choose $\phi \in \Irr(G)$ such that $\bF(G)$ is in $\Ker \phi$ and $|K \bF_2(G): \bF_2(G)|_{\pi_0}$ divides $\phi(1)$. We can choose  $\mu \in \Irr(G)$ such that $\bF(G)$ is not in $\Ker \mu$ and $|K \cap \bF(G): \bF(G)|_{\pi_0}$ divides $\mu(1)$. 

Then $\phi$ is distinct since $\bF(G)$ is in $\Ker \phi$ but not in $\Ker \chi$ and $\Ker \mu$. If $\mu$ is $\chi$, the product $\theta=\chi \phi$ satisfies the conclusion. Else $\theta=\chi \phi \mu$ does.
\end{proof}

The following result is closely related to ~\cite[Theorem A']{MOWOLF}.
\begin{theorem}\label{pipartofGFGconj}
If $G$ is solvable, there exist conjugacy classes $C_1$, $C_2$ and $C_3$ such that $|G:\bF(G)|_{\pi_0}$ divides $|C_1| |C_2| |C_3|$.
\end{theorem}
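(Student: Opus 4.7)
The plan is to mirror the proof of Theorem~\ref{pipartofGFG} by producing three conjugacy classes in place of three characters via Theorem A, using the correspondence $|v^G| = |G : \bC_G(v)| = |\bar G : \bC_{\bar G}(v)|$ for $v \in \bF(G)$, where $\bar G = G/\bF(G)$ and $\bF(G)$ is abelian. First I would reduce to the case $\Phi(G) = 1$: this preserves $|G:\bF(G)|_{\pi_0}$, and since conjugacy class sizes of $G/\Phi(G)$ divide those of $G$, it suffices to prove the claim for $G/\Phi(G)$. Then $\bF(G)$ is elementary abelian and $\bar G$ acts on it faithfully and completely reducibly. Applying Theorem A to this action yields $K \nor \bar G$ with $K \subseteq \bF_2(\bar G)$, two orbit representatives $v_a, v_b \in \bF(G)$ from distinct orbits, the abelianity of the Hall $\pi_0$-subgroups of $K \cap \bF(\bar G)$ and of $K\bF(\bar G)/\bF(\bar G)$, and the trivial intersection $\bC_{O_{\pi_0}(K \cap \bF(\bar G))}(v_a) \cap \bC_{O_{\pi_0}(K \cap \bF(\bar G))}(v_b) = 1$.

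Setting $C_1 = v_a^G$ and $C_2 = v_b^G$, a direct calculation using the containment of Hall $\pi_0$-centralizers in $K$, the abelianity of $P := O_{\pi_0}(K \cap \bF(\bar G))$, and the trivial-intersection condition (so $|\bC_P(v_a)|\cdot|\bC_P(v_b)| \le |P|$) gives, for each prime $p \in \pi_0$,
\[
v_p(|C_1|\cdot|C_2|) \;\geq\; 2\,v_p(|\bar G:K|) + v_p(|K \cap \bF(\bar G)|).
\]
This covers twice the ``top'' and once the ``bottom'' piece of the decomposition $|\bar G|_{\pi_0} = |\bar G:K|_{\pi_0} \cdot |K\bF(\bar G)/\bF(\bar G)|_{\pi_0} \cdot |K \cap \bF(\bar G)|_{\pi_0}$. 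For the ``middle'' piece, I would pass to $\bar{\bar G} = G/\bF_2(G)$ acting on the faithful completely reducible $\bar{\bar G}$-module $\bF(\bar G)/\Phi(\bF(\bar G))$ and apply Theorem A again: the resulting orbit representative $\bar y$ lifts to some $y \in \bF_2(G)$, and the containment $\bC_G(y)\bF(G)/\bF(G) \subseteq \bC_{\bar G}(\bar y)$ ensures that $|\bar y^{\bar G}|$ divides $|y^G|$, so the class $C_3 = y^G$ satisfies $v_p(|C_3|) \ge v_p(|K\bF(\bar G)/\bF(\bar G)|)$. Summing, $v_p(|C_1||C_2||C_3|) \ge v_p(|G:\bF(G)|) + v_p(|\bar G:K|) \ge v_p(|G:\bF(G)|)$ for each $p \in \pi_0$, which gives the claimed divisibility.

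The main obstacle is ensuring that the second application of Theorem A yields orbit divisibility specifically by the middle piece $|K\bF(\bar G)/\bF(\bar G)|_{\pi_0}$ coming from the first application, rather than by an unrelated normal subgroup produced by Theorem A at the $\bar{\bar G}$-level. My plan is to exploit the abelianity of $K\bF(\bar G)/\bF(\bar G)$ guaranteed by Theorem A at the first level: it is an abelian normal $\pi_0$-subgroup contained in $\bF(\bar{\bar G})$, and it acts faithfully on $\bF(\bar G)/\Phi(\bF(\bar G))$ since $\bC_{\bar{\bar G}}(\bF(\bar{\bar G})) \subseteq \bF(\bar{\bar G})$; by the same orbit-analytic techniques underlying Theorem~\ref{quasiprimitivecase}, an abelian subgroup with such a faithful action admits an orbit whose size is a multiple of its own order, furnishing the required $\bar y$.
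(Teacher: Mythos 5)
The paper gives no explicit proof of Theorem~\ref{pipartofGFGconj} (it is stated without argument, presumably because it is intended to be parallel to Theorem~\ref{pipartofGFG} and to the manipulations in the proof of Theorem~B), so there is nothing literal to compare against; but your proposal is essentially the argument the author has in mind, and it is correct in substance. The reduction to $\Phi(G)=1$, the identification of $V=\bF(G)$ as a faithful completely reducible $\bar G$-module, the use of the two orbit representatives and of the trivial-intersection condition $\bC_{O_{\pi_0}(K\cap\bF(\bar G))}(v_a)\cap\bC_{O_{\pi_0}(K\cap\bF(\bar G))}(v_b)=1$ to get $v_p(|C_1||C_2|)\ge 2v_p(|\bar G:K|)+v_p(|K\cap\bF(\bar G)|)$, and the idea of handling the middle piece via a regular orbit of the abelian normal $\pi_0$-subgroup $K\bF(\bar G)/\bF(\bar G)$ on a Fitting-factor module of $\bar G$, all check out; and you are right to abandon a second blind application of Theorem~A (which would hand you an unrelated normal subgroup) in favour of the elementary fact that a finite abelian group acting faithfully and completely reducibly on a finite module, even of mixed characteristic, has a regular orbit — this is exactly what the paper itself uses in the proof of Theorem~B (``Since $P/\bF(\bar G)$ is abelian, there exists $\mu\in W$ such that $\bC_P(\mu)=\bF(\bar G)$'').

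Two small technical points should be repaired. First, the module you name, $\bF(\bar G)/\Phi(\bF(\bar G))$, is a faithful $\bar{\bar G}$-module but is not in general completely reducible (the submodule $\Phi(\bar G)/\Phi(\bF(\bar G))$ can sit inside it non-split); the module you want is $\bF(\bar G)/\Phi(\bar G)=\bF(\bar G/\Phi(\bar G))$, which is faithful and completely reducible for $\bar{\bar G}$ by the usual $\Phi$-free structure theory, and faithfulness there follows from $\bC_{\bar G/\Phi(\bar G)}(\bF(\bar G/\Phi(\bar G)))\subseteq\bF(\bar G/\Phi(\bar G))$ rather than from the statement about $\bC_{\bar{\bar G}}(\bF(\bar{\bar G}))$ that you give (which is not the relevant self-centralizing property here). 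A regular $M_0$-orbit there pulls back to a vector of $\bF(\bar G)/\Phi(\bF(\bar G))$ with trivial $M_0$-stabilizer, so your conclusion is unaffected. Second, in the lifting step you should spell out that for $y\in\bF_2(G)$ lifting $\bar y$, one has $|\bar G:\bC_{\bar G}(\bar y)|_p\ge|\bar{\bar G}|_p/|\bC_{\bar{\bar G}}(\bar y)|_p\ge|M_0|_p$ because the image of $\bC_{\bar G}(\bar y)$ in $\bar{\bar G}$ stabilizes the image of $\bar y$ in $\bF(\bar G)/\Phi(\bF(\bar G))$; the containment $\bC_G(y)\bF(G)/\bF(G)\subseteq\bC_{\bar G}(\bar y)$ alone only compares $|y^G|$ with $|\bar y^{\bar G}|$ and does not by itself put $|M_0|$ into the picture.
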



The following result improves ~\cite[Corollary B]{MOWOLF} for $p \geq 5$.
\begin{cor}\label{chardegreebound}
Suppose that $p^{a+1}$ does not divide $\chi(1)$ for all $\chi \in \Irr(G)$ and let $P \in \Syl_p(G)$ where $p \geq 5$. If $G$ is solvable, then $|G: \bF(G)|_p\leq p^{3a}$, $b(P)\leq p^{4a}$ and $\dl(P) \leq \log_2 a + 7$.
\end{cor}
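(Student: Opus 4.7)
The plan is to deduce all three conclusions from Theorem~\ref{pipartofGFG}, largely following the template of \cite[Corollary B]{MOWOLF} but feeding in the sharper factorization by three characters rather than nineteen. The hypothesis $p \geq 5$ puts $p$ into $\pi_0$, which is precisely what makes Theorem~\ref{pipartofGFG} applicable; this is the only place where the prime restriction is used.

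First, for $|G:\bF(G)|_p \leq p^{3a}$: Theorem~\ref{pipartofGFG} produces three distinct irreducible characters $\chi_1,\chi_2,\chi_3$ with $|G:\bF(G)|_{\pi_0}$ dividing $\chi_1(1)\chi_2(1)\chi_3(1)$. Reading off the $p$-part and using $\chi_i(1)_p \leq p^a$ for each $i$ gives the bound immediately.

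Second, for $b(P) \leq p^{4a}$: set $Q = O_p(G) = P \cap \bF(G)$. Since $Q$ is a $p$-group, $b(Q) = p^k$ for some $k$. I would pick $\theta \in \Irr(Q)$ of degree $p^k$, take an irreducible constituent $\chi$ of $\theta^G$, and use Clifford's theorem in the form $\chi|_Q = e(\theta_1 + \cdots + \theta_t)$ to conclude $\theta(1) \mid \chi(1)$; then $p^k \leq \chi(1)_p \leq p^a$, so $b(Q) \leq p^a$. Applying the same Clifford-theoretic inequality inside $P$ (any irreducible constituent of $\chi|_Q$ for $\chi \in \Irr(P)$ satisfies $\chi(1)/\theta(1) \mid |P:Q|$) yields $b(P) \leq |P:Q| \cdot b(Q) = |G:\bF(G)|_p \cdot b(Q) \leq p^{3a} \cdot p^a = p^{4a}$.

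Third, for $\dl(P) \leq \log_2 a + 7$: I would write $\dl(P) \leq \dl(Q) + \dl(P/Q)$. For $\dl(Q)$, invoke an existing bound on the derived length of a $p$-group in terms of the logarithm of its largest character degree, applied with $b(Q) \leq p^a$; this yields $\dl(Q) = O(\log a)$. For $\dl(P/Q)$, observe that $P/Q$ embeds as a $p$-subgroup of the faithful completely reducible solvable linear group $G/\bF(G)$ acting on $\bF(G)/\Phi(\bF(G))$, and apply the standard logarithmic bound for the derived length of a solvable completely reducible linear group; since $|P/Q| \leq p^{3a}$, this also gives $\dl(P/Q) = O(\log a)$. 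The hard part will be pinning down the explicit constant $7$, which requires carefully adding the constants from these two external estimates and exploiting $p \geq 5$ to sharpen the linear-group contribution; this is exactly the step where the improved bound $|G:\bF(G)|_p \leq p^{3a}$ (rather than $p^{19a}$) does its work.
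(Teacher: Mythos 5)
Your first two parts are essentially the paper's argument: Theorem~\ref{pipartofGFG} gives $|G:\bF(G)|_p\leq p^{3a}$ by reading off $p$-parts, and $b(P)\leq |P:O_p(G)|\cdot b(O_p(G))\leq p^{3a}\cdot p^a$ via the usual Clifford divisibility for $O_p(G)\nor G$ and for $O_p(G)\nor P$.

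The third part has a genuine gap. Splitting $\dl(P)\leq\dl(Q)+\dl(P/Q)$ with $Q=O_p(G)$ cannot yield $\log_2 a+7$: each summand on its own already costs you a full $\log_2 a$ term (for $\dl(Q)$, any abelian-subgroup-plus-$p$-group bound applied with $b(Q)\leq p^a$ gives $\dl(Q)\leq\mathrm{const}+\log_2 a$; for $\dl(P/Q)$, a $p$-group of order $\leq p^{3a}$ has derived length at most $1+\log_2(3a)=\mathrm{const}+\log_2 a$), so the sum is $2\log_2 a+O(1)$, which is strictly weaker than the stated bound. Also, the detour through ``the standard logarithmic bound for the derived length of a solvable completely reducible linear group'' is a red herring --- that kind of result bounds derived length in terms of the \emph{dimension} of the module, not in terms of the order of a Sylow subgroup, so it does not produce a bound in $a$. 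Finally, the prime restriction $p\geq 5$ plays no role in the derived-length step; it is used only in obtaining $|G:\bF(G)|_p\leq p^{3a}$.

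The paper instead applies the abelian-subgroup machinery \emph{once}, to all of $P$, using the bound $b(P)\leq p^{4a}$ already established: by \cite[Theorem 12.26]{Isaacs/book}, $P$ has an abelian subgroup $B$ with $|P:B|\leq b(P)^4$; by Podoski--Szegedy \cite[Theorem 5.1]{Podoski}, $P$ then has a \emph{normal} abelian $A$ with $|P:A|\leq|P:B|^2\leq b(P)^8=p^{8s}$ where $b(P)=p^s$; by \cite[Satz III.2.12]{Huppert1}, $\dl(P/A)\leq 1+\log_2(8s)$, so $\dl(P)\leq 2+\log_2(8s)=5+\log_2 s\leq 5+\log_2(4a)=7+\log_2 a$. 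The crucial point is that treating $P$ as a whole incurs only a single $\log_2 a$ term, whereas your decomposition incurs two.
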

\begin{proof}
There exists a product $\theta=\chi_1 \chi_2 \chi_3$ of distinct irreducible characters $\chi_i$ such that $|G: \bF(G)|_{\pi_0}$ divides $\theta(1)$ by Theorem ~\ref{pipartofGFG} and so $|G:\bF(G)|_p \leq p^{3a}$. If $P \in \Syl_p(G)$, then $b(P) \leq |P: O_p(G)||b(O_p(G))|=|G:\bF(G)|_p |b(O_p(G))| \leq p^{3a}p^a=p^{4a}$.

Now, we want to prove the last part of the statement. By ~\cite[Theorem 12.26]{Isaacs/book} and the nilpotency of $P$, we have that $P$ has an abelian subgroup $B$ of index at most $b(P)^4$. By ~\cite[Theorem 5.1]{Podoski}, we deduce that $P$ has a normal abelian subgroup $A$ of index at most $|P:B|^2$. Thus, $|P:A| \leq |P:B| \leq b(P)^{8s}$, where $b(P)=p^s$. By ~\cite[Satz III.2.12]{Huppert1}, $\dl(P/A) \leq 1+\log_2(8s)$ and so $\dl(P) \leq 2+ \log_2(8s)=5+\log_2(s)$. Since $s$ is at most $4a$, the result follows.
\end{proof}

We now state the conjugacy analogs of Theorem ~\ref{chardegreebound}. Given a group $G$, we write $b^*(G)$ to denote the largest size of the conjugacy classes of $G$. The following result improves ~\cite[Corollary B']{MOWOLF} for $p \geq 5$.
\begin{cor}\label{conjugacybound}
Suppose that $p^{a+1}$ does not divide $|C|$ for all $C \in \cl(G)$ and let $P \in \Syl_p(G)$ where $p \geq 5$. If $G$ is solvable, then $|G: \bF(G)|_p \leq p^{3a}$, $b^*(P)\leq p^{4a}$ and $|P'| \leq p^{2a(4a+1)}$.
\end{cor}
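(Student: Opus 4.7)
The plan is to mimic the proof of Corollary~\ref{chardegreebound} step by step, substituting Theorem~\ref{pipartofGFGconj} for Theorem~\ref{pipartofGFG} and conjugacy-class sizes for irreducible character degrees.

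First, Theorem~\ref{pipartofGFGconj} yields conjugacy classes $C_1,C_2,C_3$ of $G$ such that $|G:\bF(G)|_{\pi_0}$ divides $|C_1|\,|C_2|\,|C_3|$. Since $p\ge 5$ we have $p\in\pi_0$, so taking $p$-parts and invoking the hypothesis $|C_i|_p\le p^a$ gives $|G:\bF(G)|_p\le p^{3a}$.

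Second, to bound $b^*(P)$ I will use the class-theoretic analog of $b(P)\le |P:O_p(G)|\cdot b(O_p(G))$. For any $y\in O_p(G)$ the $O_p(G)$-orbit $y^{O_p(G)}$ sits inside the $G$-orbit $y^G$ and has $p$-power size, hence $|y^{O_p(G)}|\le |y^G|_p\le p^a$, so $b^*(O_p(G))\le p^a$. Combined with $|P:O_p(G)|=|G:\bF(G)|_p\le p^{3a}$ this yields $b^*(P)\le p^{3a}\cdot p^a=p^{4a}$.

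Third, for $|P'|$ I will invoke a classical bound of Vaughan--Lee/Neumann type: in a $p$-group $P$ whose conjugacy-class sizes are all at most $p^s$, one has $|P'|\le p^{s(s+1)/2}$. Substituting $s=4a$ produces $|P'|\le p^{4a(4a+1)/2}=p^{2a(4a+1)}$.

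The step I expect to be the main obstacle is the class-theoretic analog of the Clifford-style factorisation in the middle step: for characters, $\phi(1)\le |P:N|\cdot b(N)$ follows cleanly from Clifford theory because $\phi(1)/\psi(1)$ divides $|T:N|$, but for classes one must bound $|O_p(G):\bC_{O_p(G)}(x)|=|[O_p(G),x]|$ by $b^*(O_p(G))$ even for $x\notin O_p(G)$; this reflects the outer action of $\langle x\rangle$ on $O_p(G)$ and typically requires a separate lemma or reference to absorb into the stated bound.
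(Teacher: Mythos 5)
Your first and third steps are correct and match the paper: Theorem~\ref{pipartofGFGconj} gives $|G:\bF(G)|_p\le p^{3a}$, and the Vaughan-Lee bound $|P'|\le p^{s(s+1)/2}$ with $s=4a$ gives $p^{2a(4a+1)}$, exactly the reference \cite[Theorem VIII.9.12]{Huppert2} used in the paper.

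The middle step has a genuine gap, which you yourself flag but do not close: the inequality $b^*(P)\le |P:O_p(G)|\cdot b^*(O_p(G))$ is simply false in general. For example, if $N=O_p(G)$ is abelian then $b^*(N)=1$, yet an element $x\in P\setminus N$ acting nontrivially on $N$ has $|N:\bC_N(x)|>1$, so the factor $|N:\bC_N(x)|$ in $|P:\bC_P(x)|\le |P:N|\cdot|N:\bC_N(x)|$ is not controlled by $b^*(N)$. Bounding $b^*(N)$ by $p^a$ is a detour that cannot be completed; it is not the class-theoretic analog of the character estimate $b(P)\le |P:N|\,b(N)$. The paper's fix bypasses $b^*(N)$ entirely: since $N=O_p(G)\nor G$, for every $x\in G$ one has $\bC_N(x)=N\cap\bC_G(x)$, hence $|N:\bC_N(x)|=|N\bC_G(x):\bC_G(x)|$ divides $|G:\bC_G(x)|$; because $N$ is a $p$-group, this index is a $p$-power, so $|N:\bC_N(x)|\le |G:\bC_G(x)|_p\le p^a$ directly from the hypothesis, for every $x\in P$, not just $x\in N$. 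Plugging this into $|P:\bC_P(x)|\le|P:N|\cdot|N:\bC_N(x)|\le p^{3a}\cdot p^a$ gives $b^*(P)\le p^{4a}$ cleanly. If you replace your appeal to $b^*(N)$ with this divisibility observation, your argument agrees with the paper's.
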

\begin{proof}
The first statement follows directly from Theorem ~\ref{pipartofGFGconj}. Write $N=O_p(G)$. It is clear that $|N: \bC_N(x)|$ divides $|G: \bC_G(x)|$ for all $x \in G$. Thus, if we take $x \in P$ we have that
\[|\cl_P(x)|= |P: \bC_P(x)| \leq |P:N||N: \bC_N(x)| \leq p^{3a} p^a=p^{4a}\]
 Finally, to obtain the bounds for the order of $P'$ is suffices to apply a theorem of Vaughan-Lee \cite[Theorem VIII.9.12]{Huppert2}.
\end{proof}

\section{Huppert $\rho-\sigma$ conjectures} \label{sec:Huppert}
In this section we discuss Huppert's $\rho-\sigma$ conjectures.

If $n$ is a positive integer, we denote by $\pi(n)$ the set of all prime divisors of $n$. Let $\chi$ be an irreducible complex character of a group $G$, we denote by $\pi(\chi)$ the set of all prime divisors of the degree $\chi(1)$ of $\chi$. We define \[\sigma(G)=\max \{|\pi(\chi)|: \chi \in \Irr(G) \}\ and \ \rho(G)= \bigcup_{\chi \in \Irr(G)} \pi(\chi). \]
Thus $\rho(G)$ are those primes that divide the degree of some irreducible character of $G$ and $\sigma(G)$ is the maximum number of primes dividing the degree of an irreducible character of $G$. By Ito's theorem, $\rho(G)$ is precisely the set of all primes $p$ such that $G$ does not have a normal abelian Sylow $p$-subgroup.

Similarly, if $g \in G$, we denote by $\pi(g^G)$ the set of all prime divisors of $|G : \bC_G(g)|$, the size of the conjugacy class of $g \in G$. We define \[\sigma^{*}(G)=\max \{|\pi(g^G)|: g \in G \} \ and \ \rho^{*}(G)= \bigcup_{g \in G} \pi(g^G). \]
Thus $\rho^{*}(G)$ is the set of all prime divisors of the sizes of conjugacy classes of G. It is an elementary fact that $\rho^{*}(G)=\pi(G/\bZ(G))$. $\sigma^{*}(G)$ is the maximum number of distinct primes dividing the order of some conjugacy class of $G$.

A lot of research has been made on character degrees of finite groups since the eighties due to the interest of B. Huppert. One of the main problems that Huppert raised was his well-known $\rho-\sigma$ conjectures. The Huppert's $\rho-\sigma$ conjectures state that there is an irreducible character $\chi$ of $G$ and a conjugacy class $C$ of $G$ such that the degree of $\chi$ and $|C|$ are each divisible by many primes. Huppert's $\rho-\sigma$ conjecture is a problem of central importance in group theory and character theory; many people are devoted to the study of this problem.

For the character theoretic $\rho-\sigma$ problem,  Huppert conjectured that $|\rho(G)|$ can be bounded in terms of $\sigma(G)$, and if $G$ is solvable, then even $|\rho(G)| \leq 2\sigma(G)$. In ~\cite{Moret2} Moreto proved that, for any group $G$, $|\rho(G)| \leq 4\sigma(G)^2+6.5\sigma(G)+13$. This bound was improved to $|\rho(G)| \leq 7\sigma(G)$ by Casolo and Dolfi ~\cite[Theorem 1]{CADO2}. Up to now the best known bound for $G$ solvable is $|\rho(G)| \leq 3\sigma(G)+2$ and even $|\rho(G)| \leq 3\sigma(G)$ for $|G|$ odd by Manz and Wolf ~\cite[Theorems 1.4 and 1.5]{MAWOLF2}.

For the conjugacy class $\rho^*-\sigma^*$ problem, Huppert also conjectured that $|\rho^{*}(G)| \leq 2\sigma^{*}(G)$ for $G$ solvable. Casolo ~\cite{CASOLO} showed that $|\rho^{*}(G)| \leq 2\sigma^{*}(G)$ for a very large family of groups. But Casolo and Dolfi ~\cite{CADO} disproved the obvious conjecture by constructing solvable groups $G_n$ for which $|\rho^{*}(G_n)|/\sigma^{*}(G_n) \rightarrow 3$ as $n \rightarrow \infty$. In ~\cite{Moret2} Moreto proved that, for any group $G$, $|\rho(G)| \leq 3\sigma^*(G)^2+7.5\sigma^*(G)+11$. This bound was improved to $|\rho(G)| \leq 7\sigma(G)$ by Casolo and Dolfi ~\cite[Theorem 2]{CADO2}. Up to now the best known bound for $G$ solvable is $|\rho^{*}(G)| \leq 4\sigma^{*}(G)$ by Zhang ~\cite{Zhang1}. 


Theorem A yields linear bounds for arbitrary solvable groups in both versions of the problem. Thus, it provides a unified approach to the character-theoretic and the conjugacy class version of the $\rho-\sigma$ conjectures. The following theorem is about the character-theoretic version of the $\rho-\sigma$ conjectures.

\begin{theorem}\label{halfprimes}
Suppose that $M$ is a normal elementary abelian subgroup of the solvable group $G$. Assume that $M=\bC_G(M)$ is a completely reducible $G$-module (possibly of mixed characteristic). Set $V= \Irr(M)$ and write $V= V_1 \oplus \cdots \oplus V_m$ for irreducible $G$-modules $V_i$. For each $i$, write $V=Y_i^G$ for primitive modules $Y_i$. Then there exists $\Irr(G)$ whose degree is divisible by at least half the primes of $\pi_0(G/M)$.
\end{theorem}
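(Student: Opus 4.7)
Set $\bar G = G/M$. Since $M = \bC_G(M)$, the module $V = \Irr(M)$ is faithful and completely reducible as a $\bar G$-module, so Theorem~A applies. It produces $K \triangleleft \bar G$ with $K \subseteq \bF_2(\bar G)$ and two orbit representatives $v_a, v_b \in V$ satisfying $\bC_H(v_a), \bC_H(v_b) \subseteq K$ for every $H \in \Hall_{\pi_0}(\bar G)$, with the $\pi_0$-parts of both $K \cap \bF(\bar G)$ and $K\bF(\bar G)/\bF(\bar G)$ abelian and with $\bC_{O_{\pi_0}(K \cap \bF(\bar G))}(v_a) \cap \bC_{O_{\pi_0}(K \cap \bF(\bar G))}(v_b) = 1$.

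Identify $v_a, v_b$ with linear characters $\lambda_a, \lambda_b \in \Irr(M)$ and use the Clifford correspondence to choose $\chi_a \in \Irr(G \mid \lambda_a)$ and $\chi_b \in \Irr(G \mid \lambda_b)$; then $|\bar G : \bar T_{\lambda_\bullet}|$ divides $\chi_\bullet(1)$, where $\bar T_{\lambda_\bullet}$ is the $\bar G$-inertia subgroup of $\lambda_\bullet$. I would partition $\pi_0(\bar G) = \pi_1 \sqcup \pi_2 \sqcup \pi_3$, where $\pi_1 = \pi_0(\bar G) \setminus \pi_0(K)$, $\pi_2 = \pi_0(K \cap \bF(\bar G))$, and $\pi_3 = \pi_0(K) \setminus \pi_2$. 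For $p \in \pi_1$ a Sylow $p$-subgroup of $\bar G$ intersects $K$ trivially, so $p$ divides both $\chi_a(1)$ and $\chi_b(1)$. For $p \in \pi_2$ the trivial-intersection clause forces one of $\bC_{O_p(K \cap \bF(\bar G))}(v_a), \bC_{O_p(K \cap \bF(\bar G))}(v_b)$ to be a proper subgroup of $O_p(K \cap \bF(\bar G))$, hence $p$ divides at least one of $\chi_a(1), \chi_b(1)$. Thus $\chi_a, \chi_b$ jointly cover every prime of $\pi_1 \cup \pi_2$, and one of them covers at least $|\pi_1| + \lceil |\pi_2|/2 \rceil$ of them.

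To handle $\pi_3$, I would mirror the auxiliary construction in the proof of Theorem~B. Since the $\pi_0$-Hall subgroup $Q/\bF(\bar G)$ of $K\bF(\bar G)/\bF(\bar G)$ is abelian, a standard abelian-action argument yields $\mu \in \Irr(\bF(\bar G)/\Phi(\bF(\bar G)))$ with $\bC_Q(\mu) = \bF(\bar G)$; lifting $\mu$ to its preimage $X \leq G$ and choosing $\xi \in \Irr(G \mid \mu)$ gives $\xi(1)$ divisible by $|Q : \bF(\bar G)|$, hence by every prime of $\pi_3$. The primitive decomposition $V_i = Y_i^G$ enters at this step to ensure faithfulness of the action of $Q/\bF(\bar G)$ on the relevant $\Irr$-module, so that a $\mu$ with the required stabilizer genuinely exists.

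The closing pigeonhole over $\{\chi_a, \chi_b, \xi\}$ is the delicate step. Naively, splitting the three pieces only yields one character hitting $\geq |\pi_0(G/M)|/3$ primes, so to reach the bound $\geq |\pi_0(G/M)|/2$ one must either beef up $\xi$ so that its degree is also divisible by primes of $\pi_1$ (by coupling $\mu$ with a character obtained from $\lambda_a$ or $\lambda_b$ through the primitive induction $V_i = Y_i^G$), or run the splitting summand by summand on $V = V_1 \oplus \cdots \oplus V_m$ and aggregate. This balancing is the main technical obstacle; the primitive structure hypothesis is what makes the requisite combination of characters available.
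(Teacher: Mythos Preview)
You have correctly identified the obstacle, and it is genuine: applying Theorem~A globally and then pigeonholing over three characters $\chi_a,\chi_b,\xi$ cannot reach the $1/2$ bound. The paper avoids this by \emph{not} invoking Theorem~A at all; it works directly at the quasi-primitive level, which is exactly why the primitive-induction hypothesis $V_i=Y_i^G$ appears in the statement rather than being, as you suggest, an auxiliary device for the $\xi$-step.

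Concretely, write $V=X_1\oplus\cdots\oplus X_n$ as the direct sum of the $G$-conjugates of the $Y_i$; let $N=\bigcap_j \bN_G(X_j)$ be the kernel of the permutation action and set $F/M=\bF(N/M)$. On each primitive piece $X_j$, Theorem~\ref{quasiprimitivecase} supplies the normal $\overline K_j$ and two orbit representatives, and the decisive extra input is Theorem~\ref{Strofprimitive}(6): in a quasi-primitive solvable linear group the cyclic normal subgroup $U$ acts fixed-point-freely, and in the cases that matter ($e$ a $\{2,3\}$-number; otherwise there are regular orbits) the whole $\pi_0$-part of the Fitting subgroup lies in $U$. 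Hence \emph{every} nonzero $\lambda_j\in X_j$ already has trivial centralizer in the Sylow $q$-subgroup of $\bF(\overline N_j)$ for all $q\geq 5$. Placing the two orbit representatives from Theorem~\ref{quasiprimitivecase} on $\Delta$ and its complement (with $\Delta$ from Lemma~\ref{permutation23}) assembles a single $\lambda=\prod_j\lambda_j$ such that any Sylow $q$-subgroup $Q$ of $G$ fixing $\lambda$ satisfies $Q\subseteq N$, $Q\subseteq K=\bigcap_j K_j$, and $q\nmid |F/M|$. Thus one character $\chi\in\Irr(G\mid\lambda)$ already has degree divisible by every prime in $\pi_0(G/K)\cup\pi_0(F/M)$, and a second character $\beta$ (built exactly as your $\xi$, from the abelian $\pi_0$-part of $KF/F$) covers $\pi_0(KF/F)$. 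These two sets exhaust $\pi_0(G/M)$, so one of $\chi,\beta$ hits at least half the primes.

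What you lose by passing through Theorem~A is precisely the fixed-point-freeness of the $\pi_0$-Fitting part at the primitive level: after the inductive gluing in the proof of Theorem~A, only the trivial-intersection clause survives (compare the ``either $\bC_{O_{\pi_0}(K\cap\bF(G))}(v_a)=1$ or $\ldots(v_b)=1$'' in Theorem~\ref{quasiprimitivecase} with the weaker intersection statement in Theorem~A). That degradation is exactly what forces your third character and leaves the argument stuck at $1/3$.
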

\begin{proof}

We may write each $V_i$ as a direct sum of the $G$-conjugates of $Y_i$, $i=1,\dots, m$. Consequently, $V=X_1 \oplus \cdots \oplus X_n$ for subspaces $X_i$ of $V$ permuted by $G$ (not necessarily transitively) with $\{Y_1,\dots, Y_m\} \subseteq \{X_1,\dots, X_n\}$. Furthermore, if $N_i= \bN_G(X_i)$, $C_i=\bC_G(X_i)$ and $F_i/C_i=\bF(N_i/C_i)$, then $X_i$ is a primitive, faithful $N_i/C_i$-module. We denote by $\overline{N}_i=N_i/C_i$.

Let $N = \bigcap_i N_i \nor G$ be the kernel of the permutation representation of $G$ on $\{X_1,\dots, X_n\}$. Since $\bigcap_i C_i=M$, we have $\bigcap_i F_i/M= \bF(N/M) \nor G/M$. Let $F=\bigcap_i F_i$, so that $F/M=\bF(N/M)$. 

By Lemma ~\ref{permutation23}, we may choose $\Delta \in \{X_1,\dots, X_n\}$ such that $\stab_G(\Delta)/N$ is a $\{2,3\}$-group. Furthermore, we can assume that $\Delta$ intersects each $G$-orbit non-trivially. Without loss of generality, $\Delta=\{X_1,\dots, X_l\}$ for some $\l \in \{1,\dots,n\}$.

Let $\Delta_{i1}=Y_i^G \cap \Delta$ and $\Delta_{i2}=Y_i^G \backslash \Delta_{i1}$ where $Y_i \in \{Y_1,\dots, Y_m\}$.

Thus, for $j \in \Delta_{i1}$, we may choose non-principle $\lambda_j=\lambda_{ia}^g \in X_j$ such that there exists a normal subgroup $\overline{K}_j \subseteq \bF_2(\overline{N}_j)$ and for any $H \in \Hall_{\pi_0}(\overline{N}_j)$, we have $\bC_H(\lambda_{j}) \subseteq \overline{K}_j$ by Theorem ~\ref{quasiprimitivecase}. The $\pi_0$-subgroups of $(\overline{K}_j  \cap \bF_2(\overline{N}_j))\bF(\overline{N}_j)/\bF(\overline{N}_j)$ are abelian. For $j \in \Delta_{i2}$, we may choose $\lambda_j=\lambda_{ib}^g \in X_j$ such that there exists a normal subgroup $\overline{K}_j \subseteq \bF_2(\overline{N}_j)$ and for any $H \in \Hall_{\pi_0}(\overline{N}_j)$, we have $\bC_H(\lambda_{j}) \subseteq \overline{K}_j$. The $\pi_0$-subgroups of $(\overline{K}_j  \cap \bF_2(\overline{N}_j))\bF(\overline{N}_j)/\bF(\overline{N}_j)$ are abelian. Here $\lambda_{ia}$ and $\lambda_{ib}$ belong to different $N_i$ orbits.

We define $\lambda=\lambda_1 \dots \lambda_n \in V$. 

Finally suppose that $Q \in \Syl_q(G)$ for a prime $q \geq 5$, and $Q$ centralizes $\lambda$. Thus $Q \subseteq \stab_G(\Delta)$. But $\stab_G(\Delta)/N$ is a $\{2,3\}$-group. Thus $Q \subseteq N$ and we know that $Q \subseteq K=\bigcap K_j \subseteq \bF_2(N/M)$ and the $\pi_0$-subgroup of $KF/F$ is abelian.

Since $\lambda_i$ is non-principle for $i= 1, \dots, l$, $\lambda_i$ is not centralized by a non-trivial Sylow $q$-subgroup of $F_i \cap N/C_i \cap N$ by Theorem ~\ref{Strofprimitive}(6). Since $Q \cap F_i \in \Syl_q(F_i \cap N)$, we have that $q \nmid |F_i \cap N/C_i \cap N|$ for $i=1,\dots,l$. By our choice of $\Delta$, each $F_j/C_j (j=1,\dots,n)$ is conjugate to some $F_i/C_i$ with $i \in \{1,\dots,l\}$. Hence \[q \nmid |F_j \cap N/C_j \cap N|\] for all $j=1,\dots,n$. Since $\bigcap_i C_i=M$ and $\bigcap_i(F_i \cap N)=F$, we have that $q \nmid |F/M|$. We have seen above that $Q \subseteq K$ and so $q \nmid |G/K|$, Thus $|G: \bC_G(\lambda)|$ is divisible by every prime $p \geq 5$ in $\pi_0(G/K) \cup \pi_0(F/M)$. 

Let $Z =N/M$, observe that $W = \Irr(\bF(Z)/\Phi(Z))$ is a faithful and completely reducible $Z/\bF(Z)$-module. Since the $\pi_0$-subgroup of $KF/F$ is abelian, there exits $\mu \in W$ such that $|KF/F|_{\pi_0} \mid \mu(1)$.

Now let
\[\beta \in \Irr(G|\mu)\ and\ \chi \in \Irr(G|\lambda). \]

By the last two paragraphs, $\beta(1)$ is divisible by every prime in $\pi_0(KF/F)$ and $\chi(1)$ is divisible by every prime in $\pi_0(G/K) \cup \pi_0(F/M)$. The conclusion of the lemma is met with $\theta = \beta$ or $\theta=\chi$.
\end{proof}

The following theorem obtain the known bound of the character version of the Huppert's $\rho-\sigma$ conjecture for $G$ solvable. The bound we obtain here is the same as what Manz and Wolf obtained in ~\cite[Theorems 1.4]{MAWOLF2}.
\begin{theorem}
Let $\rho(G)$ to be those primes that divide the degree of some irreducible character of $G$, i.e., $p \in \rho(G)$ if and only if $p$ divides $|G: \bF(G)|$ or $O_p(G)$ is non-abelian. Let $\sigma(G)$ denote the maximum number of primes dividing the degree of an irreducible character of $G$. If $G$ is solvable, then $|\rho(G)| \leq 3\sigma(G)+2$.
\end{theorem}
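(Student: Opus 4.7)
The plan is to split $\rho(G)$ into two pieces that can be bounded separately and absorb the primes $2,3$ into the additive constant. By Ito's theorem, $p \in \rho(G)$ if and only if $p \mid |G/\bF(G)|$ or $O_p(G)$ is non-abelian, so $\rho(G) \subseteq \rho_1 \cup \rho_2$ with $\rho_1 = \pi(G/\bF(G))$ and $\rho_2 = \{p : O_p(G)\text{ is non-abelian}\}$. I will show $|\rho_2| \leq \sigma(G)$ by a direct Clifford construction and $|\rho_1 \cap \pi_0| \leq 2\sigma(G)$ via Theorem~\ref{halfprimes} after a Frattini-free reduction; adding the at most two primes in $\{2,3\}\cap\rho(G)$ then gives $|\rho(G)| \leq 3\sigma(G)+2$.

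For $\rho_2$: since $\bF(G) = \prod_p O_p(G)$ is an internal direct product, for each $p \in \rho_2$ pick $\lambda_p \in \Irr(O_p(G))$ with $p \mid \lambda_p(1)$ and assemble them into a single character $\theta \in \Irr(\bF(G))$ (trivial on the remaining factors) whose degree is divisible by $\prod_{p \in \rho_2} p$. For any $\chi \in \Irr(G \mid \theta)$, the Clifford correspondence yields $\theta(1) \mid \chi(1)$, so $|\rho_2| \leq |\pi(\chi(1))| \leq \sigma(G)$.

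For $\rho_1$: pass to $\bar G = G/\Phi(G)$. Characters of $\bar G$ inflate to characters of $G$, giving $\sigma(\bar G) \leq \sigma(G)$, and $\rho_1 = \pi(\bar G/\bF(\bar G))$ since $\Phi(G) \leq \bF(G)$. Because $\bar G$ is Frattini-free and solvable, $\Phi(\bF(\bar G)) = 1$, so $M := \bF(\bar G)$ is a mixed-characteristic elementary abelian subgroup that is self-centralizing in $\bar G$, and $M$ decomposes as a direct product of minimal normal subgroups of $\bar G$, giving complete reducibility. Theorem~\ref{halfprimes} then applies to the pair $(\bar G, M)$ and produces $\chi \in \Irr(\bar G)$ whose degree is divisible by at least half of $\pi_0(\bar G/M) = \pi_0 \cap \rho_1$, whence $|\rho_1 \cap \pi_0| \leq 2\sigma(\bar G) \leq 2\sigma(G)$.

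The main obstacle is verifying the reduction used for $\rho_1$: one must check that $\bar G = G/\Phi(G)$ preserves $\rho_1$ exactly and that the resulting $M$ satisfies every hypothesis of Theorem~\ref{halfprimes}, in particular complete reducibility as a $\bar G$-module. The reason $\rho_2$ has to be handled by a separate Clifford argument rather than folded into the same application of Theorem~\ref{halfprimes} is that the Frattini reduction collapses every non-abelian $O_p(G)$ to a quotient of $O_p(G)/\Phi(O_p(G))$, which is abelian; thus any prime in $\rho_2 \setminus \rho_1$ disappears in $\bar G$ and must be recaptured by the direct construction in the second paragraph.
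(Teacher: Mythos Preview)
Your proof is correct and follows essentially the same approach as the paper's: split $\rho(G)$ into $\pi(G/\bF(G))$ and the primes with non-abelian $O_p(G)$, bound the latter by a single Clifford lift from $\bF(G)$, bound the $\pi_0$-part of the former via the Frattini reduction and Theorem~\ref{halfprimes}, and absorb $\{2,3\}$ into the constant. The only cosmetic difference is that the paper restricts its second set to primes with $O_r(G)\in\Syl_r(G)$, but since any $p$ with $O_p(G)$ non-abelian and $O_p(G)\notin\Syl_p(G)$ already lies in $\pi(G/\bF(G))$, this does not affect the count.
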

\begin{proof}
Let $\mathscr{R}=\{r$ prime $|\ O_r(G) \in \Syl_r(G)$ and $O_r(G)$ is non-abelian$\}$ and $F=\bF(G)$. Certainly $\rho(G) \subseteq \pi(G/F) \cup \mathscr{R}$ and by Ito's Theorem \cite[12.33]{Isaacs/book}, equality holds.

$\bF(G)/\Phi(G)$ is a faithful completely reducible $G/F$-module. Applying Theorem ~\ref{halfprimes} with $G/\Phi(G)$ and $\bF(G)/\Phi(G)$ in the role of $G$ and $M$, there exists $\chi \in \Irr(G)$ with $|\pi_0(\chi(1))| \geq |\pi_0(G/F)|/2$. Hence $\sigma (G) \geq |\pi_0(G/F)|/2$. Now $\prod_{r \in \mathscr{R}} O_r(G) \nor G$ and each $O_r(G)$ is non-abelian. Thus there exists $\eta \in \Irr(G)$ such that $\mathscr{R} \subseteq \pi(\eta(1)))$. Since $\sigma(G) \geq \max \{|\mathscr{R}|, |\pi_0(G/F)|/2 \}$ and since $\rho(G) = \pi(G/F) \cup \mathscr{R} \subseteq \pi_0(G/F) \cup \mathscr{R} \cup \{2, 3\}$, the result follows.
\end{proof}

Using the same argument, one may get a similar result for the conjugacy class version of the Huppert $\rho-\sigma$ conjectures (i.e. If $G$ is solvable, then $|\rho^*(G)| \leq 4\sigma^*(G)+2$).\\




\begin{thebibliography}{19}
\bibitem{CASOLO} {C. Casolo}, `Prime divisors of conjugacy class lengths in finite groups', {Rend. Mat. Acc. Lincei, ser} (9) Mat. Appl. 2 (2) (1991), 111-113.
\bibitem{CADO} {C. Casolo and S. Dolfi}, `Conjugacy class lengths of metanilpotent groups', {Rend. Sem. Mat. Univ. Padova} 96 (1996), 121-130.

\bibitem{CADO2} {C. Casolo and S. Dolfi}, `Prime divisors of irreducible character degrees and of conjugacy class sizes in finite groups', {J. Group Theory} 10 (2007), 571-583.




\bibitem{AE1} {A. Espuelas}, `Large character degree of groups of odd order', {Illinois J. Math} 35 (1991), 499-505.


\bibitem{AENA2} {A. Espuelas and G. Navarro}, `Blocks of small defect', {Proc. Amer. Math. Soc.} 114 (4) (1992), 881-885.

\bibitem{WF} {W. Feit}, The representation theory of finite groups, North-Holland, Amsterdam, New York, and Oxford, 1982.
\bibitem{GAP} GAP. The GAP Group, GAP-Groups, Algorithms, and Programming, Version 4.3, 2002.

\bibitem{GLUCKWOLF} {D. Gluck and T.R. Wolf}, `Brauer's height conjecture for $p$-solvable groups', {Trans. Amer. Math. Soc.} 282 (1) (1984), 137-152.

\bibitem{GOW} {R. Gow}, `On the number of characters in a $p$-block of a $p$-solvable group', {J. Algebra} 65 (1980), 421-426.

\bibitem{GranvilleOno} {A. Granville and K. Ono}, `Defect zero $p$-blocks for finite simple groups', {Trans. Amer. Math. Soc.} Vol.348, No.1 (1996), 331-347.

\bibitem{Hiroshi1} {F. Hiroshi}, `On the existence of $p$-blocks of defect $0$ in $p$-nilpotent groups', {J. Algebra} 222  (1999), 747-768.
\bibitem{Hiroshi2} {F. Hiroshi}, `On the existence of $p$-blocks of defect $0$ in solvable groups', {J. Algebra} 230 (2000), 676-682.

\bibitem {Huppert1} {B. Huppert}, Finite Groups I, Springer-Verlag, Berlin, 1967.

\bibitem {Huppert2} {B. Huppert and N. Blackburn}, Finite Groups II, Springer-Verlag, Berlin, 1982.

\bibitem{HUPPERTC} {B. Huppert}, Character Theory of Finite Groups, deGruyter, Berlin, 1998.







\bibitem{Isaacs/book} {I.M. Isaacs}, Character Theory of Finite Groups, Dover, New York, 1994.

\bibitem{Michler} {G. Michler}, `A finite simple group of Lie type has $p$-blocks with different defects, $p \neq 2$', J. Algebra 104 (1986), 220-230.

\bibitem{MAWOLF2} {O. Manz and T.R. Wolf}, `Arithmetically large orbits of solvable linear groups', {Illinois J. Math.} 37 (4) (1993), 652-665.


\bibitem{manz/wolf} {O. Manz, T.R. Wolf}, Representations of solvable groups, London Mathematical Society, Lecture Notes Series 185, Cambridge University Press, Cambridge, 1993.

\bibitem{Moret1} {A. Moret\'o}, `Characters of $p$-groups and Sylow $p$-subgroups', Groups St. Andrews 2001 in Oxford, Cambridge University Press, Cambridge, 412-421.

\bibitem{Moret2} {A. Moret\'o}, `A proof of Huppert's $\rho$-$\sigma$ conjectures for nonsolvable groups'. {Int. Math. Res. Not.} 54 (2005), 3375-3383.

\bibitem {MOWOLF} {A. Moret\'o and T.R. Wolf}, `Orbit sizes, character degrees and Sylow subgroups', {Advances in Mathematics} 184 (2004), 18-36.

\bibitem{Podoski} {K. Podoski and B. Szegedy}, `Bounds in groups with finite abelian coverings or with finite derived groups', {J. Group Theory} 5 (2002), 443-452.

\bibitem{Suprunenko} {D.A. Suprunenko}, Matrix Groups, {Volume 45, Translations of Mathematical Monographs}.

\bibitem{Turull} {A. Turull}, `Supersolvable automorphism groups of solvable groups', {Math. Z.} 183 (1983), 47-73.

\bibitem{Willems} {W. Willems}, `Blocks of defect zero in finite simple groups of Lie type', J. Algebra 113 (1988), 511-522.

\bibitem{YY1} {Y. Yang}, `Orbits of the actions of finite solvable groups', {J. Algebra} 321 (2009), 2012-2021.
\bibitem{YY2} {Y. Yang}, `Regular orbits of finite primitive solvable groups', {J. Algebra} 323 (2010), 2735-2755.
\bibitem{YY3} {Y. Yang}, `Regular orbits of finite primitive solvable groups, II', {J. Algebra} 341 (2011), 23-34.
\bibitem{YY4} {Y. Yang}, `Regular orbits of nilpotent subgroups of solvable linear groups', {J. Algebra} 325 (2011), 56-69.
\bibitem{Zhang} {J. Zhang}, `$p$-regular orbits and $p$-blocks of defect zero', {Comm. Algebra} {21} (1993), 299-307.
\bibitem{Zhang1}{J. Zhang}, `On the lengths of conjugacy classes', {Comm. Algebra} {26} (1998), 2395-2400.




\end{thebibliography}
\end{document}